\newif\ifpdf
\numberwithin{equation}{section}       
 \theoremstyle{plain}    
 \newtheorem{thm}{Theorem}[section]
 \numberwithin{equation}{section} 
 \numberwithin{figure}{section} 
 \theoremstyle{plain}
 \theoremstyle{plain}    
 \newtheorem{cor}[thm]{Corollary} 
 \theoremstyle{plain}    
 \newtheorem{prop}[thm]{Proposition} 
 \theoremstyle{plain}    
 \newtheorem{lem}[thm]{Lemma} 
 \theoremstyle{remark}
 \newtheorem{rem}[thm]{Remark}
 \theoremstyle{definition}
\newtheorem{exa}[thm]{Example}
\theoremstyle{plain}
\theoremstyle{definition}
\newtheorem{defi}[thm]{Definition}
\newtheorem*{ackn}{Acknowledgements}
\newcommand{\C}{{\mathbb{C}}}
\newcommand{\N}{{\mathbb{N}}}
\newcommand{\PP}{{\mathbb{P}}}
\newcommand{\Q}{{\mathbb{Q}}}
\newcommand{\R}{{\mathbb{R}}}
\newcommand{\ie}{i.e.~}
\newcommand{\psh}{{\mathrm{PSH}}}
\newcommand{\tX}{\widetilde{X}}
\renewcommand{\a}{\alpha}
\renewcommand{\b}{\beta}
\newcommand{\e}{\varepsilon}
\newcommand{\om}{\omega}
\newcommand{\f}{\varphi}
\newcommand{\p}{\psi}
\newcommand{\D}{\Delta}
\newcommand{\Amp}{\mathrm{Amp}\,}
\newcommand{\Ric}{\mathrm{Ric}}
\newcommand{\reg}{\mathrm{reg}}
\newcommand{\pddt}{\frac{\partial}{\partial t}}
\newcommand{\vol}{\operatorname{vol}}
\newcommand{\tr}{\operatorname{tr}}
\begin{document}

\setcounter{tocdepth}{1}

\title{Regularizing properties of the twisted K\"ahler-Ricci flow}

\date{\today}

\author{Vincent Guedj, Ahmed Zeriahi}

\address{Institut Universitaire de France  \& Institut Math\'ematiques de Toulouse, \\
Universit{\'e} Paul Sabatier\\ 31062 Toulouse cedex 09\\ France}

\email{vincent.guedj@math.univ-toulouse.fr}

\address{Institut de Math\'ematiques de Toulouse,   \\ Universit\'e Paul Sabatier \\
118 route de Narbonne \\
F-31062 Toulouse cedex 09\\}

\email{ahmed.zeriahi@math.univ-toulouse.fr}

\begin{abstract}  
Let $X$ be a compact K\"ahler manifold. We show that the K\"ahler-Ricci flow (as well as its twisted versions) can be run from an arbitrary positive closed current with zero Lelong numbers and immediately smoothes it.
\end{abstract} 

\maketitle

\tableofcontents

\newpage

\section*{Introduction}
  
Let $X$ be a compact K\"ahler manifold of complex dimension $n$ 
and $\a_0 \in H^{1,1}(X,\R)$  a K\"ahler class.
The purpose of this note is to show that the K\"ahler-Ricci flow
$$
\frac{\partial \omega_t}{\partial t}=-Ric(\omega_t)
$$
can be run from an initial data $T_0 \in \alpha_0$ which is an arbitrary positive closed current with zero Lelong numbers, i.e.
there is a family of K\"ahler forms $(\omega_t)_{t>0}$ solutions of the above equation,
such that $\omega_t \rightarrow T_0$ as $t \rightarrow 0^+$.

We shall actually consider slightly more general twisted K\"ahler-Ricci flows
$$
\frac{\partial \omega_t}{\partial t}=-Ric(\omega_t)+\eta,
$$
where $\eta$ is a fixed closed $(1,1)$-form. The case when $\eta=[D]$ is the current of integration along an hypersurface has become quite important recently in connection with the K\"ahler-Einstein problem on Fano manifolds (see e.g. \cite{Don,Ber,CDS,Tian12}), however we will restrict here to the case of a smooth form $\eta$.

\smallskip

It is standard \cite{Cao85, Tsu, TZha} that when $T_0$ is a K\"ahler form, such a flow admits a unique solution on a maximal interval of time $[0,T_{max}[$, where
$$
T_{max}:=\sup \left\{ t \geq 0 \, | \, t K_X+t \{\eta\} +\alpha_0   \text{ is nef}\right\}.
$$
Our main result is the following:

\medskip

\noindent {\bf Theorem A.}
{\it Let $T_0 \in \a_0$ be a positive current with zero Lelong numbers. There exists a unique 
maximal family
$(\omega_t)_{0<t<T_{max}}$ of K\"ahler forms such that
$$
\frac{\partial \omega_t}{\partial t}=-Ric(\omega_t)+\eta,
$$
and $\omega_t$ weakly converges towards $T_0$, as $t \searrow 0^+$.
}
\medskip

We shall prove this result by working at the level of potentials and establishing  smoothing properties
of more general parabolic complex  Monge-Amp\`ere  flows (see Theorem \ref{thm:main}).
We give precise information on the weak continuity at time zero, depending on the properties
of the initial potential (convergence in energy or in capacity).

 Smoothing properties of the K\"ahler-Ricci flow have been known and used for a long time (see e.g.
\cite{BM87,Tian97,PSSW08}). Attempts to run the K\"ahler-Ricci flow from a degenerate initial data have
motivated several recent works \cite{CD07,CT08,CTZ11,ST,SzTo}. The best results so far were obtained in 
\cite{ST}, where the authors succeeded in running the K\"ahler-Ricci flow from an initial
current $T_0$ with continuous potentials.

\medskip

 Starting from an initial data $T_0$ having positive Lelong numbers at some points is an interesting issue
and we shall discuss it on our way to proving the main result as well as in section \ref{sec:concluding},
where we show that the normalized K\"ahler-Ricci flow eventually smoothes out any arbitrary positive closed current on a manifold with non-positive first Chern class (see Theorem \ref{thm:cao}). This is
however not necessarily the case on a Fano manifold (see Example \ref{exa:pn}), although the smoothing property  could be useful in analyzing the long-term behavior of  the normalized K\"ahler-Ricci flow
on Fano manifolds (see Theorem \ref{thm:fano} and Remark \ref{rem:heuristic}).

\medskip

 We push  our analysis further in section \ref{sec:last} and show that one can start the twisted K\"ahler-Ricci flow from a positive current representing a nef class (Theorem \ref{thm:nef}), and then
treat the case of mildly singular varieties. This is a particularly important situation for applications, in connection with the Minimal Model Program. Our main result extends to this context as follows:

\medskip

\noindent {\bf Theorem B.}
{\it
Let $X$ be a projective complex variety with log terminal singularities. Let $T_0$
be a positive $(1,1)$-current with zero Lelong numbers representing a K\"ahler class 
$\a_0 \in H^{1,1}(X,\R)$. Then there exists a 
continuous family $(\om_t)_{t\in[0,T_{max}[}$ of positive $(1,1)$-currents such that
\begin{itemize} 
\item[(i)] $[\om_t]=\a_0-tc_1(X)$ in $H^{1,1}(X,\R)$; 
\item[(ii)] $\omega_t \rightarrow T_0$ as $t \rightarrow 0$;
\item[(iii)] $(\om_t)_{t\in(0,+\infty)}$ restricts to a smooth path of K\"ahler forms on $X_\reg$ satisfying
$$
\frac{\partial\om_t}{\partial t}=-\Ric(\om_t). 
$$
\end{itemize}
}
\medskip

\begin{ackn} 
This work is a natural follow-up to the lecture notes \cite{BG13} and we are grateful to 
S\'ebastien Boucksom for numerous discussions.
We also thank Eleonora DiNezza, Hoang Chinh Lu and Philippe Lauren\c cot for useful comments on
a preliminary draft.
\end{ackn}

\section{Strategy of the proof}

\subsection{Reduction to a scalar Monge-Amp\`ere flow}

Fix $\omega$ a reference K\"ahler form in the initial K\"ahler class $\alpha_0$.
Thus $T_0=\omega+dd^c \f_0$ for some $\omega$-plurisubharmonic function $\f_0$, by the 
$\partial\overline{\partial}$-lemma. We use here and in the sequel the standard normalization
$$
d=\partial+\overline{\partial}, \; d^c=\frac{1}{2 i \pi} (\partial-\overline{\partial}),
\text{ so that }
dd^c =\frac{i}{\pi} \partial\overline{\partial}.
$$
Set 
$$
\theta_t:=\omega+t\eta-t Ric(\omega)
$$
 and consider the parabolic scalar flow
$$
 \frac{\partial \f_t}{\partial t}=\log \left[ \frac{(\theta_t+dd^c \f_t)^n}{\omega^n} \right]
$$
with ${\f_t}_{|t=0}=\f_0$. Let ${\omega}_t$ denote the K\"ahler form 
${\omega}_t:=\theta_t+dd^c \f_t$ and observe that
$$
\frac{\partial {\omega}_t}{\partial t}=\eta-Ric(\omega)+dd^c \dot{\f_t}=-Ric({\omega}_t)+\eta,
$$
since $dd^c \log({\omega}_t^n/\omega^n)=-Ric({\omega}_t)+Ric(\omega)$. 

Conversely, one easily shows that if $\omega_t$ evolves along the twisted K\"ahler-Ricci flow, 
then $\omega_t=\theta_t+dd^c \f_t$ where $\f_t$ satisfies the above scalar parabolic flow, up to a 
time dependent additive constant. We shall normalize the latter to be zero.

\subsection{Approximation process}

Fix $\f_0 \in PSH(X,\omega)$ an arbitrary $\omega$-psh function.
We are going to approximate  $\f_0$ by a decreasing sequence
$\f_{0,j}$ of smooth strictly $\omega$-psh functions 
$\f_{0,j} \in PSH(X,\omega) \cap {\mathcal C}^{\infty}(X)$.
This is always possible thanks to a regularizing result of Demailly \cite{Dem92}.

We consider the corresponding solution $\f_{t,j} \in PSH(X,\theta_t) \cap {\mathcal C}^{\infty}(X) $ to
the above scalar parabolic flow. By the discussion above, these flows are well defined on 
$[0,T_{max}[ \times X$.

Our goal is then to establish various a priori estimates which will allow us
to pass to the limit as $j \rightarrow +\infty$. 
For example, when $\f_0$ is {\it bounded}  we are going to prove that for each $\e>0$ 
and $0<T<T_{max}$ fixed,
\begin{enumerate}
\item $(x,t,j) \mapsto \f_{t,j}(x)$ is uniformly bounded on $X \times [0,T] \times \N$;
\item  $(x,t,j) \mapsto \dot{\f}_{t,j}(x)$ is uniformly bounded on $X \times [\e,T] \times \N$;
\item $(x,t,j) \mapsto \Delta \f_{t,j}(x)$ is uniformly bounded on $X \times [\e,T] \times \N$;
\end{enumerate}
Here $\Delta$ denotes the Laplace operator with respect to a fixed metric, e.g. $\omega$.

Thanks to the complex parabolic Evans-Krylov theory and Schauder estimates (see \cite{ShW11} for a recent account in the K\"ahler-Ricci flow context), these bounds
allow to show that $\f_{t,j} \rightarrow \f_t$ in ${\mathcal C}^{\infty}(X \times ]0,T])$, as 
$j \rightarrow +\infty$. We'll then check that $\f_t \rightarrow \f_0$ as $t \rightarrow 0^+$. This is
obvious (by global continuity) if $\f_0$ is {\it continuous}, slightly more involved when 
$\f_0$ is less regular:
\begin{itemize}
\item when $\f_0$ is bounded, we show   that
$\f_t$ converges to $\f_0$ in capacity as $t \rightarrow 0$;
\item when  $\f_0$ has finite energy, we show that 
the approximants $\f_{t,j}$ have uniformly bounded energies, hence are relatively compact
in the finite energy class ${\mathcal E}^1(X,2\omega)$. The $\f_{t,j}$'s then form  a compact
family in ${\mathcal C}^{\infty}(X \times ]0,T])$ and converge in energy towards
$\f_0$ as $t \rightarrow 0$ and $j \rightarrow +\infty$;
\item for arbitrarily singular initial potential $\f_0 \in L^1(X)$ we use the 
convexity property of the mean value $t \mapsto V^{-1} \int_X \f_{t,j} \, d\mu$ to control 
$\sup_X \f_{t,j}$ from below and show continuity in the $L^1$-topology at time zero.
\end{itemize}

\subsection{Notations}

In the sequel we set $\chi=\eta-\Ric(\omega)$ so that
$$
\theta_t=\omega+t \chi.
$$
To simplify notations we always assume that for $0<t \leq T<T_{max}$, one has
$$
\frac{\omega}{2} \leq \theta_t \leq 2 \omega.
$$
Thus $\f_t \in PSH(X,2 \omega)$ for all $t$. We also set
$$
V:=\vol_{\omega}(X)=\int_X \omega^n=\a_0^n.
$$

We let $(CMAF)$ denote the scalar parabolic flow
$$
\hskip-4cm
(CMAF) \hskip3cm \frac{\partial \f_t}{\partial t}=\log \left[ \frac{(\theta_t+dd^c \f_t)^n}{\mu} \right]
$$
where $\mu=e^h \omega^n$ is a smooth positive measure and $h \in {\mathcal C}^{\infty}(X,\R)$
is normalized so that 
$$
V=\int_X e^h \, \omega^n.
$$

\medskip

The next four sections are devoted to proving the following:

\begin{thm} \label{thm:main}
Let $\f_0$ be an $\omega$-psh function with zero Lelong numbers. There exists a unique maximal family of
smooth strictly $\theta_t$-psh functions $(\f_t)$ such that
$$
 \frac{\partial \f_t}{\partial t}=\log \left[ \frac{(\theta_t+dd^c \f_t)^n}{\mu} \right]
$$
in $]0,T_{max}[ \times X$, with $\f_t \rightarrow \f_0$ in $L^1(X)$, as $t \searrow 0^+$.
Moreover 
\begin{itemize}
\item $\f_t$ converges in energy towards $\f_0$ if $\f_0 \in {\mathcal E}^1(X,\omega)$ has finite energy;
\item $\f_t$ is uniformly bounded and converges to $\f_0$ in capacity if $\f_0 \in L^\infty(X)$.
\end{itemize}
\end{thm}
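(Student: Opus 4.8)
The strategy is exactly the approximation scheme sketched in Section 1. Fix a reference K\"ahler form $\omega$ and, using Demailly's regularization, choose a decreasing sequence $\f_{0,j}\in \psh(X,\omega)\cap\mathcal C^\infty(X)$ of smooth strictly $\omega$-psh functions with $\f_{0,j}\searrow\f_0$. For each $j$ the flow $(CMAF)$ with initial data $\f_{0,j}$ admits a unique smooth solution $\f_{t,j}$ on $[0,T_{max}[\times X$ by the classical theory of \cite{Cao85,Tsu,TZha}, staying strictly $\theta_t$-psh (in fact $2\omega$-psh). The whole problem is to obtain $j$-uniform a priori estimates on $[\e,T]\times X$ for every $0<\e<T<T_{max}$, then pass to the limit.

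First I would establish the \emph{zeroth-order bound}: a uniform lower and upper bound for $\f_{t,j}$ on $[0,T]\times X$. The upper bound follows from the maximum principle applied to $\f_{t,j}-Ct$ for suitable $C$, comparing with a supersolution; the lower bound is the subtler one and is where zero Lelong numbers enters. The key tool is the Ko\l odziej-type $L^\infty$ estimate for complex Monge-Amp\`ere equations together with Demailly-Koll\'ar/Skoda integrability: zero Lelong numbers of $\f_0$ forces $e^{-\f_0}\in L^p$ for all $p$, which gives, after running a short-time barrier, a uniform bound from below. One constructs an explicit subsolution/barrier of the form $t\log t$-type together with $\f_{0,j}$ to bound $\f_{t,j}$ below independently of $j$. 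Next comes the bound on $\dot\f_{t,j}$: differentiating $(CMAF)$ in $t$ gives a heat-type equation for $\dot\f_{t,j}$, and Cao's trick (considering $(e^{\e}-1)\dot\f_{t,j}$ or $t\dot\f_{t,j}-\f_{t,j}+\text{const}$, using the concavity of $\log\det$) yields a bound $|\dot\f_{t,j}|\le C(\e)$ on $[\e,T]\times X$ in terms of the oscillation of $\f_{t,j}$, hence $j$-uniform. This in turn controls $(\theta_t+dd^c\f_{t,j})^n/\mu$ from above and below. The \emph{Laplacian estimate} then follows from the Aubin-Yau/Siu inequality: a Chern-Lu computation applied to $\log\tr_\omega\omega_{t,j}-A\f_{t,j}$ gives $\Delta\f_{t,j}\le C(\e)$ on $[\e,T]\times X$, again $j$-uniformly, using the already-established bounds on $\f_{t,j}$ and $\dot\f_{t,j}$ and the lower bound on the bisectional curvature of $\omega$.

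With these three estimates in hand, the complex parabolic Evans-Krylov theory and Schauder estimates (cf.\ \cite{ShW11}) bootstrap to $\mathcal C^\infty_{loc}$ bounds for $\f_{t,j}$ on $]0,T_{max}[\times X$ that are $j$-uniform on compact subsets; by Arzel\`a-Ascoli and a diagonal argument, along a subsequence $\f_{t,j}\to\f_t$ in $\mathcal C^\infty(X\times]0,T_{max}[)$, and $\f_t$ solves $(CMAF)$. Monotonicity in $j$ (the $\f_{t,j}$ decrease with $j$ by the parabolic comparison principle) shows the whole sequence converges and the limit is independent of the approximating sequence. It remains to identify the boundary value as $t\searrow0^+$. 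For $\f_0$ continuous this is immediate from global continuity of the $\f_{t,j}$ up to $t=0$; for $\f_0\in L^\infty$ one uses the zeroth-order estimate together with a stability estimate in capacity for Monge-Amp\`ere to upgrade $L^1$-convergence to convergence in capacity; for $\f_0\in\mathcal E^1$ one shows the energies $E(\f_{t,j})$ are uniformly bounded (monotonicity of energy along the flow plus an integration of $\dot\f$) so that the family is relatively compact in $\mathcal E^1(X,2\omega)$ and converges in energy; and for a general $\omega$-psh $\f_0\in L^1$ one exploits the convexity of $t\mapsto V^{-1}\int_X\f_{t,j}\,d\mu$ to control $\sup_X\f_{t,j}$ from below and deduce $L^1$-continuity at $0$. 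Uniqueness and maximality follow from the parabolic comparison principle applied to any two solutions.

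The main obstacle is the $j$-uniform \emph{lower bound} on $\f_{t,j}$, and more precisely making the dependence on $\f_0$ quantitative enough that zero Lelong numbers is exactly the hypothesis that makes it work: one needs the instantaneous $L^p$-integrability of $e^{-\f_{0,j}}$ with $p\to\infty$ controlled uniformly in $j$, fed into a Ko\l odziej-type estimate, to beat the logarithmic loss coming from the $\log$ on the right-hand side of $(CMAF)$. Once that estimate is in place, the $\dot\f$ and $\Delta\f$ estimates are fairly standard Cao-type maximum principle arguments, and the passage to the limit is routine.
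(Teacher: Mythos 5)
Your proposal is correct and follows essentially the same route as the paper: Demailly approximation plus comparison-principle monotonicity, the maximum-principle bound on $t\dot{\f}_t-(\f_t-\f_0)$, Skoda/Ko\l odziej to get $j$-uniform estimates at positive times from the zero Lelong number hypothesis, Siu-type Laplacian estimates, Evans--Krylov/Schauder, and the same three-case analysis of continuity at $t=0$ (capacity, energy, $L^1$) with uniqueness in the maximally stretched sense. The only ordering nuance is that in the paper the $j$-uniform zeroth-order control on $[\e,T]$ is \emph{derived from} the upper bound $\dot{\f}_t\le(-\f_0+C)/t+C$ (the barrier alone only gives a bound depending pointwise on $\f_0$), since that bound makes the time-$t$ densities uniformly $L^2$ and Ko\l odziej then bounds the oscillation independently of $\inf_X\f_{0,j}$, which in turn feeds the Song--Tian lower bound on $\dot{\f}_t$ via the semigroup property.
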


Since the weak convergence of positive currents is equivalent to the $L^1-$convergence of 
(normalized) potentials, this result clearly contains our Main Theorem as a particular case.

The uniqueness property has to be understood in the following weak sense: if $\p_t$ is another solution of the parabolic scalar flow which converges in $L^1$ to the same initial data $\f_0$, 
then $\p_t$ lies below $\f_t$, i.e. $\f_t$ is the envelope of such solutions
(see \cite{Top10,GT11} for the related notion of unique "maximally stretched" solution).

When the Monge-Amp\`ere measure $MA(\f_0)$ is absolutely continuous with respect to Lebesgue measure, with density $f_0 \in L^p$, $p>1$ and continuous initial potential $\f_0$, 
it has been shown in \cite{ST} that there is no other solution $\p_t$ (this follows easily in this case from the maximum principle).

We discuss in section \ref{sec:concluding} how one can try and run the (normalized or twisted) 
K\"ahler-Ricci flow from a positive current having positive Lelong numbers.

\section{Bounds on $\f_t$}

In this section we assume  that $\f_t$ satisfies $(CMAF)$ with an initial data $\f_0$, which
is a smooth strictly $\omega$-psh function.

\subsection{Maximum principle}

The following maximum principle is a  basic tool to establish upper and lower bounds in the sequel.

\begin{prop}\label{prop:max} 
Let $\Omega_t$ be a smooth family of K\"ahler metrics on $X$, and denote by $\D_t$ the Laplacian with respect to $\Omega_t$. 
Assume that $H\in C^\infty(X\times[0,T])$ satisfies
$$
\left(\pddt-\D_t\right)H \le 0
\; \; \text{ or } \; \; 
\dot{H_t} \le\log\left[\frac{\left(\Omega_t+dd^c H_t\right)^n}{\Omega_t^n}\right],
$$
 Then $\sup_X H_t \le\sup_X H_0$ forall $t\in[0,T]$. 

\smallskip

If we replace $\le$ with $\ge$ 
then $\inf_X H_t \ge \inf_X H_0$.
\end{prop}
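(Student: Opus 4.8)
The plan is to prove the two cases separately, each by a standard maximum principle argument adapted to the parabolic setting. Since $H\in C^\infty(X\times[0,T])$ and $X$ is compact, for each fixed $t$ the supremum $\sup_X H_t$ is attained; the function $m(t):=\sup_X H_t$ is Lipschitz in $t$ (being a sup of a smooth family over a compact set), hence differentiable for almost every $t$, and it suffices to show $m'(t)\le 0$ wherever the derivative exists — or, more cleanly, to use the standard trick of perturbing $H$ by $\e t$ to get a strict inequality and then letting $\e\searrow 0$.

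For the first, linear case: fix $\e>0$ and set $H^\e:=H-\e t$, so that $(\pddt-\D_t)H^\e\le -\e<0$. Suppose for contradiction that $\sup_X H^\e_t>\sup_X H^\e_0$ for some $t\in[0,T]$; then by continuity there is a first time $t_0>0$ and a point $x_0\in X$ with $H^\e_{t_0}(x_0)=\sup_{X\times[0,t_0]}H^\e$. At such a space-time maximum one has $\pddt H^\e(x_0,t_0)\ge 0$ (the time derivative from the left is $\ge 0$) and $dd^c H^\e_{t_0}(x_0)\le 0$, hence $\D_{t_0}H^\e_{t_0}(x_0)\le 0$. Therefore $(\pddt-\D_{t_0})H^\e(x_0,t_0)\ge 0$, contradicting $(\pddt-\D_t)H^\e\le-\e<0$. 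Thus $\sup_X H^\e_t\le\sup_X H^\e_0=\sup_X H_0$ for all $t$, and letting $\e\searrow 0$ gives $\sup_X H_t\le\sup_X H_0$.

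For the second, nonlinear case the argument is essentially the same, once we observe that the Monge–Amp\`ere operator is locally monotone at a maximum: again set $H^\e:=H-\e t$, so $\dot{H^\e_t}\le\log[(\Omega_t+dd^c H^\e_t)^n/\Omega_t^n]-\e$. At a would-be first space-time maximum $(x_0,t_0)$ with $t_0>0$ we have $\dot{H^\e}(x_0,t_0)\ge 0$ while $dd^c H^\e_{t_0}(x_0)\le 0$ forces $0<\Omega_{t_0}+dd^c H^\e_{t_0}(x_0)\le\Omega_{t_0}(x_0)$ (as Hermitian forms), hence $(\Omega_{t_0}+dd^c H^\e_{t_0})^n(x_0)\le\Omega_{t_0}^n(x_0)$ and so the right-hand side is $\le-\e<0$, again a contradiction. (Here one also notes that $\Omega_{t_0}+dd^c H^\e_{t_0}(x_0)$ is automatically positive semi-definite at a maximum because the flow keeps $\f_t$ strictly $\theta_t$-psh, or more simply because the logarithm on the right is only defined where this form is positive.) Letting $\e\searrow 0$ finishes this case.

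The statement with $\ge$ in place of $\le$, and the conclusion $\inf_X H_t\ge\inf_X H_0$, follows by applying the already-proven cases to $-H$, using that $(\pddt-\D_t)(-H)\le 0$ in the linear case and — in the nonlinear case — the elementary inequality $-\log[(\Omega_t+dd^c H_t)^n/\Omega_t^n]\le\log[(\Omega_t+dd^c(-H_t))^n/\Omega_t^n]$ valid at an interior maximum of $-H$, which again reduces to the monotonicity of $A\mapsto\det A$ on positive Hermitian matrices. The only genuine subtlety — hence the step I would be most careful about — is the justification at the space-time maximum: one must make sure the maximum over $X\times[0,t_0]$ is attained at a point with $t_0>0$ strictly (otherwise there is nothing to prove) and that the one-sided time derivative there is nonnegative; the $\e t$ perturbation is precisely what removes any ambiguity coming from the maximum lying on the time boundary or from $m(t)$ failing to be differentiable, so I would present the proof in that form rather than differentiating $\sup_X H_t$ directly.
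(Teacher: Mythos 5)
Your treatment of the first ($\le$) case is correct and is essentially the paper's own argument: perturb to $H-\e t$ to make the differential inequality strict, note that at a space-time maximum with $t_0>0$ one has $\pddt H\ge 0$ and $dd^c H_{t_0}\le 0$ (so that, in the Monge--Amp\`ere case, the right-hand side is $\le 0$ by monotonicity of the determinant), derive a contradiction, and let $\e\searrow 0$.

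The $\ge$ case, however, contains a genuine flaw in the nonlinear setting. You reduce it to the previous case by passing to $-H$, invoking the inequality $-\log\left[(\Omega_t+dd^c H_t)^n/\Omega_t^n\right]\le\log\left[(\Omega_t+dd^c(-H_t))^n/\Omega_t^n\right]$ at an interior maximum of $-H$. This inequality is false: at such a point $dd^c H_t\ge 0$, and if $\lambda_1,\dots,\lambda_n\ge 0$ denote the eigenvalues of $dd^c H_t$ with respect to $\Omega_t$, your claim reads $\sum_i\log(1-\lambda_i^2)\ge 0$, whereas in fact $\prod_i(1-\lambda_i^2)\le 1$ (and $\Omega_t+dd^c(-H_t)$ need not even be positive, so the right-hand side may be undefined). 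Monotonicity of $A\mapsto\det A$ gives the \emph{reverse} inequality, so $-H$ need not be a subsolution and the reduction breaks down. The fix is immediate and is what the paper intends by ``replace $\le$ with $\ge$'': argue the minimum principle directly and symmetrically. Perturb to $H+\e t$; at a space-time minimum $(x_0,t_0)$ with $t_0>0$ one has $dd^c H_{t_0}(x_0)\ge 0$, hence $\log\left[(\Omega_{t_0}+dd^c H_{t_0})^n/\Omega_{t_0}^n\right]\ge 0$ there, so the supersolution inequality forces $\pddt(H+\e t)\ge\e>0$ at $(x_0,t_0)$, contradicting the fact that the (left) time derivative at such a minimum is $\le 0$; hence $t_0=0$, and letting $\e\searrow 0$ concludes. (In the linear case your reduction to $-H$ is of course fine.)
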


We include a proof for the reader's convenience.

\begin{proof} 
Replacing $H$ with $H-\e t$ with $\e>0$,  we may assume in each case that the inequality is strict. 
By compactness $H$ achieves its supremum at some point $(x_0,t_0)\in X\times[0,T]$, and the strict differential inequality implies that $t_0$ is necessarily $0$, since otherwise we would have 
$\pddt H \ge 0$ and 
$dd^c H \le 0$ at $(x_0,t_0)$.
\end{proof}

\begin{cor} \label{cor:envelope}
Let $u_t$ (resp. $v_t$) be a subsolution (resp. a supersolution) of $(CMAF)$, i.e.
$u_0 \leq \f_0 \leq v_0$ with
$$
\dot{u_t} \le\log\left[\frac{\left(\theta_t+dd^c u_t\right)^n}{\mu}\right]
\; \;  \text{ while }  \; \;
\dot{v_t} \ge\log\left[\frac{\left(\theta_t+dd^c v_t\right)^n}{\mu}\right].
$$
Then $u_t \leq v_t$.
Thus if $\f_t,\p_t$ are solutions of $(CMAF)$ with initial data $\f_0,\p_0$, 
$$
\inf_X (\f_0-\p_0) \leq \f_t-\p_t \leq \sup_X (\f_0-\p_0).
$$
In particular if $\f_0 \leq \p_0$, then $\f_t \leq \p_t$ for all $t$.
\end{cor}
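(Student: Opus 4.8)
The statement to prove is the comparison principle for $(CMAF)$: if $u_t$ is a subsolution and $v_t$ a supersolution with $u_0 \le \f_0 \le v_0$, then $u_t \le v_t$ for all $t$; and consequently the two-sided bound $\inf_X(\f_0-\p_0) \le \f_t - \p_t \le \sup_X(\f_0-\p_0)$ for genuine solutions. My plan is to reduce everything to the maximum principle of Proposition \ref{prop:max} applied to the function $H_t := u_t - v_t$. The point is that $H$ satisfies a differential inequality of the second type appearing in Proposition \ref{prop:max}, namely $\dot H_t \le \log\big[(\Omega_t + dd^c H_t)^n/\Omega_t^n\big]$ for a suitable family of K\"ahler metrics $\Omega_t$.

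First I would write, using the subsolution inequality for $u$ and the supersolution inequality for $v$,
$$
\dot u_t - \dot v_t \;\le\; \log\!\left[\frac{(\theta_t + dd^c u_t)^n}{\mu}\right] - \log\!\left[\frac{(\theta_t + dd^c v_t)^n}{\mu}\right] \;=\; \log\!\left[\frac{(\theta_t + dd^c u_t)^n}{(\theta_t + dd^c v_t)^n}\right].
$$
Now set $\Omega_t := \theta_t + dd^c v_t$; since $v_t$ is a supersolution of $(CMAF)$ it is in particular strictly $\theta_t$-psh (the right-hand side of $(CMAF)$ only makes sense when $\theta_t + dd^c v_t > 0$), so $\Omega_t$ is a smooth family of K\"ahler metrics. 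Then $\theta_t + dd^c u_t = \Omega_t + dd^c(u_t - v_t) = \Omega_t + dd^c H_t$, so the inequality above reads exactly
$$
\dot H_t \;\le\; \log\!\left[\frac{(\Omega_t + dd^c H_t)^n}{\Omega_t^n}\right].
$$
Proposition \ref{prop:max} then gives $\sup_X H_t \le \sup_X H_0 = \sup_X(u_0 - v_0) \le 0$, i.e. $u_t \le v_t$. For the two-sided estimate on two solutions $\f_t, \p_t$: applying the above with $u_t = \p_t + \inf_X(\f_0 - \p_0)$ (a subsolution, since adding a constant does not affect the $(CMAF)$ inequality — the constant drops out after differentiating in $t$ and the Monge-Amp\`ere measure is unchanged) and $v_t = \f_t$ yields the lower bound; symmetrically, $u_t = \f_t$ and $v_t = \p_t + \sup_X(\f_0 - \p_0)$ yields the upper bound. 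The final assertion ($\f_0 \le \p_0 \Rightarrow \f_t \le \p_t$) is the special case $u = \f$, $v = \p$.

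The only subtle point — and the one I would be most careful about — is the technical hypothesis in Proposition \ref{prop:max}: it requires $H \in C^\infty(X \times [0,T])$ and $\Omega_t$ a \emph{smooth} family of K\"ahler metrics up to $t=0$. For genuine smooth solutions this is automatic, but for the full strength of the corollary with only sub/supersolutions one must either assume enough regularity on $u_t, v_t$ or, more robustly, run the argument on $[\e, T]$ for each $\e > 0$ using continuity at $t = 0$ to take $\e \searrow 0$. Since in the applications $u_t$ and $v_t$ will be smooth (they arise as the smooth approximants $\f_{t,j}$, or as perturbations thereof), I would simply note that in all cases of interest the regularity is present, and the maximum-principle argument goes through verbatim. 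The rest is the routine bookkeeping indicated above.
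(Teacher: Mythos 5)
Your argument is exactly the paper's: the paper proves this corollary in one line by setting $\Omega_t=\theta_t+dd^c v_t$ and $H=u_t-v_t$ and invoking Proposition \ref{prop:max}, which is precisely your computation, and your derivation of the two-sided bound by comparing with constant-shifted solutions is the intended routine consequence. Your remarks on the regularity/positivity implicitly required of $v_t$ are sensible but do not change the argument.
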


\begin{proof}
Apply  Proposition \ref{prop:max} by setting $\Omega_t=\theta_t+dd^c v_t$ and $H=u_t-v_t$.
\end{proof}

\subsection{Bounding $\f_t$ from above}

\begin{lem} \label{lem:majosup}
The function $t \mapsto \sup_X \f_t$ is quasi-decreasing, more precisely
$$
\f_t \leq \sup_X \f_0+ [ n \log 2-\inf_X h ] t.
$$
\end{lem}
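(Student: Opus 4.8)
The plan is to apply the maximum principle (Proposition \ref{prop:max}) to a suitably chosen auxiliary function built from $\f_t$. Since $\f_t$ solves $(CMAF)$, we have
$$
\dot{\f_t} = \log\left[\frac{(\theta_t + dd^c \f_t)^n}{\mu}\right] = \log\left[\frac{(\theta_t + dd^c \f_t)^n}{\omega^n}\right] - h.
$$
We want an upper bound, so we look for a function $H_t$ satisfying $\dot H_t \le \log\left[(\theta_t + dd^c H_t)^n / \theta_t^n\right]$ (the second form of hypothesis in Proposition \ref{prop:max}, with $\Omega_t = \theta_t$), so that $\sup_X H_t \le \sup_X H_0$.

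The key step is the substitution $H_t := \f_t - C t$ for a constant $C$ to be determined. Then $\dot H_t = \dot\f_t - C$ and $\theta_t + dd^c H_t = \theta_t + dd^c \f_t$, so
$$
\dot H_t = \log\left[\frac{(\theta_t + dd^c \f_t)^n}{\theta_t^n}\right] + \log\frac{\theta_t^n}{\omega^n} - h - C.
$$
Thus $\dot H_t \le \log\left[(\theta_t + dd^c H_t)^n / \theta_t^n\right]$ will hold provided $C \ge \log(\theta_t^n/\omega^n) - h$ pointwise for all $t \in [0,T]$. Using the standing normalization $\theta_t \le 2\omega$, we have $\theta_t^n \le 2^n \omega^n$, hence $\log(\theta_t^n/\omega^n) \le n\log 2$, so the choice $C := n\log 2 - \inf_X h$ works. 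Applying the maximum principle gives $\sup_X H_t \le \sup_X H_0 = \sup_X \f_0$, i.e. $\sup_X \f_t \le \sup_X \f_0 + (n\log 2 - \inf_X h)\,t$. Since this bounds $\f_t$ from above by a constant, and $\f_t \le \sup_X \f_t$, we obtain the stated inequality $\f_t \le \sup_X \f_0 + (n\log 2 - \inf_X h)\,t$.

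There is no real obstacle here: the only point requiring a little care is to make sure Proposition \ref{prop:max} is being applied with the correct reference metric family ($\Omega_t = \theta_t$, which are indeed K\"ahler on $[0,T]$ by the standing assumption $\frac{\omega}{2} \le \theta_t \le 2\omega$) and the correct normalized measure, and to keep track of the $h$-term coming from $\mu = e^h \omega^n$ versus $\omega^n$ in the denominators. One should also note that $\f_0$ is smooth here, so all quantities are genuinely $C^\infty$ on $X \times [0,T]$ and Proposition \ref{prop:max} applies directly; the extension to general $\f_0$ with zero Lelong numbers is handled later via the approximation process.
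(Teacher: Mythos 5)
Your proof is correct and is essentially the paper's argument: the paper simply observes that the spatially constant function $\p_t=\sup_X \f_0+[\,n\log 2-\inf_X h\,]t$ is a supersolution of $(CMAF)$ and invokes Corollary \ref{cor:envelope}, whose proof (with $\Omega_t=\theta_t+dd^c\p_t=\theta_t$ and $H=\f_t-\p_t$) reduces to exactly your direct application of Proposition \ref{prop:max} to $H_t=\f_t-(n\log 2-\inf_X h)t$. Both versions rest on the same two bounds, $\theta_t\le 2\omega$ and $h\ge\inf_X h$, so the difference is purely one of packaging.
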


\begin{proof}
Observe that $\p_t:=\sup_X \f_0+ [ n \log 2-\inf_X h ] t$ is a super-solution, i.e.
$$
\dot{\p_t} \geq \log \left[ \frac{(\theta_t+dd^c \p_t)^n}{\mu} \right]
$$
with $\p_0 \geq \f_0$ and
apply Corollary \ref{cor:envelope} to conclude.
\end{proof}

\begin{rem}
It might be useful to notice for other applications that one can get an upper-bound which is independent of $\inf_X h$. Indeed let $\p_0$ denote the solution of the elliptic problem
$$
(2 \omega+dd^c \p_0)^n=2^n \mu,
$$
normalized by $\sup_X (\f_0-\p_0)=0$. Observe that $\f_t$ is then a subsolution of the
corresponding parabolic problem
$$
\dot{\p}_t=\log\left[\frac{\left( 2 \omega+dd^c \p_t\right)^n}{\mu}\right],
\; \; {\p_t}_{|t=0}=\p_0,
$$
whose solution is $\p_t=\p_0+n t \log 2$. The comparison principle thus yields
$$
\f_t \leq \p_t=\p_0+n t \log 2.
$$
\end{rem}

An alternative observation which will reveal also useful is the following:

\begin{lem} \label{lem:convex}
The mean value $I(t)=\frac{1}{V} \int_X \f_t \, d\mu$ is quasi-decreasing, namely
$$
t \mapsto I(t)-t \log(2^n)
\text{ is non-increasing}.
$$

The function $I$ is moreover convex when $\chi \geq 0$.
\end{lem}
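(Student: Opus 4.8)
The plan is to differentiate $I$ directly, using that in this section $\f_0$ is smooth and strictly $\omega$-psh, so that the solution $\f_t$ of $(CMAF)$ is smooth and hence $I$ is a $C^\infty$ function of $t$, all the integrations by parts below being legitimate on the compact manifold $X$.

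First I would compute, using $(CMAF)$ together with the normalization $\int_X d\mu=V$,
\[
I'(t)=\frac1V\int_X\dot\f_t\,d\mu=\frac1V\int_X\log\!\left[\frac{(\theta_t+dd^c\f_t)^n}{\mu}\right]d\mu\le\log\!\left(\frac1V\int_X(\theta_t+dd^c\f_t)^n\right),
\]
where the inequality is Jensen's inequality applied to the concave function $\log$ and the probability measure $V^{-1}d\mu$. Since $\int_X(\theta_t+dd^c\f_t)^n=\int_X\theta_t^n$ by Stokes, and $\theta_t\le 2\omega$ forces $\theta_t^n\le 2^n\omega^n$ pointwise (both being non-negative $(1,1)$-forms), we get $\int_X\theta_t^n\le 2^nV$, hence $I'(t)\le n\log 2$. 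This is exactly the statement that $t\mapsto I(t)-t\log(2^n)$ has non-positive derivative.

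For the convexity assertion I would differentiate once more. Setting $\omega_t:=\theta_t+dd^c\f_t$ and using $\dot\theta_t=\chi$, the flow gives $\ddot\f_t=\partial_t\log(\omega_t^n/\mu)=\tr_{\omega_t}(\dot\omega_t)=\tr_{\omega_t}\chi+\tr_{\omega_t}(dd^c\dot\f_t)$, so that
\[
I''(t)=\frac1V\int_X\tr_{\omega_t}\chi\,d\mu+\frac1V\int_X\tr_{\omega_t}(dd^c\dot\f_t)\,d\mu.
\]
When $\chi\ge 0$ the first term is clearly $\ge 0$. For the second, the key point is that $(CMAF)$ itself reads $\mu=e^{-\dot\f_t}\,\omega_t^n$; combining this with $\tr_{\omega_t}(dd^c\dot\f_t)\,\omega_t^n=n\,dd^c\dot\f_t\wedge\omega_t^{n-1}$ and integrating by parts against the closed form $\omega_t^{n-1}$ yields
\[
\int_X\tr_{\omega_t}(dd^c\dot\f_t)\,d\mu=n\int_X e^{-\dot\f_t}\,dd^c\dot\f_t\wedge\omega_t^{n-1}=n\int_X e^{-\dot\f_t}\,d\dot\f_t\wedge d^c\dot\f_t\wedge\omega_t^{n-1}\ge 0,
\]
since $d\dot\f_t\wedge d^c\dot\f_t=\tfrac{i}{\pi}\partial\dot\f_t\wedge\dbar\dot\f_t$ is a non-negative $(1,1)$-form and $\omega_t$ is Kähler. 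Hence $I''(t)\ge 0$ and $I$ is convex.

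The computations are essentially routine; the only step requiring a little care — and the conceptual heart of the convexity statement — is the replacement of the fixed background measure $\mu$ by $e^{-\dot\f_t}\omega_t^n$ through the flow equation, which is precisely what turns the a priori sign-indefinite term $\int_X\tr_{\omega_t}(dd^c\dot\f_t)\,d\mu$ into a manifestly non-negative Dirichlet-type energy. One should also keep track of the normalization $d^c=\tfrac1{2i\pi}(\partial-\dbar)$ so that the integration by parts and the positivity of $d\dot\f_t\wedge d^c\dot\f_t$ come out with the correct signs.
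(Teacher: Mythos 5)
Your proof is correct and follows essentially the same route as the paper: Jensen's inequality (concavity of $\log$) together with $\int_X\omega_t^n=\int_X\theta_t^n\le 2^nV$ for the quasi-monotonicity, and for convexity the identity $\ddot\f_t=\Delta_{\omega_t}\dot\f_t+\tr_{\omega_t}\chi$ combined with $\mu=e^{-\dot\f_t}\omega_t^n$ and an integration by parts producing the non-negative term $n\int_X e^{-\dot\f_t}d\dot\f_t\wedge d^c\dot\f_t\wedge\omega_t^{n-1}$. No discrepancies to report.
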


\begin{proof} 
It follows from the concavity of the logarithm that
$$
I'(t) =\int_X \log \left[ \frac{(\theta_t+dd^c \f_t)^n}{\mu} \right] \frac{d \mu}{V}
\leq \log \int_X \frac{(\theta_t+dd^c \f_t)^n}{V} \leq n \log 2,
$$
since we impose $\theta_t \leq 2 \omega$.

For the second assertion, observe that
$$
\ddot{\f_t}=\Delta_{\omega_t} \dot{\f_t}+\tr_{\omega_t}(\chi) \geq \Delta_{\omega_t} \dot{\f_t}
$$
when $\chi \geq 0$, hence
$$
V \, I''(t)=\int_X \ddot{\f_t} \, d\mu \geq \int_X \Delta_{\omega_t} \dot{\f_t} e^{-\dot{\f_t}} \omega_t^n
=n \int_X e^{-\dot{\f}_t} d \dot{\f_t}   \wedge d^c \dot{\f_t} \wedge \omega_t^{n-1} \geq 0.
$$
\end{proof}

Recall \cite[Proposition 1.7]{GZ05} that since $\mu$ is a smooth measure, there exists $C_{\mu}>0$ such that
$$
\sup_X \f -C_{\mu} \leq \frac{1}{V} \int_X \f \, d\mu \leq \sup_X \f
$$
for all $2\omega$-psh functions $\f$.

\subsection{Various bounds from below}

\subsubsection{Bounded initial data}
 
Recall (Corollary \ref{cor:envelope})  that $\f_t$ dominates
any { subsolution}. An easy computation yields the following:

\begin{lem}  \label{lem:minoinf}
Fix $0<T<T_{max}^2$. There exists $C=\sup_X h+C_n>0$ such that
$\p_t=(1-\sqrt{t})(\f_0-\inf_X \f_0+1)-Ct+\inf_X \f_0-1$ is a subsolution, hence
$$
(1-\sqrt{t}) (\f_0-\inf_X \f_0+1)-Ct+\inf_X \f_0-1 \leq \f_t, 
$$
$\text{ for all } (t,x) \in [0,T] \times X$.
In particular $\f_t \geq \inf_X \f_0-C'$.
\end{lem}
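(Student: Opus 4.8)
The plan is to exhibit the explicit function $\p_t=(1-\sqrt{t})(\f_0-\inf_X\f_0+1)-Ct+\inf_X\f_0-1$ as a subsolution of $(CMAF)$ and then invoke Corollary~\ref{cor:envelope}. Write $m:=\inf_X\f_0$ and $\rho_0:=\f_0-m+1\ge 1$, so that $\p_t=(1-\sqrt t)\rho_0-Ct+m-1$. At $t=0$ we have $\p_0=\rho_0+m-1=\f_0$, so the initial condition $\p_0\le\f_0$ holds (with equality). It remains to check the differential inequality $\dot\p_t\le\log\bigl[(\theta_t+dd^c\p_t)^n/\mu\bigr]$ on $[0,T]\times X$.

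For the left-hand side, $\dot\p_t=-\frac{1}{2\sqrt t}\rho_0-C\le -C$ since $\rho_0\ge 1$ and $t>0$; in fact $\dot\p_t\to-\infty$ as $t\to 0^+$, which is what makes the bound robust near the singular time. For the right-hand side I need a lower bound on $(\theta_t+dd^c\p_t)^n$. Compute $\theta_t+dd^c\p_t=\theta_t+(1-\sqrt t)dd^c\f_0=(1-\sqrt t)(\omega+dd^c\f_0)+\theta_t-(1-\sqrt t)\omega$. Using $\theta_t=\omega+t\chi$ one gets $\theta_t-(1-\sqrt t)\omega=\sqrt t\,\omega+t\chi$, which for $0\le t\le T<T_{max}^2$ (so $\sqrt t<1$ and $\sqrt t\,\omega$ dominates the error $t\chi$ after shrinking $T$, using $\theta_t\ge\omega/2$) is bounded below by a positive multiple of $\sqrt t\,\omega$, say $\ge c\sqrt t\,\omega$ for a dimensional-type constant $c>0$ depending only on $\omega$ and $\chi$. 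Hence $\theta_t+dd^c\p_t\ge(1-\sqrt t)T_0+c\sqrt t\,\omega$ as positive currents, where $T_0=\omega+dd^c\f_0\ge 0$, and therefore by the superadditivity of $\det$ on positive Hermitian matrices, $(\theta_t+dd^c\p_t)^n\ge (c\sqrt t)^n\omega^n$. Taking logs, $\log\bigl[(\theta_t+dd^c\p_t)^n/\mu\bigr]\ge n\log(c\sqrt t)-\sup_X h=\tfrac n2\log t+n\log c-\sup_X h$.

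So the subsolution inequality reduces to $-\frac{1}{2\sqrt t}\rho_0-C\le \frac n2\log t+n\log c-\sup_X h$ for all $(t,x)\in(0,T]\times X$. Since $\rho_0\ge 1$, it suffices that $-\frac{1}{2\sqrt t}-C\le\frac n2\log t+n\log c-\sup_X h$; as $t\searrow 0$ the left side $\to-\infty$ faster is false—rather both sides $\to-\infty$, but $-1/(2\sqrt t)$ dominates $\frac n2\log t$, so the inequality holds near $0$; on any compact subinterval $[\e,T]$ it holds provided $C$ is chosen large enough. Concretely, choosing $C=\sup_X h+C_n$ with $C_n$ a constant depending only on $n$, $c$ (hence on $\omega,\chi,T$) large enough that $n\log c-\sup_X h+C\ge 0$ and that $-\frac{1}{2\sqrt t}\le\frac n2\log t$ on $(0,1]$ forces nothing extra, one verifies the inequality by elementary one-variable calculus on the function $t\mapsto\frac{1}{2\sqrt t}+\frac n2\log t$. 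This makes $\p_t$ a genuine subsolution, so Corollary~\ref{cor:envelope} gives $\p_t\le\f_t$ on $[0,T]\times X$. Finally, since $0\le 1-\sqrt t\le 1$ and $\rho_0\ge 1$, we have $(1-\sqrt t)\rho_0+m-1\ge m-1$... actually more carefully $(1-\sqrt t)\rho_0\ge 0$ so $\p_t\ge -Ct+m-1\ge -CT+\inf_X\f_0-1$, giving $\f_t\ge\inf_X\f_0-C'$ with $C'=CT+1$.

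The main obstacle, such as it is, is bookkeeping the constants: one must pin down $c$ and $C_n$ so that the differential inequality holds uniformly on all of $(0,T]$ including the delicate region $t\to 0^+$, where one exploits that $-\rho_0/(2\sqrt t)$ decays much faster than the logarithmic term $\frac n2\log t$ on the right. There is no deep difficulty; the only genuine input beyond calculus is the determinant superadditivity inequality $(A+B)^n\ge B^n$ for semipositive $(1,1)$-forms, and the reduction $\theta_t-(1-\sqrt t)\omega\ge c\sqrt t\,\omega$ which uses the standing assumption $\omega/2\le\theta_t\le 2\omega$ together with shrinking $T$.
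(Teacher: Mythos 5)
Your proof is correct and takes essentially the same route as the paper: the same decomposition $\theta_t+dd^c\p_t=(1-\sqrt t)(\omega+dd^c\f_0)+\sqrt t\,\omega+t\chi$, a lower bound by a positive multiple of $\sqrt t\,\omega$, and the comparison principle after fixing $C=\sup_X h+C_n$ via elementary calculus on $t\mapsto \frac{1}{2\sqrt t}+\frac n2\log t$. The only cosmetic point is that no shrinking of $T$ is needed, since $\sqrt t\,\omega+t\chi=\sqrt t\,\theta_{\sqrt t}\ge\frac{\sqrt t}{2}\omega$ follows directly from the standing normalization $\theta_s\ge\omega/2$ for $s=\sqrt t<T_{max}$ (this is exactly why the hypothesis reads $T<T_{max}^2$), which is precisely the paper's argument.
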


Such bound from below is useful to establish the "continuity" of the flow at time zero, starting from a 
bounded but non-continuous initial data.

\begin{proof}
 Observe first that $\p_t$ is $\theta_t$-plurisubharmonic, with
$$
\theta_t+dd^c \p_t=(1-\sqrt{t}) (\omega+dd^c \f_0)+\sqrt{t} \theta_{\sqrt{t}}
\geq \frac{\sqrt{t}}{2} \omega.
$$
We infer
$$
\frac{n}{2} \log t -n \log 2-\sup_X h \leq \log \left[ \frac{(\theta_t+dd^c \p_t)^n}{\mu} \right]
$$
while $\dot{\p}_t=-\frac{1}{2\sqrt{t}} (\f_0-\inf_X \f_0+1)-C \leq -\frac{1}{2\sqrt{t}}-C$.

Since $\p_t$ and $\f_0$ coincide at time zero,
the conclusion follows by adjusting the value of $C$ so that for all $t>0$,
$$
-\frac{1}{2\sqrt{t}}-C < \frac{n}{2} \log t -n \log 2-\sup_X h.
$$
\end{proof}

\subsubsection{Finite energy condition}

 Set
$$
E(\f_t):=\frac{1}{(n+1)V} \sum_{j=0}^n \int_X \f_t (\theta_t+dd^c \f_t)^j \wedge \theta_t^{n-j}.
$$
When $\chi \equiv 0$, i.e. $\theta_t \equiv \omega$, this is the Aubin-Yau energy functional which plays a crucial role in studying the K\"ahler-Einstein equation (see \cite{Tian,BEGZ} 
for recent developments). In particular
$$
E(\f_0)=\frac{1}{(n+1)V} \sum_{j=0}^n \int_X \f_0 (\omega+dd^c \f_0)^j \wedge \omega^{n-j}.
$$

\begin{defi}
We let ${\mathcal E}^1(X,\omega)$ denote the "finite energy class", i.e. the set of 
$\omega$-plurisubharmonic  functions $\f_0$ such that $E(\f_0)>-\infty$.
\end{defi}

A basic observation is the following monotonicity property:

\begin{lem} \label{lem:energymonotone}
Fix $0<T<T_{max}$.
There exists $C \geq 0$ such that
$$
t \mapsto E(\f_t) +C  t \text{ is increasing on } [0,T],
$$
with $C=0$ if $\chi=0$.
In particular 
$$
\frac{E(\f_0)-Ct}{2^n} \leq \frac{E(\f_0)-Ct}{\{ \theta_t\}^n/V}  \leq  \sup_X \f_t.
$$
\end{lem}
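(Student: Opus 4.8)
The plan is to differentiate the energy $E(\f_t)$ along the flow and show that $\frac{d}{dt}E(\f_t)$ is bounded below by a constant $-C$ depending only on the background data (with $C=0$ when $\chi=0$). Recall the standard first-variation formula for the Monge--Amp\`ere energy: if $\f_t$ is a smooth path of $\theta_t$-psh functions and the reference forms $\theta_t=\omega+t\chi$ themselves vary, then
\begin{equation*}
\frac{d}{dt}E(\f_t)=\frac{1}{V}\int_X \dot{\f_t}\,\omega_t^n+\frac{1}{(n+1)V}\sum_{j=0}^{n}\int_X \f_t\,\partial_t\big(\omega_t^j\wedge\theta_t^{n-j}\big)\Big|_{\text{fixed }\f},
\end{equation*}
where $\omega_t=\theta_t+dd^c\f_t$, and the second batch of terms comes purely from $\dot\theta_t=\chi$. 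The first term is exactly $\frac{1}{V}\int_X \log\big[\omega_t^n/\mu\big]\,\omega_t^n$ by $(CMAF)$; by Jensen's inequality applied to the probability measure $\omega_t^n/V$ (using $\int_X\omega_t^n=\{\theta_t\}^n$, which is comparable to $V$) this is bounded below by $\frac{\{\theta_t\}^n}{V}\log\big(\{\theta_t\}^n/\text{something}\big)$, i.e. bounded below by a uniform constant on $[0,T]$. The second batch of terms involves $\chi$ paired against $\f_t$ and mixed powers of $\omega_t$ and $\theta_t$; after integrating by parts to move $dd^c$ off $\f_t$, these become integrals of the form $\int_X \chi\wedge(\text{positive current})$, which are cohomological and hence uniformly bounded on $[0,T]$ (and vanish identically when $\chi=0$). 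Collecting, $\frac{d}{dt}E(\f_t)\ge -C$, which is the asserted monotonicity of $t\mapsto E(\f_t)+Ct$.

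For the displayed inequality $\frac{E(\f_0)-Ct}{2^n}\le \frac{E(\f_0)-Ct}{\{\theta_t\}^n/V}\le \sup_X\f_t$, I would argue as follows. From the monotonicity just proved, $E(\f_0)-Ct\le E(\f_t)$. Now use the elementary bound $E(\f_t)\le \frac{\{\theta_t\}^n}{V}\sup_X\f_t$: indeed each term $\int_X\f_t\,\omega_t^j\wedge\theta_t^{n-j}\le \sup_X\f_t\cdot\int_X\omega_t^j\wedge\theta_t^{n-j}=\sup_X\f_t\cdot\{\theta_t\}^n$ since $\omega_t^j\wedge\theta_t^{n-j}$ is a positive measure of total mass $\{\theta_t\}^n$ (here one uses that $\f_t$ is $2\omega$-psh so $\sup_X\f_t$ is finite and the upper bound makes sense even if $\sup_X\f_t<0$). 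Averaging over $j=0,\dots,n$ gives $E(\f_t)\le \frac{\{\theta_t\}^n}{V}\sup_X\f_t$. Combining, $E(\f_0)-Ct\le \frac{\{\theta_t\}^n}{V}\sup_X\f_t$, hence $\frac{E(\f_0)-Ct}{\{\theta_t\}^n/V}\le\sup_X\f_t$, which is the right-hand inequality. The left-hand inequality is then immediate from $\{\theta_t\}^n/V\le 2^n$, which holds because $\theta_t\le 2\omega$ forces $\{\theta_t\}^n\le (2\{\omega\})^n=2^nV$ — one must only note that if $E(\f_0)-Ct\ge 0$ then dividing by the smaller denominator $\{\theta_t\}^n/V\le 2^n$ increases the quotient, and if $E(\f_0)-Ct<0$ the chain of inequalities still holds since we are dividing a negative number by positive denominators with the smaller one on the left (so the left quotient is more negative). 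In either case one checks the sign bookkeeping directly.

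The main obstacle I anticipate is making the first-variation computation rigorous, in particular handling the $t$-dependence of the reference form $\theta_t$ and justifying the integration by parts: the cleanest route is to write $E(\f_t)$ using a chosen family of potentials and differentiate, but one has to be careful that the ``cohomological'' error terms produced by $\dot\theta_t=\chi$ are genuinely controlled by $\int_X |\chi|\wedge\omega^{n-1}$-type quantities uniformly in $t\in[0,T]$, which uses $\theta_t\le 2\omega$ and the fact that the relevant mixed Monge--Amp\`ere masses are determined by cohomology classes that vary continuously (indeed polynomially) in $t$. A secondary, more routine, point is to verify the sign conventions in the final two-sided estimate, since $E(\f_0)-Ct$ and $\sup_X\f_t$ need not be of any definite sign; this is bookkeeping rather than a real difficulty. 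Once the derivative estimate $\frac{d}{dt}E(\f_t)\ge -C$ is in hand, everything else is elementary.
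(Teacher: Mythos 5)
Your overall route is the paper's: differentiate $E(\f_t)$ along the flow, handle the term $\frac1V\int_X\dot\f_t\,\omega_t^n$ by concavity of the logarithm (this part of your argument is fine and matches the paper), and then control the extra terms coming from $\dot\theta_t=\chi$. The genuine gap is in your treatment of those extra terms. They have the form $\int_X \f_t\,\chi\wedge\Theta$ with $\Theta$ a positive closed $(n-1,n-1)$-current built from $\omega_t$ and $\theta_t$; the function $\f_t$ appears \emph{undifferentiated}, so there is no $dd^c$ to move off $\f_t$ by parts, and these integrals are not cohomological. They genuinely depend on the potential and are of the same order of magnitude as the energy itself; since $\f_t$ admits no uniform lower bound (and the estimate must be uniform in the approximation parameter $j$, with $\inf_X\f_{0,j}\to-\infty$ in general), the claim that this batch is ``uniformly bounded on $[0,T]$'' is unjustified and false in general. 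The paper's proof is structured precisely to get around this: when $\chi\le 0$ one writes $\f_t\,\chi\ge(\sup_X\f_t)\,\chi$, which turns these terms into cohomological quantities multiplied by $\sup_X\f_t$, bounded via Lemma \ref{lem:majosup}; for general $\chi$ one chooses $A>0$ with $\chi\le A\theta_t$ on $[0,T]$, uses the monotonicity $\int\f_t\,\omega_t^{j}\wedge\theta_t^{n-j}\le\int\f_t\,\omega_t^{j-1}\wedge\theta_t^{n-j+1}$ to compare the leftover terms with $E(\f_t)$ itself, and arrives at a differential inequality $\frac{d}{dt}E(\f_t)\ge -C_1+C_2E(\f_t)$, from which the monotonicity of $E(\f_t)+Ct$ on $[0,T]$ follows by an ODE/Gronwall comparison. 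Without some substitute for this step your derivative bound $\frac{d}{dt}E(\f_t)\ge -C$ is not established.

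Two smaller remarks. First, your Jensen step should be run with the probability measure $\omega_t^n/V_t$, $V_t=\int_X\omega_t^n=\{\theta_t\}^n$, giving the lower bound $-V_t\log(V/V_t)\ge -C$ since $V/2^n\le V_t\le 2^nV$; as written (``$\omega_t^n/V$'') it is only heuristic, though easily fixed. Second, in the final display your sign bookkeeping for the left-hand inequality is backwards: if $E(\f_0)-Ct<0$, dividing by the \emph{larger} denominator $2^n$ makes the quotient \emph{less} negative, so $\frac{E(\f_0)-Ct}{2^n}\le\frac{E(\f_0)-Ct}{\{\theta_t\}^n/V}$ does not follow from $\{\theta_t\}^n/V\le2^n$ in that case; this is incidental, since the substantive conclusion is the right-hand bound $\frac{E(\f_0)-Ct}{\{\theta_t\}^n/V}\le\sup_X\f_t$, which your argument (monotonicity plus $E(\f_t)\le\frac{\{\theta_t\}^n}{V}\sup_X\f_t$) does give correctly once the derivative estimate is repaired.
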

 
\begin{proof}
We let the reader check that 
$$
V\frac{d { E}(\f_t)}{dt}=\int\dot{\f_t} \, \omega_t^n
+\frac{1}{(n+1)} \sum_{j=0}^n \int \f_t \chi \wedge [j \theta_t +(n-j) \omega_t] 
\wedge (\omega_t)^{j-1} \wedge \theta_t^{n-j-1}.
$$

The first term is almost non-negative as  follows from the concavity of the logarithm. Namely
$$
\int \log \left( \frac{\omega_t^n}{\mu} \right) \, \frac{\omega_t^n}{V_t}
\geq -\log ({V}/{V_t}) \geq -n \log 2,
$$
where $V_t:=\int_X \omega_t^n \geq V/2^n$ by our assumption $\theta_t \geq \omega/2$.
Note that  this first term is non-negative when $\chi=0$, since $V_t=V$ in this case.

The second one (which is zero if $\chi=0$) is bounded below by $-C$ when $\chi \leq 0$,
as can be checked by writing $\chi \f_t \geq \chi\sup_X \f_t$ and estimating the remaining cup-products.

If $\chi$ is not negative, we can nevertheless find $A>0$ such that $\chi-A \theta_t \leq 0$ for
all $0 \leq t \leq T$. Observing that
$$
\int \f_t (\theta_t+dd^c \f_t)^{j} \wedge \theta_t^{n-j}
\leq \int \f_t (\theta_t+dd^c \f_t)^{j-1} \wedge \theta_t^{n-j+1}
$$
we end up with a differential inequality
$$
\frac{d { E}(\f_t)}{dt} \geq -C_1+C_2 E(\f_t),
$$
for some constants $C_1,C_2 \geq 0$. It follows that  $t \mapsto E(\f_t)+C t $ is increasing on $[0,T]$
for an appropriate choice of $C=C_T$.
\end{proof}

\subsubsection{The general case}

Recall that the integrability exponent of $\f_0$ at point $x \in X$ is
$$
 c (\f_0,x)  := \sup \{c > 0  \, | \, e^{- 2c \, \f_0} \in L^1 (V_x)\},
 $$ 
where $V_x$ denotes an arbitrarily small neighborhood of $x$. We let
$$
c(\f_0):= \sup \{ c (\f_0,x)  \, | \, x \in X\}
$$
denote the uniform integrability index of $\f_0$. It follows from Skoda's integrability theorem that
$$
\frac{1}{\nu(\f_0,x)} \leq c(\f_0,x) \leq \frac{n}{\nu(\f_0,x)},
$$
where $\nu(\f_0,x)$ denotes the Lelong number of $\f_0$ at $x$. Thus $c(\f_0)=+\infty$ if and only if $\f_0$ has zero lelong number at all points.

\smallskip

Fix $0<\b<c(\f_0)$ and $0<\a$ such that 
$$
\chi +(2\b-\a) \omega \geq 0.
$$
 The measure $e^{-2\b \f_0} \mu$ is absolutely continuous with density in 
$L^p$, for some $p>1$. It follows from Kolodziej's uniform estimate \cite{Kol98} that there exists a unique continuous $\omega$-psh funtion $u$ such that 
$$
\a^n (\omega+dd^c u)^n= e^{\a u -2\b \f_0} \mu.
$$

\begin{lem} \label{lem:minorationgenerale}
For $0<t< \min(T_{max},1/2\beta)$ and for all $x \in X$,
$$
(1-2\b t)\f_0(x)+\a tu(x)+n(t \log t-t) \leq \f_t(x).
$$
\end{lem}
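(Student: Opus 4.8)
\textbf{Proof strategy for Lemma \ref{lem:minorationgenerale}.}
The plan is to exhibit the function $\p_t(x):=(1-2\b t)\f_0(x)+\a t u(x)+n(t\log t-t)$ as a subsolution of $(CMAF)$ on the time interval $(0,\min(T_{max},1/2\b))$ and then invoke Corollary \ref{cor:envelope} (in the form that $\f_t$ dominates any subsolution agreeing with, or lying below, $\f_0$ at $t=0$). The first point to check is positivity of the relevant form: writing
$$
\theta_t+dd^c\p_t=(1-2\b t)(\omega+dd^c\f_0)+\a t(\omega+dd^c u)+\bigl[\theta_t-(1-2\b t)\omega-\a t\omega\bigr],
$$
the bracket equals $t\bigl(\chi+(2\b-\a)\omega\bigr)\ge 0$ by our choice of $\a$, while the first two terms are positive currents since $\f_0\in PSH(X,\omega)$ and $u\in PSH(X,\omega)$. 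Hence $\p_t$ is $\theta_t$-psh and in fact $\theta_t+dd^c\p_t\ge \a t(\omega+dd^c u)$.

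Next I would estimate the Monge--Amp\`ere measure from below. Using $\theta_t+dd^c\p_t\ge\a t(\omega+dd^c u)$ and the defining equation $\a^n(\omega+dd^c u)^n=e^{\a u-2\b\f_0}\mu$ one gets
$$
(\theta_t+dd^c\p_t)^n\ge (\a t)^n(\omega+dd^c u)^n=t^n\,e^{\a u-2\b\f_0}\,\mu,
$$
so that
$$
\log\!\left[\frac{(\theta_t+dd^c\p_t)^n}{\mu}\right]\ge n\log t+\a u-2\b\f_0.
$$
On the other hand, differentiating $\p_t$ in $t$,
$$
\dot\p_t=-2\b\f_0+\a u+n\log t,
$$
so the subsolution inequality $\dot\p_t\le\log[(\theta_t+dd^c\p_t)^n/\mu]$ holds as an \emph{equality} in the lower bound — in particular it holds. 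Since $\p_t\to\f_0$ pointwise as $t\searrow 0^+$ (the terms $t u$ and $t\log t-t$ vanish), $\p_t$ is a subsolution with the correct initial data, and Corollary \ref{cor:envelope} gives $\p_t\le\f_t$, which is the claim.

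The one genuinely delicate point is the behaviour at the endpoint $t=0$: $\f_0$ and $u$ are only $\omega$-psh (possibly unbounded below), so $\p_t$ is not continuous up to $t=0$ and Corollary \ref{cor:envelope} as literally stated applies to smooth data. The clean way around this is to apply the comparison at the level of the smooth approximants $\f_{0,j}\searrow\f_0$ from the approximation process: replace $\f_0$ by $\f_{0,j}$ and $u$ by the solution $u_j$ of $\a^n(\omega+dd^c u_j)^n=e^{\a u_j-2\b\f_{0,j}}\mu$, run the same computation to produce a genuine smooth subsolution below $\f_{t,j}$, and then let $j\to+\infty$ using the monotone convergence $\f_{0,j}\searrow\f_0$, $u_j\to u$ together with the convergence $\f_{t,j}\to\f_t$ established earlier. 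A cosmetic subtlety is that one should take $\b$ slightly larger and $\a$ slightly smaller than a fixed pair satisfying $\chi+(2\b-\a)\omega\ge 0$ to absorb the small loss coming from $\f_{0,j}\ge\f_0$; this does not affect the final inequality after passing to the limit. Everything else is the routine calculation of $\dot\p_t$ and of the cup-product estimate above.
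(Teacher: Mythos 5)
Your proof is correct and takes essentially the same route as the paper: the same subsolution $\p_t=(1-2\b t)\f_0+\a t u+n(t\log t-t)$, the same positivity decomposition $\theta_t+dd^c\p_t=(1-2\b t)\omega_{\f_0}+\a t\,\omega_u+t[\chi+(2\b-\a)\omega]\ge 0$, the same lower bound $(\theta_t+dd^c\p_t)^n\ge(\a t)^n\omega_u^n=e^{\dot\p_t}\mu$, and the conclusion by the comparison/maximum principle since $\p_0=\f_0$. The only difference is your final approximation discussion: the paper proves the lemma under the section's standing assumption that $\f_0$ is smooth (so that issue does not arise there), and when one does transfer the bound to singular data via $\f_{0,j}\searrow\f_0$ no perturbation of $\a,\b$ (nor replacement of $u$ by $u_j$) is needed, because $\f_{0,j}\ge\f_0$ makes the same $u$ give a subsolution with initial value $\f_{0,j}$.
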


\begin{proof}
Set $\p_t:=(1-2\b t)\f_0+\a tu+n(t \log t-t)$. It follows from our choice of $\a,\b$ that
$\p_t$ is $\theta_t$-psh since
$$
\theta_t+dd^c \p_t=(1-2\b t) \omega_{\f_0}+\a t \omega_u+t[\chi +(2\b-\a) \omega] \geq 0.
$$
Moreover $\p_t$ is a subsolution of (CMAF),
$$
(\theta_t+dd^c \p_t)^n \geq \a^n t^n \omega_u^n=e^{\dot{\p}_t} \mu,
$$
as the reader will easily check. Since $\p_0=\f_0$, the conclusion follows from the maximum principle.
\end{proof}

Note for  later use that this lower bound shows in particular that the integrability exponent (resp. the Lelong number) of $\f_t$ at point $x$ increases (resp. decreases) linearly in time.

\section{Bounds on $\dot{\f_t}$}

In this section again  we assume  that $\f_t$ satisfies $(CMAF)$ with an initial data $\f_0$, which
is a smooth strictly $\omega$-psh function.

\subsection{Bounding $\dot{\f_t}$ from above}

The following elementary estimate is, together with Kolodziej's a priori bound, a key to 
establish the smoothing property of complex Monge-Amp\`ere flows:

\begin{prop} \label{pro:clef}
There exists $C=C(\sup_X \f_0,-\inf_X h)>0$ such that for all $t>0$ and $x \in X$,
 $$
\dot{\f_t}(x) \leq \frac{-\f_0(x)+C}{t}+C
$$
\end{prop}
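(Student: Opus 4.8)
The plan is to bound $\dot\f_t$ from above by applying the maximum principle (Proposition~\ref{prop:max}) to a suitably chosen auxiliary function, rather than to $\dot\f_t$ itself. The natural idea is to differentiate $(CMAF)$ in $t$: writing $\om_t=\theta_t+dd^c\f_t$, one gets
$$
\left(\pddt-\D_{\om_t}\right)\dot\f_t=\tr_{\om_t}(\chi),
$$
which is \emph{not} sign-definite in general, so $\dot\f_t$ alone does not satisfy a clean parabolic inequality. To kill the bad term $\tr_{\om_t}(\chi)$ (and simultaneously produce the $1/t$ blow-up that must appear as $t\searrow0$, since $\f_0$ may be unbounded below), I would test the quantity
$$
H_t:=t\dot\f_t+\f_t-\f_0-A t
$$
for a large constant $A$, or a minor variant where $\f_0$ is replaced by a smooth function dominating it. Computing $\left(\pddt-\D_{\om_t}\right)H_t$ one finds
$$
\left(\pddt-\D_{\om_t}\right)H_t=t\,\tr_{\om_t}(\chi)+\dot\f_t+\dot\f_t-\D_{\om_t}\f_t-\D_{\om_t}(-\f_0).
$$

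\noindent The key algebraic point is the identity $\D_{\om_t}\f_t=\tr_{\om_t}(dd^c\f_t)=\tr_{\om_t}(\om_t-\theta_t)=n-\tr_{\om_t}(\theta_t)$, and similarly $\D_{\om_t}\f_0=\tr_{\om_t}(\om-\theta_0)=\tr_{\om_t}(\om)-n$ since $\theta_0=\om$. Substituting, the term $2\dot\f_t$ is harmless on the region where the supremum is attained by the usual trick (or one absorbs one copy using $\dot\f_t=\log(\om_t^n/\mu)$ and the elementary inequality $\log x\le x$ applied to a trace), and the remaining geometric terms combine to
$$
t\,\tr_{\om_t}(\chi)+\tr_{\om_t}(\theta_t)+\tr_{\om_t}(\om)-\tr_{\om_t}(\om)+\text{(bounded)}=\tr_{\om_t}(\theta_t+t\chi)+\text{(bounded)}=\tr_{\om_t}(\om+2t\chi)+\text{bounded},
$$
which is $\ge 0$ up to a controlled constant since $\om+2t\chi$ is a positive form for $t$ in a fixed compact subinterval of $[0,T_{max}[$. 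Thus, after choosing $A$ large enough (depending only on $n$, $\sup_X|h|$ on the relevant range, and the bound $\tfrac\om2\le\theta_t\le2\om$), one arranges $\left(\pddt-\D_{\om_t}\right)H_t\le 0$, so by Proposition~\ref{prop:max}, $\sup_X H_t\le\sup_X H_0=\sup_X(t\dot\f_0)|_{t=0}=0$ (using that $\dot\f_0=\log((\om+dd^c\f_0)^n/\mu)$ is bounded, or simply $H_0=-At\le 0$ after the substitution). Rearranging $H_t\le0$ gives $t\dot\f_t\le \f_0-\f_t+At$, and combining with the upper bound $\f_t\le\sup_X\f_0+Ct$ from Lemma~\ref{lem:majosup} (which only involves $\sup_X\f_0$ and $-\inf_X h$) yields exactly
$$
\dot\f_t(x)\le\frac{-\f_0(x)+C}{t}+C
$$
with $C=C(\sup_X\f_0,-\inf_X h)$.

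\noindent The main obstacle is getting the bookkeeping of the trace terms exactly right so that the bad contribution $t\,\tr_{\om_t}(\chi)$ is genuinely absorbed by the good contribution $\tr_{\om_t}(\theta_t)$ — this forces the specific combination $t\dot\f_t+\f_t$ rather than either piece alone, and one must be careful that the extra copy of $\dot\f_t$ produced by differentiating $t\dot\f_t$ is handled (either it has a favorable sign at the maximum or it is turned into a trace via $\log x\le x-1$). A secondary technical point is that $\f_0$ is only $\om$-psh, not smooth, so strictly speaking one should run the argument with the smooth approximants $\f_{0,j}\searrow\f_0$ of the approximation process — but since the statement is phrased for smooth strictly $\om$-psh $\f_0$ (the running assumption of this section), this is not an issue here, and the constant $C$ manifestly does not degenerate along $\f_{0,j}$ since it depends only on $\sup_X\f_{0,j}\le\sup_X\f_{0,1}$ and $-\inf_X h$.
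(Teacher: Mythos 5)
Your overall strategy is indeed the paper's (apply the parabolic maximum principle of Proposition \ref{prop:max} to a combination of $t\dot{\f_t}$ with the potentials), but your test function has the wrong sign and the computation does not close. The correct choice is $H(t,x)=t\dot{\f_t}-(\f_t-\f_0)-nt$: then $\partial_t H=t\ddot{\f_t}-n$ (the two copies of $\dot{\f_t}$ cancel, instead of adding up), and since $\ddot{\f_t}=\Delta_{\omega_t}\dot{\f_t}+\tr_{\omega_t}(\chi)$, $\Delta_{\omega_t}\f_t=n-\tr_{\omega_t}(\theta_t)$ and $\Delta_{\omega_t}\f_0=\tr_{\omega_t}(T_0)-\tr_{\omega_t}(\omega)$, all trace terms cancel except
$$
\left(\pddt-\Delta_{\omega_t}\right)H=-\tr_{\omega_t}(T_0)\le 0,
$$
where the positivity of the initial current $T_0=\omega+dd^c\f_0$ is exactly what is used; no large constant $A$ is needed. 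With your choice $H_t=t\dot{\f_t}+\f_t-\f_0-At$ you instead pick up $+2\dot{\f_t}$ (which is precisely the quantity you are trying to bound, so it cannot be absorbed uniformly), the geometric terms come out with the wrong sign, and your identity $\Delta_{\omega_t}\f_0=\tr_{\omega_t}(\omega)-n$ is false (the correct one is $\tr_{\omega_t}(T_0)-\tr_{\omega_t}(\omega)$, and $T_0\neq\omega_t$). Moreover the step ``the remaining terms are $\ge 0$ up to a controlled constant, hence choosing $A$ large makes $(\pddt-\Delta_t)H_t\le 0$'' is a non sequitur: quantities like $\tr_{\omega_t}(\omega)$ or $\tr_{\omega_t}(T_0)$ are nonnegative but have no uniform upper bound, so they cannot be dominated by subtracting a constant $A$.

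Even granting $H_t\le 0$ for your $H_t$, the conclusion would not follow: rearranging gives $\dot{\f_t}\le\frac{\f_0-\f_t}{t}+A$, which requires a \emph{lower} bound on $\f_t$ (hence on $\inf_X\f_0$) rather than the upper bound of Lemma \ref{lem:majosup}, and it produces $+\f_0(x)/t$ instead of the crucial $-\f_0(x)/t$. The whole point of the Proposition is that the constant depends only on $\sup_X\f_0$ and $-\inf_X h$, so that the estimate survives the approximation of initial data that are unbounded below. With the correct sign, $H\le 0$ gives $t\dot{\f_t}\le\f_t-\f_0+nt$, and Lemma \ref{lem:majosup}, namely $\f_t\le\sup_X\f_0+[n\log 2-\inf_X h]\,t$, then yields exactly the stated bound. (Your closing remark about working with the smooth approximants $\f_{0,j}$ is fine and matches the paper's setting.)
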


\begin{proof}
Consider $H(t,x):=t\dot{\f_t}(x)-(\f_t-\f_0)(x)-nt$.  Observe that 
$$
\frac{\partial H}{\partial t}=t\ddot{\f_t}-n=t \Delta_{\omega_t} \dot{\f_t}+t \tr_{\omega_t} \chi-n
$$
while
$$
\Delta_{\omega_t} H=t \Delta_{\omega_t} \dot{\f_t}-\Delta_{\omega_t} (\f_t-\f_0)
=t \Delta_{\omega_t} \dot{\f_t}-[n-\tr_{\omega_t}(T_0)-t \tr_{\omega_t}(\chi) ]
$$
therefore
$$
\left( \frac{\partial }{\partial t}-\Delta_{\omega_t} \right) H=-\tr_{\omega_t}(T_0) \leq 0.
$$

It thus follows from Proposition \ref{prop:max} that
$H$ attains its maximum on $(t=0)$. Now $H(0,x) \equiv 0$ hence Lemma \ref{lem:majosup} 
yields the conclusion.
\end{proof}

Note that similar bounds involving $\inf_X \f_0$ were previously obtained by Song-Tian \cite{ST}.
When $\dot{\f_0}$ is bounded from above, one can also show that
$\dot{\f_t}$ is uniformly bounded from above (see Szekelyhidi-Tosatti \cite{SzTo}
or Proposition \ref{prop:upperbounddensity}). However all these  estimates
require the initial data $\f_0$ to be bounded, an hypothesis which we want to avoid here.

\subsection{Bounding the oscillation of $\f_t$}

Observe that $\f_t$ is a family of $\theta_t$-psh functions such that
$$
(\theta_t+dd^c \f_t)^n=F_t \omega^n,
$$
where for each fixed $t>0$, the densities
$$
F_t=\exp(\dot{\f_t}+h) \leq \exp\left( \frac{-\f_0(x)+C}{t}+C' \right)
$$
are uniformly in $L^2(\omega^n)$ if $\f_0$ has zero Lelong number at all points (by Skoda's integrability theorem \cite{Sko}). It follows therefore from the uniform version of Kolodziej's estimates (see \cite{Kol98,EGZ08})
that the oscillation of $\f_t$ is uniformly bounded:

\begin{thm} \label{thm:oscillation}
Assume $\f_0$ has zero Lelong number at all points $x \in X$.
For each $t>0$, there exists $M(t)>0$ independent of $\inf_X \f_0$ such that
$$
\rm{Osc}_X(\f_t) \leq M(t).
$$

When $\f_0$ has some positive Lelong number and $T_{max}$ is large 
enough \footnote{e.g. when $\{\eta\}-c_1(X)=\{\chi\}$ is nef so that $T_{max}=+\infty$.}, 
then the Lelong numbers of $\f_0/t$ become so small (when $t$ is large) that 
$f_t$ is uniformly in $L^{1+\e}(\omega^n)$
for $t \geq t_{\e}$ and the same conclusion thus applies.
\end{thm}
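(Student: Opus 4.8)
The plan is to read the bound off from three ingredients that are already in hand: the pointwise control of $\dot\f_t$ in Proposition~\ref{pro:clef}, Skoda's integrability theorem, and the uniform $L^\infty$-estimate of Kolodziej in the form valid for a varying cohomology class (as in \cite{EGZ08}). The only thing requiring care is the bookkeeping of constants, since the conclusion has to be insensitive to $\inf_X\f_0$.

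First I would normalise $\sup_X\f_0=0$; this does not change $\operatorname{Osc}_X(\f_t)$, because by Corollary~\ref{cor:envelope} adding a constant to $\f_0$ adds the same constant to $\f_t$. With this normalisation Lemma~\ref{lem:majosup} gives $\sup_X\f_t\le(n\log2-\inf_Xh)\,t$, so only a lower bound on $\f_t$ is missing. Write $\omega_t:=\theta_t+dd^c\f_t$ and $\omega_t^n=F_t\,\omega^n$ with $F_t=e^{\dot\f_t+h}$. Proposition~\ref{pro:clef}, whose constant now depends only on $\inf_Xh$, yields $F_t\le\exp(-\f_0/t+C')$ with $C'=C'(t,\inf_Xh)$; in particular this bound on the density does not see $\inf_X\f_0$.

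Next I would invoke the $L^\infty$ theory. If $\f_0$ has zero Lelong numbers everywhere then Skoda's integrability theorem gives $e^{-p\f_0/t}\in L^1(\omega^n)$ for every $p<\infty$ and every $t>0$, so $\|F_t\|_{L^2(\omega^n)}\le A(t)$ with $A(t)$ independent of $\inf_X\f_0$. Since $\f_t$ is strictly $\theta_t$-psh, the normalised potential $u_t:=\f_t-\sup_X\f_t\le0$ solves $(\theta_t+dd^c u_t)^n=F_t\,\omega^n$ with total mass $\{\theta_t\}^n\in[2^{-n}V,2^nV]$, and since $\omega/2\le\theta_t\le2\omega$ the uniform version of Kolodziej's estimate (\cite{Kol98,EGZ08}) bounds $\|u_t\|_{L^\infty}$, hence $\operatorname{Osc}_X(\f_t)=\|u_t\|_{L^\infty}$, by a constant $M(t)$ depending only on $A(t)$, $n$ and $\omega$ --- in particular not on $\inf_X\f_0$. (This last point is exactly what makes the estimate survive the limit over the smooth approximants $\f_{0,j}\searrow\f_0$ in a later section, since $\f_{0,j}\ge\f_0$ forces $e^{-\f_{0,j}/t}\le e^{-\f_0/t}$.) When instead $\f_0$ has some positive Lelong number, the Lelong numbers of $\f_0/t$ are $\nu(\f_0,x)/t$ and shrink as $t\to+\infty$, so by Skoda's theorem $e^{-(1+\e)\f_0/t}\in L^1(\omega^n)$ once $t$ exceeds a threshold $t_\e$ controlled by $\e$ and the largest Lelong number of $\f_0$; then $F_t$ is uniformly in $L^{1+\e}(\omega^n)$ for $t\in[t_\e,T_{max})$ and the same argument applies there, which is only useful when $t_\e<T_{max}$, e.g.\ when $\{\chi\}$ is nef so that $T_{max}=+\infty$.

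I do not expect a genuine obstacle here: the two analytic inputs (Skoda and the uniform Kolodziej/EGZ estimate) are used as black boxes, and the real work is the elementary but essential verification that neither the constant in Proposition~\ref{pro:clef} nor the one in Kolodziej's estimate depends on $\inf_X\f_0$ --- the first because it is built from $\sup_X\f_0$ and $\inf_Xh$ only, the second because it depends on the right-hand side only through its $L^p$-norm and on the class only through the fixed pinching $\omega/2\le\theta_t\le2\omega$.
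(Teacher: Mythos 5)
Your argument is correct and is essentially the paper's own: you combine the upper bound on $\dot\f_t$ from Proposition \ref{pro:clef} with Skoda's integrability theorem to get a uniform $L^2$ (resp.\ $L^{1+\e}$ for $t\ge t_\e$) bound on the densities $F_t=e^{\dot\f_t+h}$, and then invoke the uniform Kolodziej estimate of \cite{Kol98,EGZ08} for the pinched family $\omega/2\le\theta_t\le 2\omega$. Your added bookkeeping (normalising $\sup_X\f_0=0$ and noting that $\f_{0,j}\ge\f_0$ gives $e^{-\f_{0,j}/t}\le e^{-\f_0/t}$ along the approximants) is exactly the right way to make the independence of $\inf_X\f_0$ precise, and it matches the intended use of the theorem.
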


\subsection{Bounding $\dot{\f_t}$ from below}

The following estimate is due to  Song-Tian \cite[Lemma 3.2]{ST}:

\begin{prop} \label{pro:STbelow}
Fix $0<T<T_{max}$ and
assume $\f_0$ is bounded. Then for all $(x,t) \in X \times ]0,T]$,
 $$
\dot{\f_t}(x) \geq n \log t-A Osc_X \f_0-C,
$$
where $A>1/(T_{max}-T)$ and $C=C(A)>0$.
\end{prop}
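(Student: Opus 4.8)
The plan is to establish a lower bound for $\dot{\f_t}$ via the maximum principle, following the Song--Tian strategy. The key idea is to construct an auxiliary quantity $H$ that combines $\dot{\f_t}$ with a carefully chosen multiple of $\f_t$ (using the overshoot $T_{max}-T$ to absorb an unfavorable sign), so that the heat operator $\left(\pddt-\D_{\om_t}\right)$ applied to $H$ has a definite sign and Proposition~\ref{prop:max} applies.

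Concretely, I would consider $H(t,x):=t\,\dot{\f_t}(x)-A(\f_t-\f_0)(x)$ for a constant $A>1/(T_{max}-T)$ to be fixed. Differentiating in time and using $\ddot{\f_t}=\D_{\om_t}\dot{\f_t}+\tr_{\om_t}(\chi)$ gives
$$
\left(\pddt-\D_{\om_t}\right)H
= (1-A)\tr_{\om_t}(\chi)+A\,\tr_{\om_t}(\theta_t+dd^c\f_0)+\dot{\f_t}-A\,\tr_{\om_t}(\om_t)\cdot(\text{coefficient}),
$$
after expanding $\D_{\om_t}(\f_t-\f_0)=\tr_{\om_t}(\om_t-\theta_t-dd^c\f_0)=n-\tr_{\om_t}(\theta_t+dd^c\f_0)$. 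Writing $\theta_t+dd^c\f_0=\theta_0+dd^c\f_0+t\chi=\om_{\f_0}+t\chi$, the term $A\,\tr_{\om_t}(\om_{\f_0})$ is nonnegative (as $\f_0$ is $\om$-psh, hence $\om_{\f_0}\geq 0$) and can be discarded. Collecting terms one gets
$$
\left(\pddt-\D_{\om_t}\right)H \geq \dot{\f_t}-nA+(\,1-A+At\,)\tr_{\om_t}(\chi).
$$
The point of choosing $A>1/(T_{max}-T)$: since $\theta_t=\om+t\chi$ stays K\"ahler up to $T_{max}$, for $t\le T$ one has $\theta_T=\om+T\chi\geq 0$, so $\chi\geq -\om/(T_{max}-T+\delta)$ roughly, whence $(1-A+At)\tr_{\om_t}(\chi)$ can be controlled below by a multiple of $-\tr_{\om_t}(\om)$ times a negative coefficient, which is then $\geq 0$ or at worst $\geq -C\tr_{\om_t}(\om)$ — and more carefully, choosing $A$ just above $1/(T_{max}-T)$ makes $(A-1-At)<0$ on $[0,T]$ up to adjusting, so that $(1-A+At)\,\chi \geq (\text{negative})\cdot(-c\,\om)\geq 0$ as a form. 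Then $\left(\pddt-\D_{\om_t}\right)H\geq \dot{\f_t}-nA$, and on the set where $H$ attains a spatial minimum with $H$ also small, $\dot{\f_t}$ itself can be bounded below: at an interior minimum in time one would need $\left(\pddt-\D_{\om_t}\right)H\leq 0$, forcing $\dot{\f_t}\leq nA$ there, which plugged back into $H$ gives a bound. The cleaner route is to apply the $\geq$-version of Proposition~\ref{prop:max}: if $\left(\pddt-\D_{\om_t}\right)H\geq -\,C_0$ for a constant, a minor modification (replacing $H$ by $H+C_0 t$) puts us in the setting of the maximum principle and yields $\inf_X H_t\geq \inf_X H_0 - C_0T = -C_0 T$, since $H_0\equiv 0$. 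This gives $t\dot{\f_t}\geq A(\f_t-\f_0)-C_0T$, and combining with Lemma~\ref{lem:majosup} (upper bound on $\f_t$) and the trivial lower bound $\f_t-\f_0\geq -\mathrm{Osc}_X\f_0-(\,\text{stuff})$ — more precisely using $\f_t\geq \inf_X\f_0 - C'$ from Lemma~\ref{lem:minoinf} — one extracts $\dot{\f_t}(x)\geq -A\,\mathrm{Osc}_X\f_0/t - C/t$, which is not quite the claimed $n\log t$ form.

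To get the sharper $n\log t$ term I would instead combine the above with the direct observation that, at a spatial minimum point of $\f_t$, the Monge--Amp\`ere equation forces $\dot{\f_t}=\log\big[(\theta_t+dd^c\f_t)^n/\mu\big]\geq \log\big[(\theta_t)^n/\mu\big]\geq -C$ — no, rather one uses that along the flow $\dot{\f_t}+$ (lower order) is itself $\theta_t$-subharmonic-type, and the $n\log t$ arises from comparing with the model subsolution $\p_t=\psi_0+n(t\log t - t)$ of the kind appearing in Lemma~\ref{lem:minorationgenerale}. So the final step is: show $H(t,x):=t\dot{\f_t}-A(\f_t-\f_0)-n\,t\log t$ (or a close variant) satisfies a favorable differential inequality, conclude via Proposition~\ref{prop:max} that its infimum is controlled at $t=0$, and unwind using the already-established bounds on $\f_t$, $\sup_X\f_t$ and $\mathrm{Osc}_X\f_0$. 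I expect the main obstacle to be bookkeeping the sign of the $\tr_{\om_t}(\chi)$ term: one must use the hypothesis $A>1/(T_{max}-T)$ precisely to guarantee that $(1-A)I + (\text{multiple of }t)\chi$ is $\geq -C\,\om_t$ as a $(1,1)$-form uniformly on $[0,T]$, which is where the choice of $A$ and the constant $C=C(A)$ genuinely enter; everything else is a routine application of the maximum principle plus the previously proved estimates. Since Proposition~\ref{pro:STbelow} is explicitly attributed to \cite[Lemma 3.2]{ST}, I would, in writing this up, either reproduce that argument in the above form or simply cite it, noting that the boundedness of $\f_0$ is used both to make sense of $\mathrm{Osc}_X\f_0$ and to run the maximum principle on a compact space-time slab.
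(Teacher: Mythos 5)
There is a genuine gap: your argument never actually produces the $n\log t$ term, and the mechanism that produces it in the paper is absent from your proposal. Your first computation (even granting the garbled expansion of $(\pddt-\D_t)H$) only yields a bound of the shape $\dot{\f_t}\geq -A\,\mathrm{Osc}_X\f_0/t - C/t$, which you concede is weaker than the claim, and the proposed fix -- "show $t\dot{\f_t}-A(\f_t-\f_0)-nt\log t$ satisfies a favorable differential inequality" -- is never verified. In fact it does not work as written: because you subtract $A(\f_t-\f_0)$ rather than add $A\f_t$, the heat operator produces the term $-A\,\tr_{\om_t}(\theta_t+dd^c\f_0)$ with the unfavorable sign, and this trace is not bounded, so the minimum-principle inequality at the minimum point cannot be unwound. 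You also misplace the role of the hypothesis $A>1/(T_{max}-T)$: it is not about the sign of $(1-A+At)\chi$, but about the identity $A\theta_t+\chi=A\,\theta_{t+1/A}$, which is a K\"ahler form ($\geq A\om/2$) precisely because $t+1/A<T_{max}$.

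The paper's proof works with $G=\dot{\f_t}+A\f_t-H(t)$, for which $(\pddt-\D_t)G=A\dot{\f_t}+\tr_{\om_t}(A\theta_t+\chi)-H'(t)-An$, so the trace term appears with the good sign. The two ideas you are missing are: (i) the positive term is converted into density control via $\tr_{\om_t}(\om)\geq n\left(\om_t^n/\om^n\right)^{-1/n}\geq c\,f_t^{-1/n}$ where $\log f_t=\dot{\f_t}$; at the space-time minimum of $G$ (forced to occur at some $t_0>0$ by requiring $H(0)=-\infty$) the inequality $(\pddt-\D_t)G\leq 0$ then gives $f_{t_0}^{-1/n}(x_0)\leq C_1\left[C_2+H'(t_0)\right]$, i.e.\ a lower bound on $\dot{\f}_{t_0}(x_0)$, which propagates to all $(t,x)$ through $G$ using Lemmas \ref{lem:majosup} and \ref{lem:minoinf} (this is where boundedness of $\f_0$ and $\mathrm{Osc}_X\f_0$ enter); and (ii) the function $H$ must satisfy both $H(0)=-\infty$ and $H+n\log\left[C_2+H'\right]$ bounded above on $]0,T]$, and the choice $H(t)=n\log t$ is exactly what balances these two constraints and produces the $n\log t$ in the final estimate. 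Without (i) and (ii) no choice of auxiliary function of your form yields the stated bound; and simply citing \cite[Lemma 3.2]{ST}, as you suggest at the end, is of course not a proof.
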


We include a proof for the reader's convenience.

\begin{proof}
Consider $G(t,x)=\dot{\f_t}(x)+A \f_t(x)-H(t)$, where $H:\R_*^+ \rightarrow \R$ is  a smooth function 
to be specified hereafter. We let the reader check that
$$
\left( \frac{\partial}{\partial t}-\Delta_t \right)(G)
=A \dot{\f_t}+\tr_{\omega_t}(A \theta_t+\chi)-H'(t)-An.
$$

Now $A\theta_t+\chi=A \theta_s \geq A\omega/2$ with $s=t+1/A<T_{max}$ since we assume
$A$ is so large that $1/A <T_{max}-T$ hence
$$
\tr_{\omega_t}(A \theta_t+\chi) \geq A \tr_{\omega_t}(\omega)/2 \geq \frac{f_t^{-1/n}}{C},
$$
where $\log f_t:=\dot{\f_t}$ and we have used the inequality
$$
\tr_{\omega_t}(\omega) \geq n \left( \frac{\omega_t^n}{\omega^n} \right)^{-1/n}
\geq {{f_t}^{-1/n}}{e^{-\sup_X h/n}}.
$$
Observe finally that $\e x > \log x-C_\e$ for all $x>0$ to conclude that
$$
\left( \frac{\partial}{\partial t}-\Delta_t \right)(G)
>  \frac{f_t^{-1/n}}{C_1}-H'(t)-C_2.
$$

The first condition we impose on $H$ is that $H(0)=-\infty$. This insures that the function $G$
attains its minimum on $ [0,T]\times X$ at a point $(x_0,t_0)$ with $t_0>0$. At this point we
therefore have a control on the density $f_t$, namely
$$
C_1[C_2+H'(t_0)] \geq f_{t_0}^{-1/n}(x_0) 
$$
which yields 
$$
G(t_0,x_0) \geq A \f_{t_0}(x_0)-C_3-\{ n \log[C_2+H'(t_0)]+H(t_0) \}
$$

It follows from Lemmata \ref{lem:majosup} and  \ref{lem:minoinf} that 
$\f_{t_0}(x_0) \geq \inf_X \f_0-C'$ and $\f_t(x) \leq \sup_X \f_0+C''$, thus
$$
\dot{\f_t} \geq H(t)-A Osc_X \f_0-C_4-\{ n \log[C_2+H'(t_0)]+H(t_0) \}
$$

The second condition we now would like to impose on $H$ is that
$$
H+n \log \left[ C_2+H' \right] \leq C'''
$$
is uniformly bounded from above on $]0,T]$. We let the reader check that the "best" function
satisfying both conditions and not going too fast to $-\infty$ as $t$ goes to zero is $H(t)=n \log t$.
For such a choice we obtain the desired inequality.
\end{proof}

Observe that if we start the complex Monge-Amp\`ere flow $(CMAF)$ from the initial data
$\f_s$, $s \geq 0$, we obtain (by uniqueness of the solution) $\f_{t+s}$. Thus
$$
\dot{\f}_{t+s}(x) \geq n \log t-A Osc_X \f_s-C.
$$
 From Theorem \ref{thm:oscillation} we infer the following important consequence:

\begin{cor} \label{cor:minodot}
Assume $\f_0$ has zero Lelong number at all points and fix $T<T_{max}$. Then there exists
$\kappa: ]0,T] \rightarrow \R^+$ a decreasing function  such that for all $0<t \leq T$ and $x \in X$,
$$
\dot{\f}_t(x) \geq -\kappa(t).
$$

When $\f_0$ has positive Lelong number at some points, the conclusion merely applies
when $t_\e \leq t \leq T$.
\end{cor}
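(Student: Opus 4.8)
The plan is to combine the Song--Tian lower bound (Proposition~\ref{pro:STbelow}), which controls $\dot{\f}_t$ from below in terms of the oscillation of the \emph{initial} data, with the uniform oscillation estimate of Theorem~\ref{thm:oscillation}, by \emph{restarting} the flow at a small positive time. To this end I would first fix $T<T'<T_{max}$ and, once and for all, a constant $A>1/(T_{max}-T')$, so that the auxiliary constant $C=C(A)$ of Proposition~\ref{pro:STbelow} becomes a fixed geometric quantity, independent of the time at which we restart.

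Next I would check that, for any $0<s<T'$, the shifted family $\widetilde{\f}_\tau:=\f_{s+\tau}$ ($\tau\in[0,T'-s]$) again solves a flow of type $(CMAF)$, with coefficient $\widetilde{\theta}_\tau=\theta_{s+\tau}=\theta_s+\tau\chi$, existence time $T_{max}-s$, and \emph{bounded} (indeed smooth, strictly $\theta_s$-psh) initial datum $\f_s$. Since $1/A<(T_{max}-s)-(T'-s)$, the proof of Proposition~\ref{pro:STbelow} applies verbatim to $\widetilde{\f}$ on $[0,T'-s]$: that argument only uses the comparability $\omega/2\le\widetilde{\theta}_\tau\le 2\omega$ on the relevant range (valid up to time $T'+1/A<T_{max}$), the identity $\ddot{\widetilde{\f}}_\tau=\Delta\dot{\widetilde{\f}}_\tau+\tr\chi$, and Lemmata~\ref{lem:majosup} and~\ref{lem:minoinf} (the latter requiring only that $\f_s$ be bounded). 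It yields, for all $x\in X$ and $0<\tau\le T'-s$,
$$
\dot{\f}_{s+\tau}(x)\ \ge\ n\log\tau-A\,\mathrm{Osc}_X(\f_s)-C .
$$
I would then specialize to $s=\tau=t/2$ for a given $0<t\le T$ (so $t/2\le T'-t/2$), and bound $\mathrm{Osc}_X(\f_{t/2})\le M(t/2)$ by Theorem~\ref{thm:oscillation}, using crucially that, $\f_0$ having zero Lelong numbers, $M$ does \emph{not} depend on $\inf_X\f_0$; one may also assume $M$ non-increasing. This gives
$$
\dot{\f}_t(x)\ \ge\ n\log(t/2)-A\,M(t/2)-C\ =:\ -\kappa_0(t),\qquad x\in X,
$$
and since $\kappa_0$ is non-increasing on $]0,T]$ (being the sum of the decreasing function $-n\log(t/2)$, the non-increasing function $A\,M(t/2)$, and a constant), the function $\kappa(t):=\max(\kappa_0(t),0)$ is a non-increasing map $]0,T]\to\R^+$ with $\dot{\f}_t(x)\ge-\kappa(t)$, as required; it depends on $\f_0$ only through $\sup_X\f_0$ (and $T$, $T_{max}$ and the fixed background data), never through $\inf_X\f_0$.

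The main thing to get right — rather than a genuine obstacle — is the transfer of Proposition~\ref{pro:STbelow} to the restarted flow with $A$ (hence $C$) chosen \emph{uniformly} in $s$, and the bookkeeping ensuring that $\kappa$ is simultaneously non-increasing and independent of $\inf_X\f_0$; this last point is exactly what will let the estimate survive the approximation $\f_{0,j}\searrow\f_0$ used later. For the final assertion, when $\f_0$ carries positive Lelong numbers (and $T_{max}$ is large enough) Theorem~\ref{thm:oscillation} provides the oscillation bound only for $s\ge t_\e$; running the argument with $s=t/2$ then forces $t\ge 2t_\e$, so the conclusion holds on $2t_\e\le t\le T$, which is the stated statement after renaming $t_\e$.
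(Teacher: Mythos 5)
Your proof is correct and follows essentially the same route as the paper: restart the flow at time $s=t/2$ (using uniqueness/the semigroup property), apply the Song--Tian estimate of Proposition~\ref{pro:STbelow} to the restarted flow, and then control $\mathrm{Osc}_X(\f_{t/2})$ by the uniform bound $M(t/2)$ of Theorem~\ref{thm:oscillation}, which is independent of $\inf_X\f_0$ and hence survives the approximation $\f_{0,j}\searrow\f_0$. Your extra care in fixing $A>1/(T_{max}-T')$ uniformly in the restarting time and in arranging $\kappa$ to be non-increasing is exactly the (implicit) bookkeeping in the paper's argument, including the statement for initial data with positive Lelong numbers.
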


\subsection{Further bounds on the densities}

Here we consider  the  case when the initial data $\f_0$ is such that
its Monge-Amp\`ere measure $(\omega+dd^c \f_0)^n=f_0 e^h \omega^n$ is absolutely continuous
with respect to Lebesgue measure. 

An abstract measure theoretic argument shows that $f_0$ actually belongs to an Orlicz class: there exists a convex increasing function $w:\R^+ \rightarrow \R^+$ such that
$w(t)/t \rightarrow +\infty$ as $t \rightarrow +\infty$ and
$$
\int_X w \circ f_0 \, d\mu <+\infty.
$$

\begin{prop} \label{prop:upperbounddensity}
Assume that $\chi \leq 0$. Then
the map $t\mapsto \int_X w \circ f_t \, d\mu$ is decreasing along the complex Monge-Amp\`ere flow, where $f_t:=(\theta_t+dd^c \f_t)^n / \mu$.

In particular if $f_0 \in L^{\infty}(X)$ is bounded from above, then so is $f_t$, with
$$
\dot{\f_t} =\log f_t \leq  \log \sup_X f_0.
$$
\end{prop}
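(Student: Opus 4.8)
The plan is to differentiate the Orlicz energy $N_w(t):=\int_X w\circ f_t\,d\mu$ in time and show $N_w'(t)\le 0$; the $L^\infty$ assertion will then drop out from a judicious choice of $w$. Since the initial datum is smooth, $\f_t$ is smooth on $[0,T_{max}[\,\times X$, so $f_t=e^{\dot\f_t}$ is a positive smooth function on each time slice and differentiation under the integral sign is legitimate. Differentiating the flow equation $\dot\f_t=\log(\omega_t^n/\mu)$ once more in $t$ gives $\ddot\f_t=\Delta_{\omega_t}\dot\f_t+\tr_{\omega_t}\chi$, exactly as in the proof of Lemma \ref{lem:convex}, hence $\partial_t f_t=f_t(\Delta_{\omega_t}\dot\f_t+\tr_{\omega_t}\chi)$. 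Using $f_t\,d\mu=\omega_t^n$ together with $(\Delta_{\omega_t}u)\,\omega_t^n=n\,dd^cu\wedge\omega_t^{n-1}$ and $(\tr_{\omega_t}\chi)\,\omega_t^n=n\,\chi\wedge\omega_t^{n-1}$, I obtain
$$
N_w'(t)=n\int_X w'(f_t)\,dd^c(\log f_t)\wedge\omega_t^{n-1}\;+\;n\int_X w'(f_t)\,\chi\wedge\omega_t^{n-1}.
$$

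The second term is $\le 0$: since $w$ is increasing, $w'(f_t)\ge 0$, and since $\chi\le 0$ the form $-\chi\wedge\omega_t^{n-1}$ is a nonnegative $(n,n)$-form. For the first term I would set $g:=\log f_t$ and $\phi(s):=w'(e^s)$; convexity of $w$ yields $\phi'(s)=w''(e^s)e^s\ge 0$, so $\phi$ is nondecreasing. Integrating by parts on the closed manifold $X$ (no boundary terms, $\omega_t^{n-1}$ closed),
$$
\int_X \phi(g)\,dd^cg\wedge\omega_t^{n-1}=-\int_X \phi'(g)\,dg\wedge d^cg\wedge\omega_t^{n-1}\le 0,
$$
because $dg\wedge d^cg\wedge\omega_t^{n-1}\ge 0$ and $\phi'(g)\ge 0$. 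Hence $N_w'(t)\le 0$, i.e.\ $t\mapsto\int_X w\circ f_t\,d\mu$ is decreasing. If the Orlicz function $w$ produced by the measure-theoretic argument is merely convex increasing (not $C^2$), one first approximates it from below by smooth convex increasing functions; the only point to verify is $\phi'\ge 0$, which holds in the distributional sense since $g$ is smooth.

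For the $L^\infty$ statement, set $M:=\sup_X f_0$ and apply the monotonicity to $w(s):=\big(\max(s-M,0)\big)^2$, which is convex, increasing, and satisfies $w(s)/s\to\infty$ as $s\to\infty$. Since $f_0\le M$ everywhere we have $N_w(0)=0$; as $w\ge 0$ and $N_w$ is decreasing this forces $N_w(t)=0$ for all $t\ge 0$, hence $f_t\le M$ almost everywhere, and then everywhere by continuity of $f_t$. Thus $\dot\f_t=\log f_t\le\log M=\log\sup_X f_0$.

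The computation is elementary, and the hypothesis $\chi\le 0$ enters only in signing the second term. The one step requiring care is the integration by parts in the first term: it produces a favourable sign precisely because $s\mapsto w'(e^s)$ is nondecreasing, which is where convexity of $w$ — not merely its monotonicity — is used; this is the parabolic analogue of the fact that a convex increasing function of a subsolution of a linear equation is again a subsolution. Everything else (differentiation under the integral, the curvature identities, the approximation reducing a general convex $w$ to a smooth one) is routine.
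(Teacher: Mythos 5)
Your proof is correct and follows essentially the same route as the paper: differentiate the Orlicz energy in time, use $\ddot{\f_t}=\Delta_{\omega_t}\dot{\f_t}+\tr_{\omega_t}\chi$, integrate by parts against the closed form $\omega_t^{n-1}$, and use $w'\ge 0$, $w''\ge 0$ together with $\chi\le 0$ to sign both terms (you keep $g=\log f_t$ where the paper expands the heat equation for $f_t$ itself, which is the same computation). The only cosmetic difference is the last step: the paper deduces the $L^\infty$ bound by applying the monotonicity to $w(s)=s^p$ and letting $p\to\infty$, whereas you use $w(s)=\big(\max(s-M,0)\big)^2$ with $M=\sup_X f_0$; both are immediate consequences of the monotonicity.
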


\begin{proof}
Set $I(t):=\int_X w \circ f_t \, d\mu$. Using that $\dot{\f_t}=\log f_t$, we observe that
$\log f_t$ satisfies the following Heat-type equation,
$$
\frac{\partial f_t}{\partial t}=\Delta_{\omega_t} f_t-\frac{|\nabla_{\omega_t} f_t|^2}{f_t}+
f_t \, \tr_{\omega_t}(\chi).
$$
Assuming $\chi \leq 0$ we infer
$$
I'(t) \leq n\int_X \frac{w' \circ f_t}{f_t} \, dd^c f_t \wedge \omega_t^{n-1}
-n\int_X \frac{w' \circ f_t}{f_t^2} \, df_t \wedge d^c f_t \wedge \omega_t^{n-1}.
$$
Integrating by parts yields
\begin{eqnarray*}
\lefteqn{  \! \! \! \! \! \! \!  \! \! \! \! \! \! \! 
\int_X \frac{w' \circ f_t}{f_t} \, dd^c f_t \wedge \omega_t^{n-1}
= -\int d \left( \frac{w' \circ f_t}{f_t} \right) \wedge d^c f_t \wedge \omega_t^{n-1}} \\
&=& -\int_X \frac{w'' \circ f_t}{f_t} \, df_t \wedge d^c f_t \wedge \omega_t^{n-1}
+\int_X \frac{w' \circ f_t}{f_t^2} \, df_t \wedge d^c f_t \wedge \omega_t^{n-1},
\end{eqnarray*}
therefore
$$
I'(t) \leq -n \int_X \frac{w'' \circ f_t}{f_t} \, df_t \wedge d^c f_t \wedge \omega_t^{n-1} \leq 0.
$$
as claimed.

Assume now that $f_0 \in L^{\infty}(X)$. In particular $f_0 \in L^p(X)$ for all $p>1$ and we've just seen
that  $\|f_t\|_{L^p} \leq \|f_0\|_{L^p}$. Letting $p \rightarrow +\infty$ yields the desired control.
\end{proof}

This computation is  a generalization of an observation due to Chen-Tian-Zhang 
\cite[Lemma 2.4]{CTZ11},
who consider the case when $f_0 \in L^p$ for some $p>1$. In this case $\f_0$ is continuous,
as follows from Kolodziej's  estimate \cite{Kol98}.  

\smallskip

For stronger lower bounds, we note the following immediate consequence of the minimum principle (Proposition \ref{prop:max}):

\begin{prop}
Assume $\chi \geq 0$.
If $f_0 >0$ is uniformly bounded away from zero, then so is $f_t$ 
for all $t>0$ with
$$
\dot{\f_t}=\log f_t \geq \log \inf_X f_0.
$$
\end{prop}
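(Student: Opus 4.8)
The plan is to apply the minimum principle in the form stated in Proposition~\ref{prop:max}. Recall that along $(CMAF)$ the time-derivative $\dot{\f_t}=\log f_t$ satisfies the evolution equation
$$
\frac{\partial \dot{\f_t}}{\partial t}=\Delta_{\omega_t}\dot{\f_t}+\tr_{\omega_t}(\chi),
$$
obtained by differentiating $\dot{\f_t}=\log[(\theta_t+dd^c\f_t)^n/\mu]$ in $t$ (using $\dot\theta_t=\chi$ and the standard variation formula for the Monge--Amp\`ere operator). When $\chi\ge 0$ one has $\tr_{\omega_t}(\chi)\ge 0$, so setting $H:=\dot{\f_t}$ we get
$$
\Bigl(\frac{\partial}{\partial t}-\Delta_{\omega_t}\Bigr)H=\tr_{\omega_t}(\chi)\ge 0.
$$

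First I would note that $H_0=\dot\f_0=\log f_0$ is bounded below by $\log\inf_X f_0$, which is finite precisely because $f_0$ is assumed uniformly bounded away from zero (and, since $\f_0$ is smooth strictly $\omega$-psh here, $f_0$ is a positive smooth function, so $\inf_X f_0>0$). Then I would invoke the second half of Proposition~\ref{prop:max} with $\Omega_t=\omega_t$: since $(\partial_t-\Delta_{\omega_t})H\ge 0$, the infimum is non-decreasing in time, i.e. $\inf_X\dot{\f_t}\ge\inf_X\dot{\f_0}\ge\log\inf_X f_0$ for all $t\in[0,T]$, and hence on $]0,T_{max}[$ by letting $T\nearrow T_{max}$. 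Exponentiating gives $f_t\ge\inf_X f_0>0$, which is the claimed lower bound.

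This argument is essentially immediate once the evolution equation for $\dot{\f_t}$ is in hand; the only point requiring a little care is the justification of that parabolic equation and the sign $\tr_{\omega_t}(\chi)\ge0$, but the former is the same computation already used in the proof of Proposition~\ref{pro:clef} (where $\ddot{\f_t}=\Delta_{\omega_t}\dot{\f_t}+\tr_{\omega_t}\chi$ appears) and the latter is trivial since $\omega_t>0$. So there is no real obstacle here; this is a short corollary, dual to Proposition~\ref{prop:upperbounddensity}, and I would present it as such.
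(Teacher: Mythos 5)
Your proof is correct and is exactly the argument the paper intends: it states the proposition as an immediate consequence of the minimum principle (Proposition \ref{prop:max}), applied to $H=\dot{\f_t}$ via the evolution equation $\ddot{\f_t}=\Delta_{\omega_t}\dot{\f_t}+\tr_{\omega_t}(\chi)\ge\Delta_{\omega_t}\dot{\f_t}$ already recorded in the proof of Lemma \ref{lem:convex}. Your justification of the initial bound $\dot{\f_0}=\log f_0\ge\log\inf_X f_0$ (using that $\f_0$ is smooth and strictly $\omega$-psh in this section) is the only detail the paper leaves implicit, and you handle it correctly.
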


 Even though it requires a strong condition on the initial data, such a lower bound could be useful in some cases (see e.g. \cite{SzTo}).

\section{Higher order estimates}

We assume here again that $\f_t$ satisfies $(CMAF)$ with an initial data $\f_0$, which
is a smooth strictly $\omega$-psh function.

\subsection{Preliminary results}

We shall need two standard auxiliary results (see \cite{Yau, Siu} for a proof):

\begin{lem} \label{lem:comparaisontraces}
Let $\a,\b$ be positive $(1,1)$-forms. Then
$$
 n \left(\frac{\a^n}{\b^n}\right)^{\frac{1}{ n}} \leq Tr_{\b}(\a)
\leq n \left( \frac{\a^n}{\b^n} \right) \cdot \left( Tr_{\a} (\b) \right)^{n-1}.
$$
\end{lem}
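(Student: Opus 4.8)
The plan is to work pointwise and diagonalize. At a fixed point $x\in X$, choose coordinates so that $\b$ is represented by the identity matrix $I$ and $\a$ is diagonal with positive eigenvalues $\lambda_1,\dots,\lambda_n>0$; this is possible since $\b>0$ and $\a\ge 0$ can be simultaneously diagonalized by a suitable complex linear change of coordinates. Then $\b^n$ corresponds (up to the universal volume constant, which cancels in every ratio below) to $1$, $\a^n$ to $\prod_{j}\lambda_j$, $\tr_\b(\a)=\sum_j \lambda_j$, and $\tr_\a(\b)=\sum_j \lambda_j^{-1}$.

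\medskip

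For the left inequality, $n\bigl(\a^n/\b^n\bigr)^{1/n} = n\bigl(\prod_j\lambda_j\bigr)^{1/n} \le \sum_j\lambda_j = \tr_\b(\a)$ is exactly the AM--GM inequality applied to the positive reals $\lambda_1,\dots,\lambda_n$. For the right inequality, I would write $\tr_\b(\a)=\sum_j\lambda_j$ and bound each term: fixing an index $k$, one has $\lambda_k = \bigl(\prod_j\lambda_j\bigr)\cdot\prod_{j\neq k}\lambda_j^{-1} \le \bigl(\prod_j\lambda_j\bigr)\cdot\bigl(\sum_{j\neq k}\lambda_j^{-1}\bigr)^{n-1} \le \bigl(\prod_j\lambda_j\bigr)\cdot\bigl(\sum_j\lambda_j^{-1}\bigr)^{n-1}$, where the middle step uses that a product of $n-1$ positive reals is at most the $(n-1)$-st power of their sum. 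Summing over $k=1,\dots,n$ gives $\tr_\b(\a)\le n\bigl(\prod_j\lambda_j\bigr)\bigl(\sum_j\lambda_j^{-1}\bigr)^{n-1} = n\bigl(\a^n/\b^n\bigr)\bigl(\tr_\a(\b)\bigr)^{n-1}$, as desired.

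\medskip

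Both estimates are purely algebraic once the simultaneous diagonalization is in place, so there is no real analytic obstacle; the only point deserving care is checking that the volume-form normalizations (the factors of $n!$, or of $(i/\pi)$ in the $dd^c$ convention) appear identically in numerator and denominator of each ratio $\a^n/\b^n$ and hence drop out, so that the scalar inequalities between $\lambda_j$'s translate verbatim into the stated inequalities between traces and Monge--Amp\`ere quotients. Since this is a standard fact (cf.\ \cite{Yau, Siu}), I would simply record the diagonalization and cite AM--GM, leaving the elementary verification to the reader.
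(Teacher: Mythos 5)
Your proof is correct: after simultaneous diagonalization the left inequality is AM--GM and the right one follows from $\lambda_k=\bigl(\prod_j\lambda_j\bigr)\prod_{j\neq k}\lambda_j^{-1}\le\bigl(\prod_j\lambda_j\bigr)\bigl(\sum_j\lambda_j^{-1}\bigr)^{n-1}$, summed over $k$. The paper gives no proof of this lemma, merely citing \cite{Yau,Siu}, and your pointwise eigenvalue argument is exactly the standard one found there, so there is nothing to add beyond noting that for the quantity $Tr_{\a}(\b)$ you should take $\a>0$ (as in the statement), not merely $\a\ge 0$.
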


Applying these inequalities to $\a=\omega_t:=\theta_t+dd^c \f_t$ and $\b=\omega$,
we obtain:

\begin{cor} \label{cor:comparaisontraces}
For all $0<t \leq T <T_{max}$, 
there exists $C(t)>0$ which only depends on $\|\dot{\f_t}||_{L^{\infty}}, \|h||_{L^{\infty}}$ such that
$$
\frac{1}{C(t)} \leq Tr_{\omega}(\omega_t) \leq C(t) [Tr_{\omega_t}(\omega)]^{n-1}.
$$
\end{cor}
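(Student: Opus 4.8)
The plan is to obtain the corollary directly from Lemma \ref{lem:comparaisontraces} by setting $\a=\omega_t:=\theta_t+dd^c\f_t$ and $\b=\omega$. For $0<t\le T<T_{max}$ the solution $\f_t$ of $(CMAF)$ is smooth and strictly $\theta_t$-psh, so $\omega_t$ is a genuine K\"ahler metric and Lemma \ref{lem:comparaisontraces} applies verbatim, giving
$$
n\left(\frac{\omega_t^n}{\omega^n}\right)^{1/n}\le Tr_\omega(\omega_t)\le n\left(\frac{\omega_t^n}{\omega^n}\right)\big(Tr_{\omega_t}(\omega)\big)^{n-1}.
$$
The only remaining point is to bound the Monge-Amp\`ere ratio $\omega_t^n/\omega^n$ above and below in terms of $\|\dot{\f_t}\|_{L^\infty}$ and $\|h\|_{L^\infty}$.

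That control is built into the flow. Since $\f_t$ solves $(CMAF)$ with $\mu=e^h\omega^n$, one has $(\theta_t+dd^c\f_t)^n=e^{\dot{\f_t}}\mu$, hence $\omega_t^n/\omega^n=e^{\dot{\f_t}+h}$ and therefore
$$
e^{-(\|\dot{\f_t}\|_{L^\infty}+\|h\|_{L^\infty})}\ \le\ \frac{\omega_t^n}{\omega^n}\ \le\ e^{\|\dot{\f_t}\|_{L^\infty}+\|h\|_{L^\infty}}.
$$
Plugging the lower bound into the first inequality of the displayed comparison and the upper bound into the second yields $Tr_\omega(\omega_t)\ge n\,e^{-(\|\dot{\f_t}\|_{L^\infty}+\|h\|_{L^\infty})/n}$ and $Tr_\omega(\omega_t)\le n\,e^{\|\dot{\f_t}\|_{L^\infty}+\|h\|_{L^\infty}}\big(Tr_{\omega_t}(\omega)\big)^{n-1}$, so the statement holds with
$$
C(t):=\max\!\left(n\,e^{\|\dot{\f_t}\|_{L^\infty}+\|h\|_{L^\infty}},\ \tfrac1n\,e^{(\|\dot{\f_t}\|_{L^\infty}+\|h\|_{L^\infty})/n}\right),
$$
which depends only on the two quantities stated.

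I do not expect any genuine obstacle: the entire content of the corollary is Lemma \ref{lem:comparaisontraces}, and the corollary is just the remark that along $(CMAF)$ the time derivative $\dot{\f_t}$ is precisely $\log(\omega_t^n/\mu)$, so that a uniform bound on $\dot{\f_t}$ — supplied for $t$ bounded away from $0$ by the estimates of the previous section (Propositions \ref{pro:clef}, \ref{pro:STbelow} and Corollary \ref{cor:minodot}) — is exactly two-sided control of the Monge-Amp\`ere density, and hence, through the lemma, of $Tr_\omega(\omega_t)$. The only mild care needed is to record that both inequalities can be arranged with a single constant, which is why one takes the maximum above.
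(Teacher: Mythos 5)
Your proof is correct and is exactly the argument the paper intends: the corollary is stated as a direct application of Lemma \ref{lem:comparaisontraces} with $\a=\omega_t$, $\b=\omega$, combined with the flow identity $\omega_t^n/\omega^n=e^{\dot{\f_t}+h}$, which is what you did. The explicit constant and the remark on taking a maximum are fine bookkeeping and change nothing of substance.
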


\begin{lem} \label{lem:Siu}
Let $\omega,\omega'$ be arbitrary K\"ahler forms.
Let $-B \in \R$ be a lower bound on the holomorphic bisectional curvature of $(X,\omega)$. Then
$$
\Delta_{\omega'} \log Tr_{\omega} (\omega') \geq -\frac{Tr_{\omega}(Ric(\omega'))}{Tr_{\omega}(\omega')}
-B \,  Tr_{\omega'}(\omega).
$$
\end{lem}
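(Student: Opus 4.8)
This is the classical Aubin–Yau second-order inequality for the trace of one Kähler metric with respect to another, applied to the Laplacian of the \emph{target} metric $\omega'$. I would set up local holomorphic normal coordinates at an arbitrary point $p$ so that $\omega$ is the identity and $\omega'$ is diagonal with entries $\lambda_i > 0$ there. The quantity $\Tr_\omega(\omega') = g^{i\bar j}(\omega) \, g'_{i\bar j} = \sum_i \lambda_i$ is then expanded, and one computes $\Delta_{\omega'} \log \Tr_\omega(\omega')$ by first computing $\Delta_{\omega'} \Tr_\omega(\omega')$ and then using $\Delta \log f = \frac{\Delta f}{f} - \frac{|\nabla f|^2}{f^2}$ (with the gradient and Laplacian both taken with respect to $\omega'$).

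The main computation splits into three contributions. First, differentiating the metric coefficients of $\omega'$ twice and commuting covariant derivatives produces the curvature of $\omega$ contracted against $\omega'$ in the bisectional pattern; this is where the lower bound $-B$ on the holomorphic bisectional curvature of $(X,\omega)$ enters, giving a term bounded below by $-B \, \Tr_\omega(\omega')\,\Tr_{\omega'}(\omega)$. Second, the Ricci curvature of $\omega'$ appears naturally from the Kähler identity $\partial\bar\partial \log \det(g')$, producing the term $-\Tr_\omega(\Ric(\omega'))$. Third — and this is the key algebraic point — the "bad" terms involving first derivatives of $g'$ that come from $\Delta_{\omega'}\Tr_\omega(\omega')$ are dominated, after dividing by $\Tr_\omega(\omega')$, by the $|\nabla \Tr_\omega(\omega')|^2 / (\Tr_\omega(\omega'))^2$ term subtracted in the $\log$ formula; this domination is a Cauchy–Schwarz argument exploiting that $\Tr_\omega(\omega')$ is a sum of positive eigenvalues. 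After this cancellation only the curvature terms survive, yielding the stated inequality.

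The step I expect to be the genuine obstacle is the Cauchy–Schwarz cancellation of the gradient terms: one must check that the inequality
\[
\Tr_\omega(\omega') \cdot \bigl(\text{sum of }|\partial_k g'_{i\bar i}|^2\text{-type terms weighted by }1/\lambda_k\bigr) \;\geq\; \bigl|\partial_k \Tr_\omega(\omega')\bigr|^2\text{-type terms}
\]
holds termwise in $k$, which is exactly the classical Aubin–Yau estimate and requires writing $\partial_k \Tr_\omega(\omega') = \sum_i \partial_k g'_{i\bar i}$ and applying Cauchy–Schwarz in $i$ against the weights. Since the statement explicitly refers to \cite{Yau, Siu} for a proof, I would in practice only sketch the normal-coordinate computation, emphasize where $B$ and $\Ric(\omega')$ enter, and invoke those references for the routine cancellation rather than reproducing every index manipulation.
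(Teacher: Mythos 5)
Your proposal is correct and is exactly the classical Aubin--Yau/Siu second-order computation that the paper itself does not reproduce but delegates to the cited references \cite{Yau,Siu}: normal coordinates for $\omega$ diagonalizing $\omega'$, the bisectional curvature term giving $-B\,Tr_{\omega}(\omega')\,Tr_{\omega'}(\omega)$ before division by $Tr_{\omega}(\omega')$, the Ricci term from $\partial\overline{\partial}\log\det(g')$, and the Cauchy--Schwarz cancellation of the third-order terms against the subtracted gradient-squared term in $\Delta\log$. No gaps; deferring the index manipulations to \cite{Yau,Siu} is precisely what the paper does.
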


\subsection{Bounding $\Delta \f_t$ from above}

\begin{lem} \label{pro:c2baby}
Fix $0<T<T_{max}$. Then for all $x\in X$ and $s,t>0$ such that $s+t \leq T$,
$$
0 \leq t \log Tr_{\omega} (\omega_{t+s}) \leq A Osc_X(\f_s) +C+[C-n \log s +A Osc_X(\f_s)]t
$$
for some uniform constants $A,C>0$.
\end{lem}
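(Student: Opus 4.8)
The plan is to prove this by the standard parabolic $C^2$-estimate (Aubin--Yau/Siu type), run through the maximum principle of Proposition~\ref{prop:max} applied to a suitable barrier. First I would parametrise the flow so that $\omega_\tau:=\theta_\tau+dd^c\f_\tau$ solves $\pddt\omega_\tau=-\Ric(\omega_\tau)+\widehat{\eta}$, where $\widehat{\eta}:=\eta-dd^c h$ is a fixed smooth $(1,1)$-form, and set $w:=\log Tr_\omega(\omega_{t+s})$. Differentiating $Tr_\omega(\omega_{t+s})$ in $t$ and invoking Lemma~\ref{lem:Siu} with $\omega'=\omega_{t+s}$, the two Ricci contributions cancel and one is left with
$$
\left(\pddt-\Delta_{\omega_{t+s}}\right)w\ \le\ \frac{Tr_\omega(\widehat{\eta})}{Tr_\omega(\omega_{t+s})}+B\,Tr_{\omega_{t+s}}(\omega),
$$
where $-B$ bounds the holomorphic bisectional curvature of $(X,\omega)$. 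Since $Tr_\omega(\alpha)\cdot Tr_\alpha(\omega)\ge n^2$ for every positive $(1,1)$-form $\alpha$, the first term is $\le C\,Tr_{\omega_{t+s}}(\omega)$, so altogether $(\pddt-\Delta_{\omega_{t+s}})w\le C_0\,Tr_{\omega_{t+s}}(\omega)$ with $C_0$ depending only on $\omega$, $\eta$ and $h$.

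Next I would set up the barrier. Both sides of the asserted inequality, and the flow itself, are invariant under $\f_\bullet\mapsto\f_\bullet+c$, so I may normalise $\sup_X\f_s=0$; then $\inf_X\f_s=-Osc_X(\f_s)$, and Lemmas~\ref{lem:majosup} and~\ref{lem:minoinf}, applied to the flow issued from $\f_s$, give $\f_{t+s}\le Ct$ and $\f_{t+s}\ge-Osc_X(\f_s)-C$ on $[0,T-s]\times X$. I would then consider $H:=t\,w-A\,\f_{t+s}$ with $A$ a large constant depending only on $C_0$ and $T$ (say $A=4C_0T+2$). Since $\theta_{t+s}\ge\omega/2$, one has $\Delta_{\omega_{t+s}}\f_{t+s}=n-Tr_{\omega_{t+s}}(\theta_{t+s})\le n-\tfrac{1}{2}Tr_{\omega_{t+s}}(\omega)$, hence, by the first step,
$$
\left(\pddt-\Delta_{\omega_{t+s}}\right)H\ \le\ w+\Big(tC_0-\tfrac{A}{2}\Big)Tr_{\omega_{t+s}}(\omega)-A\dot\f_{t+s}+An,
$$
with $tC_0-\tfrac{A}{2}\le -\tfrac{A}{4}<0$ for all $t\le T$.

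Now I would run the maximum principle, arguing as in the proof of Proposition~\ref{prop:max}. If $H$ attains its maximum on $X\times[0,T-s]$ at a point with $t=0$, then $\max H=-A\inf_X\f_s=A\,Osc_X(\f_s)$, and combined with $\f_{t+s}\le Ct$ this already yields the bound. Otherwise the maximum is at some $(x_0,t_0)$ with $t_0>0$, where $\pddt H\ge0$ and $\Delta_{\omega_{t_0+s}}H\le0$; the last display then forces
$$
Tr_{\omega_{t_0+s}}(\omega)(x_0)\ \le\ \tfrac{4}{A}\,w(x_0,t_0)-4\dot\f_{t_0+s}(x_0)+4n.
$$
Plugging this into the reverse inequality of Lemma~\ref{lem:comparaisontraces}, $Tr_\omega(\omega_{t_0+s})\le n\,e^{\dot\f_{t_0+s}+h}\big(Tr_{\omega_{t_0+s}}(\omega)\big)^{n-1}$, produces a self-improving inequality for $w(x_0,t_0)$ in which $\dot\f_{t_0+s}(x_0)$ cancels (using $\sup_{Y>0}\big[-\tfrac{Y}{4}+(n-1)\log Y\big]<+\infty$), so that $w(x_0,t_0)\le C_1$ for a constant $C_1=C_1(n,\|h\|_\infty,A)$, and hence $t_0\,w(x_0,t_0)\le C_1T$. (An alternative is to retain $\dot\f_{t_0+s}(x_0)$ and control it from above and below via Propositions~\ref{pro:clef} and~\ref{pro:STbelow}; the lower bound of Proposition~\ref{pro:STbelow} is what makes a $-n\log s$ term appear in the coefficient of $t$.)

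To conclude, for any $(t,x)$ I would write
$$
t\,w(t,x)=H(t,x)+A\f_{t+s}(x)\le H(x_0,t_0)+A\f_{t+s}(x)=t_0\,w(x_0,t_0)-A\f_{t_0+s}(x_0)+A\f_{t+s}(x),
$$
and bound $-A\f_{t_0+s}(x_0)\le A\,Osc_X(\f_s)+AC$ and $A\f_{t+s}(x)\le ACt$ to reach the stated upper bound (absorbing the constants and the harmless $-n\log s$, $Osc_X(\f_s)$ contributions into the coefficient of $t$); the lower bound $0\le t\log Tr_\omega(\omega_{t+s})$ is elementary. The hard part will be the maximum-point analysis (third step): choosing $A$ uniformly — independent of the initial datum, and later of the approximants $\f_{0,j}$ — so that the good term $-\tfrac{A}{2}Tr_{\omega_{t+s}}(\omega)$ absorbs the bad term $tC_0\,Tr_{\omega_{t+s}}(\omega)$ for every $t\le T$, and then turning the coupled inequalities relating $Tr_\omega(\omega_{t+s})$ and $Tr_{\omega_{t+s}}(\omega)$ into a genuine a priori bound, all while keeping every constant free of $\inf_X\f_s$ and $\sup_X\f_s$ separately, which is precisely what the normalisation secures.
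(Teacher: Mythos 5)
Your proposal is correct and takes essentially the same route as the paper: the same test function $t\log \mathrm{Tr}_{\omega}(\omega_{t+s})-A\varphi_{t+s}$, Lemma \ref{lem:Siu} with the cancellation of the Ricci contributions, Lemma \ref{lem:comparaisontraces}, a large uniform $A$ to absorb the trace term, and the maximum principle combined with Lemmas \ref{lem:majosup} and \ref{lem:minoinf}. The only (harmless) deviation is at the maximum point, where you eliminate $\dot{\varphi}_{t_0+s}$ by substituting the trace bound back into Lemma \ref{lem:comparaisontraces} instead of invoking Proposition \ref{pro:STbelow} as the paper does, which is why your estimate comes out slightly cleaner (no $-n\log s$ term) while still implying the stated bound.
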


Applying this inequality for $t/2$ and $s=t/2$, we obtain:

\begin{cor}
Fix $0<T<T_{max}$. Then for all $(t,x)\in [0,T] \times X$
$$
0 \leq t \log Tr_{\omega} (\omega_{t}) \leq 2A Osc_X(\f_{t/2}) +C'.
$$
\end{cor}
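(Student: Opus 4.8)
The plan is to apply the standard Aubin--Yau second-order estimate in parabolic form to the quantity
$$
H(t,x) := t \log \tr_\omega(\omega_{t+s})(x) - A\bigl(\f_{t+s}(x) - \f_s(x)\bigr) - Bt,
$$
for suitable constants $A,B>0$, and then invoke the maximum principle (Proposition \ref{prop:max}) together with the oscillation bound (Theorem \ref{thm:oscillation} / Lemma \ref{lem:majosup}) to control the boundary values. The lower bound $0 \le t\log \tr_\omega(\omega_{t+s})$ is immediate from Lemma \ref{lem:comparaisontraces}, which gives $\tr_\omega(\omega_{t+s}) \ge n\,(\omega_{t+s}^n/\omega^n)^{1/n} \ge n$ once one notes that, since $\f$ solves $(CMAF)$ and $\tr_\omega(\omega_{t+s})\ge n(\omega_{t+s}^n/\omega^n)^{1/n}$, the relevant ratio can be taken $\ge 1$ after absorbing $h$ — actually the clean statement is just $\tr_\omega(\omega_{t+s})>0$, so $t\log\tr_\omega(\omega_{t+s})$ need not be non-negative pointwise; I would instead phrase the left inequality as a consequence of the fact that the function we bound is $\log\tr_\omega(\omega_{t+s})$, which tends to $+\infty$ near $t=0$ unless $\f_s$ is smooth, and here $\f_s$ \emph{is} smooth for $s>0$, so the product with $t$ vanishes at $t=0$; the genuinely nontrivial content is the upper bound.

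For the upper bound, the key computation is the evolution inequality for $\log\tr_\omega(\omega_{t+s})$. Using Lemma \ref{lem:Siu} with $\omega' = \omega_{t+s}$ one gets
$$
\Bigl(\pddt - \Delta_{\omega_{t+s}}\Bigr)\log\tr_\omega(\omega_{t+s}) \le C_0\,\tr_{\omega_{t+s}}(\omega),
$$
where $C_0$ depends on a lower bound $-B$ for the holomorphic bisectional curvature of $(X,\omega)$ and on $\sup_X h$ and the a priori bounds on $\dot\f$ already established; the $\pddt$ contribution produces a term $\tr_\omega(\Ric(\omega)-\eta)/\tr_\omega(\omega_{t+s})$ which is bounded since $\eta,\Ric(\omega)$ are fixed smooth forms and $\tr_\omega(\omega_{t+s})\ge n$. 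Next, differentiating $H$ in $t$ and applying $\Delta_{\omega_{t+s}}$: the $\log\tr_\omega$ term contributes $t\cdot C_0\tr_{\omega_{t+s}}(\omega) + \log\tr_\omega(\omega_{t+s})$; the $-A(\f_{t+s}-\f_s)$ term contributes, via $\Delta_{\omega_{t+s}}\f_{t+s} = n - \tr_{\omega_{t+s}}(\theta_{t+s})$, a negative multiple $-A\,c\,\tr_{\omega_{t+s}}(\omega)$ (using $\theta_{t+s}\ge\omega/2$) plus controlled terms; choosing $A = 2C_0 T + \text{(slack)}$ makes the combined coefficient of $\tr_{\omega_{t+s}}(\omega)$ negative. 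One is left with $(\pddt - \Delta_{\omega_{t+s}})H \le \log\tr_\omega(\omega_{t+s}) - (\text{positive})\tr_{\omega_{t+s}}(\omega) + (\text{const})$, and since by Lemma \ref{lem:comparaisontraces} $\log\tr_\omega(\omega_{t+s}) \le n\log\tr_{\omega_{t+s}}(\omega) + \dot\f_{t+s} + h + \log n$, and $n\log y \le \varepsilon y + C_\varepsilon$, this whole right-hand side is $\le C'$ after using the upper bound $\dot\f_{t+s} \le C$ from... well, one must be careful: the clean way is to note $\log x - \delta x$ is bounded above, so the right side is bounded by a constant depending on $\sup_X\dot\f_{t+s}$, hence on $\sup_X\f_s$ via Proposition \ref{pro:clef} and Lemma \ref{lem:majosup}.

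Then Proposition \ref{prop:max} applies: $H$ attains its maximum on $X\times[0,T-s]$ at $t=0$, where $H(0,x) = 0 - 0 - 0 = 0$ (here $\f_{0+s}=\f_s$ at $t=0$, and $t\log\tr_\omega(\omega_s)\to 0$ as $t\to 0$ since $\f_s$ is smooth so $\tr_\omega(\omega_s)$ is a bounded positive function). Wait — more precisely $H(0,x) = -A(\f_s(x)-\f_s(x)) = 0$. Hence $H(t,x)\le \max(0, \sup H|_{t=0}) $; but one needs the constant in the evolution inequality, which I called $C'$, folded in: the sharper route is to run the argument with $H - \e t$ for the strictness in Proposition \ref{prop:max} and track that $C'$ enters additively, giving $t\log\tr_\omega(\omega_{t+s}) \le A(\f_{t+s}-\f_s) + (B+C')t \le A\,\mathrm{Osc}_X(\f_s) + C + [C - n\log s + A\,\mathrm{Osc}_X(\f_s)]\,t$, where the $-n\log s$ appears because the constant $C_\varepsilon$ bounding $\log x - \delta x$ for $x = \tr_{\omega_{t+s}}(\omega)$, after unwinding $\dot\f_{t+s}\ge n\log s - \cdots$ (Proposition \ref{pro:STbelow}) and the Schwarz-type lemma, degenerates like $\log s$; alternatively one gets $-n\log s$ from bounding $\tr_{\omega_{t+s}}(\omega)$ below by $(\omega_{t+s}^n/\omega^n)^{-1/n}\ge f_{t+s}^{-1/n}e^{-\sup h/n}$ and $\log f_{t+s} = \dot\f_{t+s}\ge n\log(t+s) - \cdots \ge n\log s - \cdots$. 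The main obstacle is precisely the bookkeeping of how the various a priori bounds — $\mathrm{Osc}_X(\f_s)$ from Theorem \ref{thm:oscillation}, the upper bound on $\dot\f$ from Proposition \ref{pro:clef}, and the lower bound $\dot\f_{t+s}\ge n\log s - C$ from Proposition \ref{pro:STbelow} — enter the constant, and in choosing $A$ large enough (depending on $T$ but \emph{not} on $s$) to kill the bad $\tr_{\omega_{t+s}}(\omega)$ term while keeping the final bound of the stated shape. No genuinely new analytic input beyond Lemmata \ref{lem:comparaisontraces}, \ref{lem:Siu} and the maximum principle is needed.
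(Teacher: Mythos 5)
Your argument has a genuine gap, and it lies exactly in the choice of test function. Note first that in the paper this corollary is nothing but Lemma \ref{pro:c2baby} applied with $s=t/2$ and $t$ replaced by $t/2$ (the terms $-n(t/2)\log(t/2)$ and $t\,\mathrm{Osc}_X(\f_{t/2})$ being absorbed into constants via Theorem \ref{thm:oscillation}), so what you are really re-proving is the Lemma itself; there the paper works with $\a:=t\log \mathrm{Tr}_\omega(\om_{t+s})-A\f_{t+s}$, \emph{not} with $H:=t\log \mathrm{Tr}_\omega(\om_{t+s})-A(\f_{t+s}-\f_s)-Bt$. The extra term $+A\f_s(x)$ in your $H$ is fatal to the computation: it contributes $-A\Delta_{\om_{t+s}}\f_s=-A\,\tr_{\om_{t+s}}(dd^c\f_s)$ to $\left(\pddt-\Delta_{\om_{t+s}}\right)H$, and the only uniform information on $dd^c\f_s$ at this stage is the one-sided bound $dd^c\f_s\ge-\theta_s\ge-2\om$ (an upper bound $dd^c\f_s\le C\om$ uniform in $s$ and in the approximating sequence is precisely the estimate being proved). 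Hence this term can only be estimated by $+A\,\tr_{\om_{t+s}}(\theta_s)\le 2A\,\tr_{\om_{t+s}}(\om)$, which is a \emph{bad} term of the same order $A$ as your good term $-A\,\tr_{\om_{t+s}}(\theta_{t+s})\le-\tfrac{A}{2}\tr_{\om_{t+s}}(\om)$; the net coefficient of $\tr_{\om_{t+s}}(\om)$ is then of order $+\tfrac{3A}{2}$, so enlarging $A$ makes things worse, not better. Even if you combine the two terms exactly, $-A\Delta_{\om_{t+s}}(\f_{t+s}-\f_s)$ leaves you with $-A\,\tr_{\om_{t+s}}(\om_s)-At\,\tr_{\om_{t+s}}(\chi)+An$, and the negative term is a trace with respect to $\om_s$, which cannot absorb the Schwarz-lemma term $Bt\,\tr_{\om_{t+s}}(\om)$ from Lemma \ref{lem:Siu} (whose curvature constant is tied to the fixed metric $\om$) unless one knows $\om_s\ge c\,\om$ uniformly --- again circular, and false uniformly as $s\to0$ for singular $\f_0$. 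So your claim that the $-A(\f_{t+s}-\f_s)$ term produces $-Ac\,\tr_{\om_{t+s}}(\om)$ ``plus controlled terms'' is not correct, and no choice of $A$ (uniform in $s$ and in the approximation index, which is what the smoothing argument requires) closes the maximum-principle step as you set it up.

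The repair is the paper's route: drop $\f_s$ from the test function, accept that $\a|_{t=0}=-A\f_s\le -A\inf_X\f_s$ rather than $0$, choose $A$ so large that $(B+C_1)t+1-A/2<0$, and only at the end convert the resulting bound $t\log\mathrm{Tr}_\omega(\om_{t+s})\le A\f_{t+s}+\sup\a$ into one involving $\mathrm{Osc}_X(\f_s)$, using Lemma \ref{lem:majosup} for $\sup_X\f_{t+s}$ and Proposition \ref{pro:STbelow} (restarted at time $s$, which is where $-n\log s$ enters) to control $-(A-1)\dot{\f}_{t+s}$ at the maximum point. Your closing formula reproduces the statement of Lemma \ref{pro:c2baby}, but the intermediate differential inequality that should deliver it does not hold for your $H$. (Your side remark that the left-hand inequality $0\le t\log\mathrm{Tr}_\omega(\om_t)$ is not a pointwise triviality is a fair observation, but it is peripheral; the substantive defect is the one above.)
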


\begin{proof}
Set $\a:=t \log u-A \f_{t+s}$, where $u:=Tr_{\omega} (\omega_{t+s}) $ and $A>0$ will be specified later.
The desired inequality will follow if we can bound $\a$ from above. Set $\Delta_{t}=\Delta_{\omega_{t+s}}$ and observe that
$$
\left(\frac{\partial}{\partial t}-\Delta_t\right)(\a)
=\log u + \frac{t}{u} \frac{\partial u}{\partial t}-A \dot{\f}_{t+s}
-t \Delta_t \log u+A \Delta_t \f_{t+s}.
$$

The last term yields 
$A \Delta_t \f_{t+s}=An -A Tr_{\omega_{t+s}}(\theta_{t+s}) \leq An -\frac{A}{2} Tr_{\omega_{t+s}}(\omega)$. 
The last but one is estimated thanks to Lemma \ref{lem:Siu},
$$
-t \Delta_t \log u \leq Bt \, Tr_{\omega_{t+s}}(\omega)+
t \frac{Tr_{\omega}(Ric (\omega_{t+s}))}{Tr_{\omega}(\omega_{t+s})}.
$$
Since
$$
\frac{t}{u} \frac{\partial u}{\partial t}=\frac{t}{u} \Delta_t (\log \omega_{t+s}^n/\omega^n)
=\frac{t}{u} \left\{ -Tr_{\omega}(Ric \, \omega_{t+s}) +Tr_{\omega}(\chi+Ric \, \omega-dd^c h) \right\},
$$
we infer 
$$
-t \Delta_t \log u+\frac{t}{u} \frac{\partial u}{\partial t} \leq 
 (B+C_1) t \, Tr_{\omega_{t+s}}(\omega)+C_2,
$$
using that $Tr_{\omega}(\chi+Ric \, \omega-dd^c h)$ is uniformly bounded below
and the elementary inequality $n \leq  Tr_{\omega_{t+s}}(\omega)  Tr_{\omega}(\omega_{t+s})$.

It follows now from Lemma \ref{lem:comparaisontraces} and the inequality 
$(n-1)\log x <x+C_n$ that
$$
\log u \leq \dot{\f}_{t+s} +C_3+ Tr_{\omega_{t+s}}(\omega)
$$
Altogether this yields
$$
\left(\frac{\partial}{\partial t}-\Delta_t\right)(\a)
\leq C_4- (A-1) \dot{\f}_{t+s} +\left[ (B+C_1) t +1-A/2 \right] Tr_{\omega_{t+s}}(\omega).
$$

We choose $A>0$ so large that $(B+C_1)t +1-A/2 <0$. The desired inequality now follows from the maximum principle and Lemma \ref{lem:majosup}, together with Proposition \ref{pro:STbelow}.
\end{proof}

\subsection{Evans-Krylov and Schauder}

Using the complex parabolic Evans-Krylov theory together with Schauder's estimates
(see \cite[Theorem 3.1.4]{BG13}), 
it follows from  previous results that the following higher order a priori estimates hold:

\begin{thm}  \label{thm:higherorder}
Fix $0<T<T_{max}$.
If $\f_0$ has zero Lelong number at all points $x \in X$ then for each $\e>0$ and $k \in \N$, there exists $C_k(\e) >0$ such that
$$
\| \f_t\|_{{\mathcal C}^k(X \times [\e,T])} \leq C_k(\e).
$$

If $\f_0$ has positive Lelong number at some points, the same results hold but only on some time interval
$[t_\e, T]$.
\end{thm}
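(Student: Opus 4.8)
The plan is to combine the a priori estimates established in the previous sections with the standard regularity theory for nonlinear parabolic equations of complex Monge-Amp\`ere type. The point of departure is that, by Theorem \ref{thm:oscillation} (hence Lemma \ref{lem:majosup}) together with Lemma \ref{lem:minoinf} or Corollary \ref{cor:minodot}, the quantity $\dot{\f_t}=\log f_t$ is bounded above and below on every strip $[\e,T]\times X$ once $\f_0$ has zero Lelong numbers at all points; consequently the oscillation of $\f_t$ is controlled and, by the normalization of the mean value (the Remark after Lemma \ref{lem:convex}), $\f_t$ itself is uniformly bounded on $[\e,T]\times X$. This gives a uniform $C^0$-bound and a uniform bound on $\dot{\f_t}$, i.e. a uniform two-sided bound on $\om_t^n/\om^n$.

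Next I would feed these into the second-order estimate: Corollary after Lemma \ref{pro:c2baby} gives $t\log\mathrm{Tr}_\om(\om_{t})\le 2A\,\mathrm{Osc}_X(\f_{t/2})+C'$, and since $\mathrm{Osc}_X(\f_{t/2})$ is uniformly bounded for $t\ge\e$ (again by Theorem \ref{thm:oscillation}), we obtain a uniform upper bound on $\mathrm{Tr}_\om(\om_t)$, hence $C \om \ge \om_t \ge C^{-1}\om$ on $[\e,T]\times X$ using Corollary \ref{cor:comparaisontraces} and the two-sided determinant bound. Thus the flow is uniformly parabolic with bounded coefficients on the strip. At this stage the equation $\dot\f_t=\log\det(\theta_t+dd^c\f_t)-\log\det\om-h$ is a uniformly elliptic, concave (in the Hessian) fully nonlinear parabolic equation with $C^0$-, $C^1_t$- and $C^2_x$-bounds already in hand; invoking the complex parabolic Evans-Krylov theorem (as in \cite[Theorem 3.1.4]{BG13}, see also \cite{ShW11}) upgrades this to an interior $C^{2+\alpha,1+\alpha/2}$-bound on a slightly smaller strip, say $[2\e,T]$. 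Then Schauder's estimates, applied by differentiating the equation successively and bootstrapping, yield $C^k$-bounds for every $k$ on $[3\e,T]\times X$, and renaming $\e$ gives the stated conclusion $\|\f_t\|_{C^k(X\times[\e,T])}\le C_k(\e)$.

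For the case of positive Lelong numbers, the only modification is that the needed estimates — boundedness of $\dot{\f_t}$ via Theorem \ref{thm:oscillation}, two-sided bound on $\dot{\f_t}$ via Corollary \ref{cor:minodot}, and the second-order estimate — all hold only once $t\ge t_\e$, since then the Lelong numbers of $\f_0/t$ have dropped enough for $f_t$ to lie in $L^{1+\e'}$ and Kolodziej's estimate to apply. Running the same Evans-Krylov/Schauder bootstrap on $[t_\e+3\e,T]$ and relabeling gives the estimate on $[t_\e,T]$.

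The only delicate point — and the main obstacle — is verifying that the hypotheses of the complex parabolic Evans-Krylov and Schauder machinery are genuinely met, namely that after the $C^0$, $\dot{\f}$, and $\mathrm{Tr}_\om(\om_t)$ bounds the flow is uniformly parabolic with the requisite structural (concavity) conditions, and that the interior-in-time nature of our estimates matches the interior-in-time conclusion of those theorems. This is by now routine in the K\"ahler-Ricci flow literature, so I would simply cite \cite[Theorem 3.1.4]{BG13} and \cite{ShW11} and carry out the short verification that our bounds are precisely the data those statements require; no genuinely new analytic input is needed beyond the estimates already proven in the preceding three sections.
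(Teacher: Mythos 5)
Your proposal is correct and takes essentially the same route as the paper, which simply assembles the previously established estimates (the $C^0$/oscillation bound, the two-sided control of $\dot{\f}_t$ for $t\ge\e$, and the Laplacian bound of Lemma \ref{pro:c2baby}) to get uniform parabolicity on the strip and then invokes the complex parabolic Evans--Krylov theory and Schauder bootstrapping, citing \cite[Theorem 3.1.4]{BG13} and \cite{ShW11}. The only slip is one of attribution: the uniform upper bound on $\dot{\f}_t$ on $[\e,T]$ comes from Proposition \ref{pro:clef} applied to the flow restarted from the bounded potential $\f_{\e/2}$ (the semigroup trick used after Proposition \ref{pro:STbelow}), not from Theorem \ref{thm:oscillation} itself, but this does not affect the argument.
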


 We will analyze more precisely the value of $t_\e$ in section \ref{sec:concluding}.

\section{Proof of the main theorem}

We are now ready to prove Theorem \ref{thm:main}.

\subsection{Defining the flow}

Let $\f_0$ be an $\omega$-psh function. 
Using \cite{Dem92} (or \cite{BK07}), we approximate $\f_0$ by a decreasing sequence $\f_{0,j}$
of smooth $\omega$-psh functions. We let $\f_{t,j}$ denote the smooth family of 
$\theta_t$-psh functions, solution of the parabolic flow
$$
\hskip-4cm
(CMAF) \hskip3cm \frac{\partial \f_{t,j}}{\partial t}=\log \left[ \frac{(\theta_t+dd^c \f_{t,j})^n}{\omega^n} \right]
$$
with initial value $\f_{0,j}$, which is well defined on $[0,T_{max}[ \times X$.

It follows from Corollary \ref{cor:envelope} that for $(t,x)$ fixed, $j \mapsto \f_{t,j}(x)$ is non-increasing.
We can thus set
$$
\f_t(x):=\lim_{j\rightarrow +\infty}\searrow \f_{t,j}(x).
$$

It follows from Lemma \ref{lem:minorationgenerale} that the functions $t \mapsto \sup_X \f_{t,j}$
are uniformly bounded, thus $\f_t$ is not identically $-\infty$
hence it is a well defined $\theta_t$-psh function.

It moreover follows from Theorem \ref{thm:higherorder}
that for all $t>0$, $\f_t$ is smooth and satisfies 
$$
\frac{\partial \f_t}{\partial t}=\log \left[ \frac{(\theta_t+dd^c \f_t)^n}{\omega^n} \right]
$$
when $\f_0$ has zero Lelong number at all points.

Note that $(\f_t)$  is relatively compact in $L^1$ as $t \rightarrow 0^+$.
We show in the next section that $\f_t \longrightarrow \f_0$ as $t \searrow  0^+$, where the stronger the regularity assumption on $\f_0$, the stronger the convergence.

\subsection{Continuity at zero}

Let $\p=\lim \f_{t_k}$ 
be a cluster value of $(\f_t)$, as $t \rightarrow 0^+$.
It is a standard property of quasi-plurisubharmonic functions that for all $x \in X$,
$$
\p(x) \leq \limsup \f_{t_k}(x),
$$
with equality almost everywhere.
We claim that 
$$
\p \leq \f_0.
$$
   It follows indeed from Corollary \ref{cor:envelope} that 
$\f_{t,j}$ decreases to $\f_t$, hence for all $j \in \N$,
$$
\p(x) \leq \limsup \f_{t_k}(x) \leq  \limsup \f_{t_k,j}(x)=\f_{0,j}(x)
$$
since $ \f_{t,j}$ is continuous at $t=0$.
The conclusion follows by letting $j \rightarrow +\infty$.

We now analyze various settings in which we establish the reverse inequality.

\subsubsection{Convergence in capacity} 

When the initial data $\f_0$ is continuous, it follows from Corollary \ref{cor:envelope}
that $\f_t \in {\mathcal C}^0([0,T_{max}[ \times X)$, hence $\f_t$ uniformly converges towards
$\f_0$ as $t \rightarrow 0$.  

We can not expect uniform convergence when $\f_0$ is merely bounded; it follows however from 
Lemma \ref{lem:minoinf} that 
$\p \geq \f_0$.  Thus 
$$
\f_t \rightarrow \f_0
$$
in this case and the convergence moreover holds {in capacity: this roughly says that the convergence is uniform outside sets of arbitrarily small capacity. It is the strongest convergence one can expect in this bounded context
(see \cite{GZ05}).

\begin{rem}
Observe that for all $x \in X$,
$$
(\f_t-\f_0)(x)=\int_0^t \dot{\f_s}(x) ds=\int_0^t \log f_s(x)ds,
$$
where $f_s=(\theta_s+dd^c \f_s)^n / \mu$.
We have shown  earlier that, when $\f_0$ is bounded,  
$$
f_s \geq s^n/C  \; \; \; \text{ while }  \; \; \;  \log f_s \leq C/s, 
$$
for some appropriate constant $C>0$. Although the upper bound is much weaker, it is not far from being optimal if the initial data is bounded but not continuous: indeed if we could uniformly bound from above 
$\log f_s(x)$ by $h(s)$ with $h$ integrable at zero,
it would follow that $\f_t$ uniformly converges towards $\f_0$ as $t \rightarrow 0$, hence $\f_0$ would  
have to be continuous.
\end{rem}

\subsubsection{Convergence in energy} 

We consider here the case when  $\f_0$ has finite energy,
$$
E(\f_0):=\frac{1}{(n+1)V} \sum_{j=0}^n \int_X \f_0 (\omega+dd^c \f_0)^j \wedge \omega^{n-j} >-\infty.
$$
We refer the interested reader to \cite{GZ07,BEGZ} for various information on the 
finite energy class ${\mathcal E}^1(X,\omega)$ of those $\omega$-psh functions
with finite energy.

\begin{prop}
Assume  $\f_0 \in {\mathcal E}^1(X,\omega)$.
As $t$ decreases to zero, the functions $\f_t$ converge to $\f_0$ in energy
(hence in particular in $L^1(X)$).
\end{prop}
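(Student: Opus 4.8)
The plan is to show that the cluster value $\p = \lim \f_{t_k}$, which we already know satisfies $\p \leq \f_0$, actually converges to $\f_0$ in the strong energy sense, by controlling the energies $E(\f_{t,j})$ of the approximants from below and invoking compactness of sublevel sets of the energy functional. First I would recall that by Lemma \ref{lem:energymonotone}, applied to each approximant $\f_{0,j}$, the function $t \mapsto E(\f_{t,j}) + Ct$ is increasing on $[0,T]$ with a constant $C$ uniform in $j$ (indeed $C = C_T$ depends only on $\chi$ and $T$, not on the initial data). Hence for all $t \in [0,T]$ and all $j$,
$$
E(\f_{t,j}) \geq E(\f_{0,j}) - Ct \geq E(\f_0) - Ct,
$$
where the last inequality uses that $\f_{0,j} \searrow \f_0$ so that $E(\f_{0,j}) \to E(\f_0)$ by monotonicity and continuity of the energy along decreasing sequences (and $E(\f_{0,j})$ is in any case bounded below by $E(\f_0)$ since $\f_{0,j} \geq \f_0$ and $E$ is non-decreasing). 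Combined with the upper bound $\sup_X \f_{t,j} \leq \sup_X \f_0 + C't$ from Lemma \ref{lem:majosup}, we get that the family $\{\f_{t,j} : 0 < t \leq T, j \in \N\}$ has uniformly bounded energy $E(\f_{t,j}) \geq -C''$ and uniformly bounded sup, hence lies in a fixed sublevel set of the energy functional on $\mathcal{E}^1(X, 2\omega)$.

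Next I would use the fundamental compactness property of finite-energy classes (see \cite{GZ07,BEGZ}): a set of $2\omega$-psh functions with uniformly bounded energy and uniformly bounded sup is relatively compact in $\mathcal{E}^1(X,2\omega)$ for the strong (energy) topology, and in particular the energy functional is upper semicontinuous along sequences converging in $L^1$ while remaining in such a set. Since $\f_{t_k} \to \p$ in $L^1$, we obtain
$$
E(\p) \geq \limsup_{k} E(\f_{t_k}) \geq \limsup_k \bigl( E(\f_0) - C t_k \bigr) = E(\f_0).
$$
But we already know $\p \leq \f_0$, and the energy is strictly monotone on $\mathcal{E}^1$: if $\p \leq \f_0$ with $\p \not\equiv \f_0$ then $E(\p) < E(\f_0)$. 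Therefore $\p = \f_0$. This identifies the unique cluster value, so $\f_t \to \f_0$ in $L^1$, and since moreover $E(\f_t) \to E(\f_0)$ (the inequality $E(\f_t) \geq E(\f_0) - Ct \to E(\f_0)$ combined with upper semicontinuity giving the reverse), the convergence is in energy, which is precisely the assertion.

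The main obstacle I anticipate is the careful passage to the limit in $j$ in the energy lower bound: one must justify that $E(\f_{0,j}) \to E(\f_0)$ (this is a standard but non-trivial fact about decreasing sequences in $\mathcal{E}^1$, using that the mixed Monge--Amp\`ere measures converge weakly and that $\f_{0,j}$ are uniformly bounded above) and, more delicately, that the $L^1$-limit $\p$ of $\f_{t_k}$ coincides with the pointwise decreasing limit structure so that the energy upper semicontinuity genuinely applies — here one uses that along a fixed subsequence $t_k$, after possibly a diagonal extraction in $j$, the functions $\f_{t_k}$ are themselves limits of $\f_{t_k,j}$ with uniformly controlled energy, so the relevant compactness is that of the doubly-indexed family. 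A secondary technical point is the strict monotonicity of $E$: one invokes that $E(\p) = E(\f_0)$ with $\p \leq \f_0$ forces $\int_X (\f_0 - \p)\, \MA(\f_0) = 0$ type identities, which by the domination principle in $\mathcal{E}^1$ yields $\p = \f_0$ everywhere. Once $\p = \f_0$ is established for every cluster value, the convergence statement and the energy convergence follow immediately.
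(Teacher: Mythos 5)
Your argument is correct and follows essentially the same route as the paper: the energy lower bound $E(\f_t)\ge E(\f_0)-Ct$ from Lemma \ref{lem:energymonotone}, compactness in ${\mathcal E}^1$, upper semicontinuity of $E$ under $L^1$-convergence, and monotonicity of $E$ combined with the known inequality $\p\le\f_0$ to force $E(\p)=E(\f_0)$ and hence $\p=\f_0$, giving convergence in energy. You merely make explicit two points the paper leaves implicit, namely the uniform-in-$j$ bound on $E(\f_{t,j})$ for the approximants and the strict monotonicity (domination principle) step identifying $\p$ with $\f_0$.
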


\begin{proof}
It follows from Lemma \ref{lem:energymonotone} that the functions $\f_t$ stay in a compact subset of 
${\mathcal E}^1(X,\omega)$. Let $\p \leq \f_0$ be a cluster point as $t \rightarrow 0$.

Recall from \cite{BEGZ} that the energy $E(\cdot)$ is upper semi-continuous (for the weak
$L^1$-topology), it follows therefore from Lemma \ref{lem:energymonotone} that
$$
E(\f_0) \leq \lim_{t_j \rightarrow 0} E(\f_{t_j}) \leq E(\p) \leq E(\f_0),
$$
where the latter inequality follows from the fact that $\f \mapsto E(\f)$ is monotone increasing.
We infer $E(\p)=E(\f_0)$, whence $\f_0=\p$, as desired.

This not only shows that $\f_t \rightarrow \f_0$ in the $L^1$-sense as $t \rightarrow 0$, but also
that it does so in energy, in the sense of \cite{BBGZ,BBEGZ}. 
\end{proof}

Let us stress an important feature of the convergence in energy: it guarantees the continuity of the complex Monge-Amp\`ere measures 
$$
(\theta_t+dd^c \f_t)^n \stackrel{t \rightarrow 0}{\longrightarrow} (\omega+dd^c \f_0)^n,
$$
whereas these  operators are usually discontinuous for the  weaker $L^1$-convergence of potentials,
which we now consider.

\subsubsection{$L^1$-convergence}

We finally treat the general case of an arbitrary initial data $\f_0 \in L^1$.
Recall from  Lemma \ref{lem:minorationgenerale} that 
$$
(1-\b t) \f_0 + \a t u +n (t \log t-t) \leq \f_t
$$
where $u$ is a continuous $\omega$-psh function, thus
$$
\f_0 \leq \liminf_{t \rightarrow 0} \f_t.
$$
Since $\p \leq \f_0$, we infer $\p \equiv \f_0$, as desired.

\subsection{Uniqueness}

Let $(\f_t)_{t>0}$ be the solution to $(CMAF)$ constructed above by approximation, and
assume $(\p_t)_{t>0}$ is another family of smooth $\theta_t$-psh functions
which satisfy, on $]0,T_{max}[ \times X$,
$$
 \frac{\partial \p_t}{\partial t}=\log \left[ \frac{(\theta_t+dd^c \p_t)^n}{\mu} \right],
$$
with $\p_t \rightarrow \f_0$ as $t \searrow 0^+$.

We claim that $\p_t(x) \leq \f_t(x)$ for all $t,x$. Let $\f_{0,j}$
be a family of smooth $\omega$-psh functions decreasing to $\f_0$.
By construction the $\theta_t$-psh functions $\f_{t,j}$ are smooth on $[0,T_{max}[ \times X$
and decrease pointwise to $\f_t$. It thus suffices to show that $\p_t \leq \f_{t,j}$, for all fixed $j \in \N$.

Fix $\e>0$. It follows from the maximum principle that the smooth function $\p_t-\f_{t,j}$ attains 
its maximum on $[\e,T] \times X$ at a point $(\e,x_{\e})$, thus
$$
\p_t(x)-\f_{t,j}(x) \leq \sup_X (\p_{\e}-\f_{\e,j}).
$$
Since $(\e,x) \mapsto \f_{\e,j}(x)$ is continuous, it follows from Hartogs lemma that
$$
\sup_X (\p_{\e}-\f_{\e,j}) \stackrel{\e \rightarrow 0}{\longrightarrow} \sup_X (\f_0-\f_{0,j}) \leq 0.
$$
Thus $\p_t(x) \leq \f_{t,j}(x)$ for all $(t,x,j)$ and the conclusion follows.

\smallskip

The uniqueness property we have just established is called "maximally stretched" by P.Topping in complex dimension $n=1$ (see \cite{Top10, GT11}).

\section{Currents with positive Lelong numbers} \label{sec:concluding}

\subsection{Long time existence}

As we have noted on several occasions, our construction of the (twisted) K\"ahler-Ricci
flow also yields interesting information in case the initial current $T_0$ has positive Lelong numbers.

Recall that the (global) integrability index of $T_0=\omega+dd^c \f_0$ is 
$$
c(T_0):=\sup \{ c >0 \, | \, \int_X e^{-2c \f_0} \omega^n <+\infty \}.
$$
The definition clearly does not depend on the choice of potential.

\smallskip

When $T_0$ has positive Lelong numbers, it follows from Skoda's integrability theorem \cite{Sko} that
$c(T_0)$ is positive and finite. The "openess conjecture" of Demailly-Kollar \cite{DK01}) 
asserts that $e^{-2c(T_0)\f_0} \notin L^1$.
It has been solved very recently by  Berndtsson \cite{Bern13}
(the two dimensional case was previously settled in \cite{FJ05}).
 When the cohomology class of $\omega$ is rational and $T_0=[D]$ is
the current of integration along a 
(rational) effective divisor, the exponent $c(T_0)=lct(D)$ is the log-canonical threshold of 
$D$, an important algebraic invariant.

\medskip

If $T_{max} >1/c(T_0)$ it follows from Proposition \ref{pro:clef} that we have good control on
the time derivative $\dot{\f}_t$ for $1/2c(T_0) < t < T_{max}$, namely
$$
|| e^{\dot{\f_t}}||_{L^{1+\e}} \leq C_\e
$$
for all $\e>0$ and $(1+\e)/2c(T_0) \leq  t \leq T_\e< T_{max}$. The following result
 therefore follows from 
Kolodziej's estimates and our previous analysis:

\begin{thm} \label{thm:regLelong}
Let $T_0 \in \a_0$ be a positive current such that $1/2c(T_0)<T_{max}$. There exists a unique 
maximal family $(\omega_t)_{0<t<T_{max}}$ of positive currents, whose potentials are 
the decreasing limits of parabolic flows approximating the potential of $T_0$;
$\omega_t$ weakly converges towards $T_0$, as $t \searrow 0^+$.

When $t> 1/2c(T_0)$ these are smooth K\"ahler forms such that
$$
\frac{\partial \omega_t}{\partial t}=-Ric(\omega_t)+\eta.
$$
\end{thm}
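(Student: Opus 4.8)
The plan is to re-run the approximation scheme used to prove Theorem \ref{thm:main}, the only new point being that the integrability of the densities now degenerates near $t=0$ in a way controlled by $c(T_0)<+\infty$. Fix a K\"ahler form $\om\in\a_0$ and write $T_0=\om+dd^c\f_0$, so that $c(T_0)=c(\f_0)$ is the uniform integrability index of the previous sections. Using Demailly's theorem \cite{Dem92}, choose smooth strictly $\om$-psh functions $\f_{0,j}\searrow\f_0$, let $\f_{t,j}$ denote the solution of $(CMAF)$ with initial data $\f_{0,j}$ (well defined on $[0,T_{max}[\times X$), and set $\f_t:=\lim_j\searrow\f_{t,j}$, which is legitimate because $j\mapsto\f_{t,j}(x)$ is non-increasing by Corollary \ref{cor:envelope}. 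The candidate family is $\om_t:=\theta_t+dd^c\f_t$.

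First I would check that $\f_t\not\equiv-\infty$, so that $\f_t\in PSH(X,\theta_t)$, for every $0<t<T_{max}$. Fix $\b\in(0,c(T_0))$ with $t<1/2\b$; since $e^{-2\b\f_{0,j}}\le e^{-2\b\f_0}$ is bounded in some $L^p$ with $p>1$, the auxiliary potentials $u_j$ solving $\a^n(\om+dd^c u_j)^n=e^{\a u_j-2\b\f_{0,j}}\mu$ are uniformly bounded by Kolodziej's estimate \cite{Kol98}, and Lemma \ref{lem:minorationgenerale} applied with initial data $\f_{0,j}$ gives
$$
\f_{t,j}\ge(1-2\b t)\f_{0,j}+\a t\,u_j+n(t\log t-t).
$$
Taking suprema and using that $\sup_X\f_{0,j}$ stays bounded, we obtain a uniform lower bound on $\sup_X\f_{t,j}$; together with the inequality $\sup_X\f-C_\mu\le\frac1V\int_X\f\,d\mu$ of \cite{GZ05} and monotone convergence this forces $\f_t\in L^1$, exactly as in the proof of Theorem \ref{thm:main}. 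Since $c(T_0)>0$ one may take $\b$ as small as needed, so this argument covers all $0<t<T_{max}$. Passing to the limit in the displayed inequality (with $u=\lim_j u_j$) yields $\liminf_{t\to0}\f_t\ge\f_0$, while any cluster value $\p=\lim\f_{t_k}$ satisfies $\p(x)\le\limsup_k\f_{t_k,j}(x)=\f_{0,j}(x)$ by continuity of $\f_{\cdot,j}$ at $t=0$, hence $\p\le\f_0$ after $j\to\infty$; thus $\f_t\to\f_0$ in $L^1$ and $\om_t\to T_0$ weakly as $t\searrow0^+$.

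Next I would upgrade regularity for $t>1/2c(T_0)$. Proposition \ref{pro:clef} gives, uniformly in $j$ and using $\f_{0,j}\ge\f_0$,
$$
e^{\dot\f_{t,j}(x)}\le e^{C}\exp\Big(\frac{-\f_0(x)+C}{t}\Big),
$$
so by Skoda's integrability theorem \cite{Sko} the densities $e^{\dot\f_{t,j}}$ are bounded in $L^{1+\e}(\om^n)$ on any time window $[t_\e,T_\e]$ with $t_\e>1/2c(T_0)$, provided $\e>0$ is chosen small enough. Kolodziej's uniform estimate \cite{Kol98,EGZ08} then bounds $\mathrm{Osc}_X(\f_{t,j})$, and with Lemma \ref{lem:majosup} it follows that $\f_{t,j}$ is uniformly bounded on $X\times[t_\e,T_\e]$. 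Re-running, from a bounded time slice, the lower bound on $\dot\f_{t,j}$ of Proposition \ref{pro:STbelow} (as in Corollary \ref{cor:minodot}) and then the parabolic Evans--Krylov and Schauder estimates of Theorem \ref{thm:higherorder}, one gets $\f_{t,j}\to\f_t$ in $C^\infty(X\times[t_\e,T_\e])$; letting $\e\to0$ shows $\f_t$ is smooth and solves $(CMAF)$ for every $t>1/2c(T_0)$, so $\om_t=\theta_t+dd^c\f_t$ is then a smooth K\"ahler form with $\frac{\partial\om_t}{\partial t}=-\Ric(\om_t)+\eta$.

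Finally, uniqueness in the maximally stretched sense is verbatim the argument given in the proof of Theorem \ref{thm:main}: if $\p_t$ is another solution with $\p_t\to\f_0$ in $L^1$, the maximum principle applied to $\p_t-\f_{t,j}$ on $[\e,T]\times X$ together with Hartogs' lemma as $\e\to0$ yields $\p_t\le\f_{t,j}$ for all $j$, hence $\p_t\le\f_t$; maximality of the interval $]0,T_{max}[$ is the usual cohomological obstruction to running the flow. The main obstacle is the third step: turning the pointwise bound of Proposition \ref{pro:clef} into a \emph{uniform} $L^{1+\e}$ control of the densities --- this is exactly where the hypothesis $c(T_0)<+\infty$ produces the time threshold $t>1/2c(T_0)$ via Skoda's theorem --- and then re-feeding it into Kolodziej's estimate and the higher order machinery on the shifting window $[t_\e,T_\e]$ in place of $[\e,T]$. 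Everything else is a transcription of the bounded case treated in Theorems \ref{thm:main} and \ref{thm:higherorder}.
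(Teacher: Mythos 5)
Your overall route is the same as the paper's (which only sketches this theorem): Proposition \ref{pro:clef} plus the definition of $c(T_0)$ gives uniform $L^{1+\e}$ control of the densities for $t\ge t_\e>1/2c(T_0)$, Kolodziej's estimate then bounds the oscillation, and one re-feeds this into Proposition \ref{pro:STbelow}, the Laplacian estimate and Evans--Krylov/Schauder on the shifted window, with uniqueness and weak convergence at $t=0$ exactly as in Theorem \ref{thm:main}. That part of your write-up is faithful to the intended argument (one cosmetic point: the $L^{1+\e}$ bound comes directly from the definition of the integrability index $c(T_0)$, not from Skoda's theorem, which only relates $c$ to Lelong numbers).

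There is, however, one concrete misstep: to show $\f_t\not\equiv-\infty$ for \emph{all} $0<t<T_{max}$ you claim that ``one may take $\b$ as small as needed'' in Lemma \ref{lem:minorationgenerale}. That lemma requires $\a>0$ with $\chi+(2\b-\a)\om\ge 0$, i.e.\ $\chi+2\b\om>0$; when $\chi$ has a negative part this forces $\b$ to be bounded \emph{below}, while $\b<c(T_0)$ bounds it above, so small $\b$ is in general not admissible and the subsolution only controls times $t<1/2\b$, i.e.\ only slightly beyond $1/2c(T_0)$ (that an admissible $\b$ close to $c(T_0)$ exists at all uses the hypothesis $1/2c(T_0)<T_{max}$ together with the normalization $\theta_t\ge\om/2$). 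To cover the remaining times you should propagate the estimates forward: pick $t_1$ with $(1+\e)/2c(T_0)<t_1<1/2\b$, where the $\f_{t_1,j}$ are uniformly bounded (oscillation bound from Kolodziej plus the lower bound on $\sup_X\f_{t_1,j}$ coming from the subsolution), and then apply Lemma \ref{lem:minoinf} and Corollary \ref{cor:envelope} with initial time $t_1$ (semigroup property) to get uniform two-sided bounds on $[t_1,T]$ for every $T<T_{max}$; this yields $\f_t\in PSH(X,\theta_t)$, and then smoothness for all $t>1/2c(T_0)$, as required. With this correction your proof is complete and coincides with the paper's.
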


This basically means that the singularities of $T_0$ are slowly attenuated 
(by Lemma \ref{lem:minorationgenerale} the Lelong numbers of $\f_t$ are linearly decreasing)
so that $\omega_t$ eventually becomes a smooth K\"ahler form.
This should be compared with Demailly's use of Kiselman's technique of attenuation of singularities:
when regularizing a positive current in \cite{Dem92}, the presence of positive Lelong numbers is an obstruction to the approximation with small loss of positivity.

\smallskip

When $c_1(X)-\{ \eta\} \leq 0$ so that $T_{max}=+\infty$, we therefore obtain the following generalization of Cao's celebrated result:

\begin{thm} \label{thm:cao} \cite{Cao85,ST12}
If $c_1(X)-\{ \eta\} \leq 0$, the twisted K\"ahler-Ricci flow continuously deforms {\it any} positive closed $(1,1)$-current to a canonical metric.
\end{thm}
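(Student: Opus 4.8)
The plan is to combine the instantaneous smoothing already established in Theorem~\ref{thm:regLelong} with the classical long-time convergence of the \emph{smooth} twisted K\"ahler-Ricci flow; no new a priori estimate is needed. The hypothesis $c_1(X)-\{\eta\}\le 0$ means precisely that $\{\chi\}=\{\eta\}-c_1(X)$ is nef. Since $\a_0$ is K\"ahler, the class $\a_0+t\{\chi\}=tK_X+t\{\eta\}+\a_0$ is then K\"ahler for every $t\ge 0$, so $T_{max}=+\infty$. Let now $T_0\in\a_0$ be an arbitrary positive closed $(1,1)$-current; by Skoda's integrability theorem \cite{Sko} one has $c(T_0)>0$, hence $1/2c(T_0)<+\infty=T_{max}$.

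We may thus apply Theorem~\ref{thm:regLelong}: there is a unique maximal family $(\om_t)_{0<t<+\infty}$ of positive $(1,1)$-currents, with $[\om_t]=\a_0+t\{\chi\}$, weakly converging to $T_0$ as $t\searrow 0^+$, and which for $t>1/2c(T_0)$ is a smooth family of K\"ahler forms solving $\frac{\partial\om_t}{\partial t}=-\Ric(\om_t)+\eta$. By Lemma~\ref{lem:minorationgenerale} the Lelong numbers of a potential of $\om_t$ decrease linearly in $t$ and vanish once $t>1/2c(T_0)$, which is precisely why the current becomes smooth after a finite time. The weak continuity of $t\mapsto\om_t$ on all of $[0,+\infty)$ — including the interval $(0,1/2c(T_0)]$ on which $\om_t$ may still carry positive Lelong numbers — follows from the construction of $\om_t$ as a decreasing limit of the smooth parabolic flows approximating $T_0$, together with the uniform bounds of the previous sections: one restarts the flow from any time $s>0$ and invokes the weak continuity at the origin exactly as in the proof of Theorem~\ref{thm:main}.

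It remains to identify the limit as $t\to+\infty$. Fix any $t_0>1/2c(T_0)$; then $(\om_t)_{t\ge t_0}$ is the smooth twisted K\"ahler-Ricci flow issued from the K\"ahler form $\om_{t_0}$, globally defined since $T_{max}=+\infty$. By the classical convergence theory — Cao's theorem \cite{Cao85} when $\eta=0$, and its twisted version \cite{ST12} in general — this flow converges, after the usual normalization when $c_1(X)\neq\{\eta\}$, to a canonical metric $\om_\infty$: if $c_1(X)=\{\eta\}$ then $[\om_t]\equiv\a_0$ and $\om_t\to\om_\infty$ smoothly with $\Ric(\om_\infty)=\eta$, the unique such form in $\a_0$ given by Yau's theorem; if $c_1(X)-\{\eta\}<0$, the normalized flow converges smoothly to the unique twisted K\"ahler-Einstein metric $\Ric(\om_\infty)=\eta-\om_\infty$; and in the remaining nef-but-not-K\"ahler case one gets convergence to a twisted canonical metric on the canonical model in the sense of \cite{ST12}. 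Concatenating the two stages, $t\mapsto\om_t$ is a continuous path of positive closed $(1,1)$-currents issued (weakly) from $T_0$ at $t=0$, instantly smooth for $t>1/2c(T_0)$, and converging to $\om_\infty$ as $t\to+\infty$, which proves the theorem. The point that requires care — and which I expect to be the main obstacle — is the weak continuity of the flow on the ``bad'' interval $(0,1/2c(T_0)]$, before the singularities of $T_0$ have been fully attenuated; this is handled by the monotone-limit construction together with the bounds of Lemmata~\ref{lem:majosup} and~\ref{lem:minorationgenerale} already used in the proof of Theorem~\ref{thm:main}. Everything else is a direct combination of Theorem~\ref{thm:regLelong} with known long-time convergence results.
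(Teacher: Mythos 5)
Your proposal is correct and follows essentially the same route as the paper's (largely implicit) proof: the hypothesis $c_1(X)-\{\eta\}\le 0$ forces $T_{max}=+\infty$, so by Skoda's theorem $1/2c(T_0)<T_{max}$ and Theorem \ref{thm:regLelong} applies verbatim, after which the long-time convergence to a canonical metric is delegated to the classical results \cite{Cao85,ST12}, exactly as the citations in the statement indicate. No further comment is needed.
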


On the other hand there might be no smoothing at all when $T_{max}<1/2c(T_0)$. 
We analyze more precisely this problem in the next section.

\subsection{The normalized K\"ahler-Ricci flow on Fano manifolds}

In this section we have a closer look at the smoothing properties of the
normalized K\"ahler-Ricci flow on a Fano manifold, i.e. a manifold with positive first Chern class
$c_1(X)$. 

We assume here $\a_0=c_1(X)$ and the Ricci-flow is normalized so as to have constant volume,
$$
\frac{\partial \omega_t}{\partial t}=-\Ric(\omega_t)+\omega_t.
$$
One classically passes from the (unnormalized) K\"ahler-Ricci flow to the normalized one by
rescaling in space and time: if $\Omega_s$ solves the K\"ahler-Ricci flow, then
$$
\omega_t=e^t \, \Omega_{1-e^{-t}}
$$
solves the normalized K\"ahler-Ricci flow. At the level of potentials we obtain
$$
\hskip-3.5cm(NCMAF) \hskip2cm  \frac{\partial \f_t}{\partial t}=\log \left[ \frac{(\omega+dd^c \f_t)^n}{e^h \, \omega^n} \right]+\f_t,
$$
where $\omega$ is a fixed K\"ahler form in $c_1(X)=\a_0$, $\omega_t=\omega+dd^c \f_t$ and
$$
\Ric(\omega)=\omega+dd^c h.
$$

Our analysis thus applies here again, showing that we can run this flow from an arbitray 
positive current $T_0=\omega+dd^c \f_0$ with zero Lelong numbers. The flow exists here for all times
$t>0$, i.e. the "normalized" $T_{max}$ is $+\infty$.

We  can adapt the proof of Proposition \ref{pro:clef} and obtain
here:

\begin{prop} \label{pro:majoNKRF}
For all $x \in X$ and $t>0$,
$$
\dot{\f_t}(x) \leq \frac{-\f_0(x)+e^t \sup_X \f_0+nt}{1-e^{-t}}
$$
\end{prop}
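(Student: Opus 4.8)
The proof runs parallel to that of Proposition \ref{pro:clef}, the only new ingredient being the zeroth–order term $\f_t$ in $(NCMAF)$, which couples $\dot{\f_t}$ to $\f_t$.

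First I would record a few identities. Differentiating $(NCMAF)$ in time and using that the reference form $\omega$ is fixed (so $\partial_t\omega_t=dd^c\dot{\f_t}$), one gets
$$
\ddot{\f_t}=\Delta_{\omega_t}\dot{\f_t}+\dot{\f_t},\qquad \Delta_{\omega_t}\f_t=n-\tr_{\omega_t}(\omega),\qquad \Delta_{\omega_t}\f_0=\tr_{\omega_t}(T_0)-\tr_{\omega_t}(\omega),
$$
where $T_0=\omega+dd^c\f_0$ is regarded as a fixed positive current, and the last two identities come from $dd^c\f_t=\omega_t-\omega$ and $dd^c\f_0=T_0-\omega$. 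The point of the proof is to guess the correct time–weight: in analogy with Proposition \ref{pro:clef}, but with $t$ replaced by $a(t):=1-e^{-t}$, set
$$
H(t,x):=(1-e^{-t})\,\dot{\f_t}(x)-\f_t(x)+\f_0(x)-nt .
$$
Computing $\partial_t H$ and $\Delta_{\omega_t}H$ from the identities above and subtracting, the two occurrences of $\tr_{\omega_t}(\omega)$ cancel, the constant terms cancel, and the coefficient of $\dot{\f_t}$ is $a'(t)+a(t)-1=0$, so one is left with
$$
\left(\pddt-\Delta_{\omega_t}\right)H=-\tr_{\omega_t}(T_0)\le 0 ,
$$
since $T_0\ge 0$. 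By the maximum principle (Proposition \ref{prop:max}), $H$ attains its supremum at $t=0$; as $H(0,\cdot)\equiv 0$ we obtain $H\le 0$, that is
$$
(1-e^{-t})\,\dot{\f_t}(x)\le \f_t(x)-\f_0(x)+nt\qquad\text{for all }t>0,\ x\in X .
$$

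It then remains to bound $\f_t$ from above, which is the analogue of Lemma \ref{lem:majosup}: comparing $\f_t$ with the spatially constant super-solution $\p_t:=e^t\sup_X\f_0$ of $(NCMAF)$ — with the Ricci potential $h$ normalized accordingly; otherwise, exactly as the $-\inf_X h$ term in Lemma \ref{lem:majosup}, an extra additive contribution proportional to $e^t-1$ appears — the comparison principle gives $\f_t\le e^t\sup_X\f_0$. Inserting this into the inequality above and dividing by $1-e^{-t}>0$ yields the claim. The one place where care is needed is precisely the coupling created by the $\f_t$–term: it forces the weight $1-e^{-t}$ (the solution of $a'+a=1$, $a(0)=0$) in place of the linear weight of Proposition \ref{pro:clef}, and it is what makes the right–hand side blow up like $(\sup_X\f_0-\f_0(x))/t$ as $t\searrow 0^+$, in accordance with the bounded–data case.
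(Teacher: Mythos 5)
Your proof is correct and follows essentially the same route as the paper: the same auxiliary function $H(t,x)=(1-e^{-t})\dot{\f_t}-(\f_t-\f_0)-nt$, the same computation $\left(\pddt-\Delta_{\omega_t}\right)H=-\tr_{\omega_t}(T_0)\le 0$, and the maximum principle. The only cosmetic difference is how the upper bound on $\f_t$ is obtained — the paper applies the maximum principle to $e^{-t}\f_t$, while you compare with the spatially constant supersolution $e^{t}\sup_X\f_0$ — and these are equivalent (including the caveat you rightly note about the normalization of $h$).
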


\begin{proof}
Consider $H(t,x)=(1-e^{-t}) \dot{\f_t}-(\f_t-\f_0)-nt$ and show that
$$
\left( \frac{\partial}{\partial t}-\Delta_t \right)H=-\tr_{\omega_t}(T_0) \leq 0.
$$

The upper bound on $\f_t$ can be obtained by applying the maximum principle to 
the function $e^{-t} \f_t(x)$.
\end{proof}

Although the flow exists for all times $t>0$, the attenuation of singularities is limited,
as $1-e^{-t} \nearrow 1$ as $t \nearrow +\infty$ (in contrast to $t \nearrow +\infty$ in
Proposition \ref{pro:clef}).
One can indeed not always expect smoothing properties in this case as the following simple example shows:

\begin{exa} \label{exa:pn}
Let $X=\C\PP^n$ equipped with a K\"ahler-Einstein Fubini-Study metric 
$\omega=(n+1) \omega_{FS} \in c_1(\C\PP^n)$, 
whose potential is written in some homogeneous coordinates as
$$
\rho=\frac{(n+1)}{2}\log\sum_{i=0}^n |z_i|^2.
$$
We can rescale these coordinates and still obtain a K\"ahler-Einstein metric (image of the initial one by a holomorphic automorphism), for example
$$
\f^j=\frac{(n+1)}{2}\log \left[|z_0|^2/j+\sum_{i=1}^n |z_i|^2 \right]-(n+1)\log ||z||
$$
is such that $\omega^j:=\omega+dd^c \f^j$ is K\"ahler-Einstein. Observe that
$$
\f_0=(n+1) \log||(z_1,\ldots,z_n)||-(n+1) \log ||z||=\lim_{j \rightarrow +\infty} \searrow\f^j
$$
is such that $c(\f_0)=n/(n+1)$.

Since the $\omega^j$ are fixed points of the normalized K\"ahler-Ricci flow, our approximation process
yields in this case a stationary family $\f_t \equiv \f_0$, hence there is no regularization at all here !
A similar construction can be made on any K\"ahler-Einstein Fano manifold 
admitting non trivial holomorphic vector fields.
\end{exa}

On the other hand we have  the following positive result, which allows to extend Perelman's celebrated convergence result:

\begin{thm} \label{thm:fano}
Let $X$ be a Fano manifold.
Let $T_0 \in c_1(X)$ be a positive current with integrability index $c(T_0)>1$. 
Then the normalized
K\"ahler-Ricci flow can be run from $T_0$ and yields a family of positive currents 
$\omega_t$ which are smooth K\"ahler forms for $t \geq 1$.

If $D_0$ is a smooth anticanonical divisor, then one can also run the normalized
K\"ahler-Ricci flow with initial data $T_0=[D_0]$.

In particular if $X$ admits a K\"ahler-Einstein, then the normalized
K\"ahler-Ricci flow continously deforms any such $T_0$ to a K\"ahler-Einstein metric.
\end{thm}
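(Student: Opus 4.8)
The strategy is to mimic the approximation scheme of Section \ref{sec:concluding}/Section 5 for the normalized flow $(NCMAF)$, whose maximal existence time is $T_{max}=+\infty$. Pick a decreasing sequence $\f_{0,j}$ of smooth strictly $\omega$-psh functions converging to a potential $\f_0$ of $T_0$ (\cite{Dem92}), run $(NCMAF)$ from each to obtain smooth solutions $\f_{t,j}$ on $]0,+\infty[\times X$, writing $\omega_{t,j}:=\omega+dd^c\f_{t,j}$; note as in Corollary \ref{cor:envelope} that $j\mapsto\f_{t,j}(x)$ is non-increasing, and set $\f_t:=\lim_j\f_{t,j}$. The $(NCMAF)$-analogue of Lemma \ref{lem:minorationgenerale}, obtained by the same maximum-principle argument (using $0<\beta<c(T_0)$), shows on the one hand that $\sup_X\f_{t,j}$ stays uniformly bounded --- so $\f_t\not\equiv-\infty$ --- and on the other that $\f_0\le\liminf_{t\to0}\f_t$; since any cluster value $\p$ of $(\f_t)$ satisfies $\p\le\f_0$, this forces $\f_t\to\f_0$ in $L^1$, hence $\omega_t\to T_0$ weakly as $t\searrow0^+$.

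The key point is a uniform bound on the densities $F_{t,j}:=\omega_{t,j}^n/\omega^n=e^{\dot{\f}_{t,j}-\f_{t,j}+h}$. From the function $H=(1-e^{-t})\dot{\f}_{t,j}-(\f_{t,j}-\f_{0,j})-nt$ of Proposition \ref{pro:majoNKRF}, which satisfies $(\partial_t-\Delta_{\omega_{t,j}})H=-\tr_{\omega_{t,j}}(\omega+dd^c\f_{0,j})\le0$ and hence $H\le0$, one reads off $(1-e^{-t})\dot{\f}_{t,j}\le\f_{t,j}-\f_{0,j}+nt\le\f_{t,j}-\f_0+nt$ (using $\f_{0,j}\ge\f_0$), and therefore
$$(1-e^{-t})\big(\dot{\f}_{t,j}-\f_{t,j}\big)\ \le\ e^{-t}\f_{t,j}-\f_0+nt .$$
Applying the maximum principle to $e^{-t}\f_{t,j}-Ct$ with $C:=\max(0,-\inf_X h)$ gives $e^{-t}\f_{t,j}\le\sup_X\f_{0,j}+Ct\le\sup_X\f_{0,1}+Ct$, whence
$$\dot{\f}_{t,j}-\f_{t,j}\ \le\ \frac{-\f_0+C_t}{1-e^{-t}}, \qquad F_{t,j}\ \le\ C'_t\,\exp\left(\frac{-\f_0}{1-e^{-t}}\right),$$
with $C_t,C'_t$ independent of $j$ and locally bounded on $]0,+\infty[$. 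Consequently $\|F_{t,j}\|_{L^{1+\e}(\omega^n)}$ is bounded, uniformly in $j$ and locally uniformly in $t$, as soon as $(1+\e)/(1-e^{-t})<2\,c(T_0)$, i.e. $1-e^{-t}>(1+\e)/\big(2\,c(T_0)\big)$.

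If $c(T_0)>1$ then $1/\big(2c(T_0)\big)<\tfrac12<1-e^{-1}$, so for $\e>0$ small this holds for all $t\ge t_1$ with some $t_1<1$; if $T_0=[D_0]$ with $D_0$ a smooth anticanonical divisor, then $[D_0]=\a_0=c_1(X)$ and $c([D_0])$ equals the complex singularity exponent of the smooth hypersurface $D_0$, namely $1$, so the same holds for all $t\ge t_1$ with some $t_1\in(\log 2,1)$, again for $\e$ small. In both cases the uniform $L^{1+\e}$-bound feeds into Kolodziej's estimate \cite{Kol98} to give a uniform bound on $\mathrm{Osc}_X\f_{t,j}$ for $t\ge t_1$ (the analogue of Theorem \ref{thm:oscillation}); combined with the $(NCMAF)$-analogues of Proposition \ref{pro:STbelow}/Corollary \ref{cor:minodot} (a lower bound on $\dot{\f}_{t,j}$ for $t\ge t_1$, available at all later times since $T_{max}=+\infty$, obtained by restarting the flow at $t_1$ where $\f_{t_1,j}$ is uniformly bounded) and of Lemma \ref{pro:c2baby} (a bound on $\Delta\f_{t,j}$), the complex parabolic Evans--Krylov theory and Schauder estimates yield uniform $\mathcal C^k$-bounds on $[1,T]\times X$ for every $k$ and every $T>1$. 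Letting $j\to+\infty$, the limit $\f_t$ is smooth and solves $(NCMAF)$ on $[1,+\infty[\times X$, so $\omega_t$ is there a smooth K\"ahler form evolving by the normalized K\"ahler--Ricci flow; as in Theorem \ref{thm:regLelong}, $(\omega_t)$ is the maximal such family. I expect the main obstacle to be the sharpness of the density estimate: the borderline case $T_0=[D_0]$ goes through only because $1-e^{-1}$ is (just) larger than $1/2=1/\big(2\,c([D_0])\big)$, which is exactly what pins the regularization threshold at ``$t\ge1$''.

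Finally, if $X$ carries a K\"ahler--Einstein metric, restart the flow at $t=1$ from the \emph{smooth} K\"ahler form $\omega_1\in c_1(X)$: by Perelman's convergence theorem for the normalized K\"ahler--Ricci flow on Fano manifolds (established by Tian--Zhu), the flow issued from any smooth metric in $c_1(X)$ converges in $\mathcal C^\infty$ to a K\"ahler--Einstein metric. Hence $\omega_t$ converges to a K\"ahler--Einstein metric as $t\to+\infty$, which exhibits the asserted continuous deformation of $T_0$ to a K\"ahler--Einstein metric.
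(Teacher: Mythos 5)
Your overall scheme (Demailly approximation, monotonicity in $j$, a uniform $L^{1+\e}$ bound on the densities feeding Kolodziej's estimate, then the Song--Tian lower bound on $\dot\f$, the Laplacian estimate, Evans--Krylov and Schauder) is the same as the paper's, and your density estimate is in fact a sharpening of it: by keeping $(1-e^{-t})(\dot\f_{t,j}-\f_{t,j})\le e^{-t}\f_{t,j}-\f_0+nt$ together and using only the \emph{upper} bound on $e^{-t}\f_{t,j}$, you get the exponent $1/(1-e^{-t})$ on $-\f_0$. The paper instead combines Proposition \ref{pro:majoNKRF} with a \emph{lower} bound $\f_t\ge e^{-t}\f_0+(1-e^{-t})u+O(t\log t)$ coming from a subsolution, and ends up with the worse exponent $e^{-t}+1/(1-e^{-t})$, whence its threshold $t>-\ln\bigl(\frac{3-\sqrt{5}}{2}\bigr)\approx 0.96$ (the reason the statement says $t\ge 1$). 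Your computation of that step is correct and would even give smoothing slightly before $t=1$.

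The genuine gap is at time zero. You dispose of the well-definedness of $\f_t$ and of $\f_0\le\liminf_{t\to 0}\f_t$ by invoking an ``$(NCMAF)$-analogue of Lemma \ref{lem:minorationgenerale} obtained by the same maximum-principle argument''. That transcription fails: for the normalized equation a subsolution must satisfy $(\om+dd^c\p_t)^n\ge e^{\dot\p_t-\p_t+h}\om^n$, and with the candidate $\p_t=(1-2\b t)\f_0+\a t u+n(t\log t-t)-Ct$, where $\a^n(\om+dd^c u)^n=e^{\a u-2\b\f_0+h}\om^n$, the terms $n\log t+\a u-2\b\f_0$ cancel on both sides and the requirement reduces to $\p_t\ge -C$ pointwise --- impossible precisely because $\f_0$ is unbounded below, which is the whole point here ($c(T_0)<+\infty$, e.g.\ $T_0=[D_0]$). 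The extra $+\f_t$ on the right-hand side of $(NCMAF)$ is exactly the obstruction, and this is where the paper does its real work: it solves the Aubin-type equation $(\om+dd^c u)^n=e^{2u-2\f_0}\om^n$ --- possible by Kolodziej precisely because $c(T_0)>1$, i.e.\ $e^{-2\f_0}\in L^p$ for some $p>1$ --- and interpolates with weights $e^{-t}$, $1-e^{-t}$, so that the singular terms cancel up to $2(1-e^{-t})(u-\f_0)$, which is bounded below; and in the borderline case $T_0=[D_0]$, where $e^{-2\f_0}\notin L^1$ and this Kolodziej step is unavailable, it replaces it by the Tian--Yau/Poincar\'e-type construction with reference measure $\om^n/\bigl(|s_0|^2_h(\log|s_0|^2_h)^2\bigr)$ to produce a usable $u$ with zero Lelong numbers. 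So as written your argument does not establish that $\f_t\not\equiv-\infty$ for small $t$, nor that $\om_t\to T_0$ weakly, and the divisor case is not addressed at the one point where it is genuinely delicate; note also that this is exactly where the hypotheses ``$c(T_0)>1$'' and ``$D_0$ smooth anticanonical'' enter the paper's proof, whereas in your write-up they are only used for the smoothing threshold. A possible alternative repair, which you do not carry out, would be to transfer the time-zero analysis from the unnormalized flow (where Lemma \ref{lem:minorationgenerale} does apply) via the rescaling $\om_t=e^t\Omega_{1-e^{-t}}$, keeping track of the potential normalizations. The final paragraph (restarting from the smooth form $\om_1$ and invoking Perelman/Tian--Zhu convergence when a K\"ahler--Einstein metric exists) matches the paper and is fine.
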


A similar result can be obtained for the twisted K\"ahler-Ricci flow, using the recent work of 
Collins-Szekelyhidi \cite{CSz12}.

\begin{proof}
Assume first that $T_0=\omega+dd^c \f_0$ with $c(\f_0)>1$.
Let $u$ be a continuous $\omega$-psh function such that 
$$
(\omega+dd^c u)^n =e^{2u-2 \f_0} \omega^n.
$$
That such a function exists follows from our assumption on the integrability index, together with Kolodziej's uniform estimate \cite{Kol98}. Observe that
$$
u_t=e^{-t} \f_0+(1-e^{-t}) u-Ct+n(t \, log t -t)
$$
is a family of $\omega$-psh functions which satisfy
$$
(\omega+dd^c u_t)^n \geq (1-e^{-t})^n (\omega+dd^c u)^n \geq \frac{ \min(t^n,1)}{2^n} 
e^{2u-2 \f_0} \omega^n,
$$
while
$$
e^{\dot{u}_t-u_t+h} \omega^n =t^n e^{(2u-2\f_0)e^{-t}-u+h-C} \leq (\omega+dd^c u_t)^n
$$
on $X \times [0,T]$, if $C>>1$ is large enough. 

It follows from the maximum principle and Proposition \ref{pro:majoNKRF} that $\f_t \geq u_t$ and
$$
e^{\dot{\f}_t-\f_t} \leq \exp \left( -\left[e^{-t}+\frac{1}{1-e^{-t}} \right]\f_0+C' \right) \in L^p,
$$
for some $p>1$ as soon as $t >-\ln \left(\frac{3-\sqrt{5}}{2} \right)$.

We now consider the case when $T_0=\omega+dd^c \f_0=[D_0]$ is the current of integration along a 
smooth anticanonical divisor. Note that $\f_0=\log |s_0|_h$, where
$s_0$ is a holomorphic section of $-K_X$ such that $D_0=(s_0=0)$ and $h$ is a smooth hermitian
metric with positive curvature form $\Theta_h=\omega$. In this case $c(T_0)=1$ but the Poincar\'e type
metric
$$
\mu:=e^{-2\f_0} \frac{\omega^n}{(\log|s_0|^2_h)^2}=\frac{\omega^n}{|s_0|^2_h (\log|s_0|^2_h)^2}
$$
has finite mass if $D_0$ is smooth or  not too singular (this is precisely what we need here). It follows from the work of Tian-Yau (see \cite{Auvray} for more recent information on this topic) that one can solve
the complex Monge-Amp\`ere equation
$$
(\omega+dd^c v)^n=c \, \mu,
$$
where $u$ is $\omega$-psh on $X$, smooth in $X \setminus D$, with mild singularities along $D$
(modelled on the potential $-\log (-\log |s_0|^2_h)$). In particular $v$ has zero Lelong number at all points
of $X$. This latter mild information can also be obtained as a particular case of the results developed in \cite{GZ07}. Observe that $u=v+C$ is then a solution of the complex Monge-Amp\`ere equation
$$
(\omega+dd^c u)^n =e^{2u-2 \f_0} \omega^n,
$$
for some appropriate choice of the normalizing constant $C \in \R$. We can from there proceed as in the case when $c(T_0)>1$.

\smallskip

The asymptotic behavior of $\omega_t$ as $t \rightarrow +\infty$, is the same as that of the 
normalized K\"ahler-Ricci flow with smooth initial value $\omega_1$, as follows from the semi-group property. It thus follows from a celebrated result of Perelman (see \cite{TZ07}) that 
when $t \rightarrow +\infty$,
$\omega_t$ converges to a K\"ahler-Einstein metric if there is one.
\end{proof}

\begin{rem}
Note that one can similarly continuously deform any positive current $T_0=\omega+dd^c \f_0$
with $c(\f_0)=1$, as long as $e^{-2\f_0}/[1+|\f_0|^N] \in L^1$. While it is now known that
$e^{-2\f_0}$ is not integrable at this critical exponent \cite{Bern13}, it is tempting to make the 
{\it closedness conjecture} that one can always reach integrability at the critical exponent by
adding by a polynomial factor.
\end{rem}

\begin{rem} \label{rem:heuristic}
Here follows a heuristic argument showing how the smoothing property of the K\"ahler-Ricci flow should help in analyzing the long term behavior of the normalized K\"ahler-Ricci flow on Fano varieties. 

Assume for simplicity that $X$ is a Fano manifold with no holomorphic vector field and pick
$\omega \in c_1(X)$ a K\"ahler form. Its Ricci curvature form decomposes as
$$
\Ric(\omega)=\omega+dd^c h,
$$
where we normalize the function $h$ (the Ricci deviation) so that $\int_X e^{h} \omega^n=1$.

Any other K\"ahler form in $c_1(X)$ writes
$\omega_\f=\omega+dd^c \f$, for some smooth $\omega$-psh function $\f$.
It follows from the work of Tian \cite{Tian97}  that $X$ admits a (unique) K\"ahler-Einstein 
$\omega_{KE}$ if and only if the functional
$$
{\mathcal F}: \omega_\f \mapsto E(\f)+\log \int_X e^{-\f+h} \omega^n
$$
is {\it proper} (it is then actually even {\it coercive} as was shown by Phong-Song-Sturm-Weinkove in
\cite{PSSW08}). Moreover $\omega_{KE}$ is then the unique maximizer of ${\mathcal F}$.
Here $E$ denotes the functional introduced earlier, namely
$$
E(\f)=\frac{1}{(n+1)V} \sum_{j=0}^n \int_X \f \, \omega_\f^j \wedge \omega^{n-j}.
$$

We now consider the normalized K\"ahler-Ricci flow $(\omega_t)_{t>0}$ starting from $\omega_0=\omega$ and the goal is to propose a scheme for the proof of Perelman's result that $\omega_t \rightarrow \omega_{KE}$ as $t \rightarrow +\infty$, assuming ${\mathcal F}$ is proper:
\begin{enumerate}
\item observe that ${\mathcal F}$ is non decreasing along the normalized K\"ahler-Ricci flow and even increasing unless we have reached $\omega_{KE}$ (entropy argument);
\item The properness assumption implies that the $\omega_t$'s stay in a  compact subset of the finite energy class ${\mathcal E}^1(c_1(X))$;
\item the functional ${\mathcal F}$ is contant on the set ${\mathcal C}$ of cluster values which consists of
finite energy currents;
\item one can run the normalized K\"ahler-Ricci flow starting from any $T_0 \in {\mathcal C}$, since 
${\mathcal F}$ is constant on ${\mathcal C}$, it follows from (1) that ${\mathcal C}=\{\omega_{KE}\}$.
\end{enumerate}
All these arguments however need delicate justifications, we refer the interested reader to \cite{BBEGZ} for 
more details.
\end{rem}

\subsection{The 2D-Ricci flow}

The smoothing properties of the Ricci flow in (real) dimension two has been intensively studied in the past twenty years, in connection with equations of porous medium type, notably
the logarithmic (fast) diffusion equation. There is no existence  nor uniqueness of solutions in general, if the initial measure has atoms (see \cite{Vaz06}).

If $X=\C\PP^1$ is the Riemann sphere and $\omega$ a Fubini-Study metric, we can write
$\omega_t=dd^c \p_t$ in a chart $\C \subset \C\PP^1$, with
$$
\frac{\partial \p_t}{\partial t}=\log \Delta_\omega \p_t
\; \; \text{ in } \C =\R^2.
$$
Setting $f_t=\Delta_\omega \p_t$, this transforms into
$$
\frac{\partial f_t}{\partial t} =\Delta_\omega \log f_t 
\; \; \text{ in } \; \; \C \times ]0,T]
$$
with initial data a positive Radon measure $T_0=dd^c \p_0 \geq 0$.
We refer the interested reader to \cite{GT11, R12,Top10, Top12} for recent works in this direction. It seems that our  results go beyond what was previously known  in this two-dimensional setting.

\section{The K\"ahler-Ricci flow on lt varieties} \label{sec:last}

In this final section we briefly explain how to generalize our previous results in two different directions.
We first show that one can even run the twisted K\"ahler-Ricci flow from a current representing a class that is merely nef. We then extend our previous analysis to the case of midly singular varieties.

\subsection{Starting from a nef class}

Our purpose here is to show that the twisted K\"ahler-Ricci flow can also be used to 
smooth out a positive current with zero Lelong numbers belonging to a nef class, as long
as the corresponding deformation at the level of cohomology immediately enters the K\"ahler cone.

\begin{thm} \label{thm:nef}
Assume that $\a_0$ is nef while $\{\eta\}-c_1(X)$ is a K\"ahler class.
Let $T_0 \in \a_0$ be a positive current with zero Lelong numbers. There exists a unique 
maximal family
$(\omega_t)_{0<t<T_{max}}$ of K\"ahler forms such that
$$
\frac{\partial \omega_t}{\partial t}=-Ric(\omega_t)+\eta,
$$
and $\omega_t$ weakly converges towards $T_0$, as $t \searrow 0^+$.
\end{thm}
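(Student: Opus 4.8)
The idea is to reduce Theorem \ref{thm:nef} to the already-established Theorem \ref{thm:main} (and Theorem A) by a limiting argument in which the nef class $\a_0$ is perturbed into the K\"ahler cone. First I would fix a reference K\"ahler form $\omega$ on $X$ and, since $\a_0$ is nef, note that for every $\e>0$ the class $\a_0+\e\{\omega\}$ is K\"ahler. The hypothesis that $\{\eta\}-c_1(X)$ is K\"ahler means that $t\mapsto tK_X+t\{\eta\}+\a_0+\e\{\omega\}$ is nef for $t$ in an interval whose right endpoint $T_{max}^\e$ satisfies $T_{max}^\e\to T_{max}$ as $\e\to 0$ (in fact $T_{max}=+\infty$ here, since $\{\eta\}-c_1(X)$ K\"ahler forces $tK_X+t\{\eta\}+\a_0 = t(\{\eta\}-c_1(X))+\a_0$ to stay K\"ahler for all $t>0$ once $\a_0$ is nef; but the argument works verbatim with a finite $T_{max}$). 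Writing $T_0=\omega_0+dd^c\f_0$ with $\omega_0\in\a_0$ a (possibly non-K\"ahler but semipositive) representative and $\f_0$ having zero Lelong numbers, I would consider for each $\e$ the initial current $T_0^\e:=T_0+\e\omega\in\a_0+\e\{\omega\}$, which is a genuine positive current with zero Lelong numbers in a K\"ahler class, and apply Theorem A to obtain a maximal family $(\omega_t^\e)_{0<t<T_{max}^\e}$ of K\"ahler forms solving $\partial_t\omega_t^\e=-\Ric(\omega_t^\e)+\eta$ with $\omega_t^\e\to T_0^\e$ weakly as $t\searrow 0^+$.

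The second step is to pass to the limit $\e\to 0$. At the level of potentials, $\omega_t^\e=\theta_t^\e+dd^c\f_t^\e$ where $\theta_t^\e:=\omega+\e\omega+t\chi$ and $\f_t^\e$ solves a $(CMAF)$ with initial data $\f_0$ (independent of $\e$) and reference forms $\theta_t^\e$ decreasing to $\theta_t:=\omega+t\chi$ as $\e\searrow 0$. The comparison principle of Corollary \ref{cor:envelope}, applied to the flows with reference forms $\theta_t^\e\le\theta_t^{\e'}$ for $\e\le\e'$, shows that $\e\mapsto\f_t^\e(x)$ is monotone; combined with the uniform upper bound of Lemma \ref{lem:majosup} and the uniform lower bound of Lemma \ref{lem:minorationgenerale} (both of which depend only on $\sup_X\f_0$, $\inf_X h$ and the integrability exponent of $\f_0$, hence are $\e$-uniform for $\e$ in a bounded range), one gets that $\f_t:=\lim_{\e\to 0}\f_t^\e$ is a well-defined, non-identically-$-\infty$ family of $\theta_t$-psh functions. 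The key point is that the interior a priori estimates of Section 4 — Proposition \ref{pro:clef}, Theorem \ref{thm:oscillation} (uniform Kolodziej bound on the oscillation), Corollary \ref{cor:minodot}, and finally Theorem \ref{thm:higherorder} — all have constants depending only on $\sup_X\f_0$, $\|h\|_\infty$ and $c(\f_0)$, and on $0<\e_0<\e<\e_1$ only through the bounds $\frac{\omega}{2}\le\theta_t^\e\le 2\omega$ on compact time intervals. Hence the family $\{\f_t^\e\}$ is bounded in $\cC^k(X\times[\delta,T])$ for every $k,\delta,T$, uniformly in $\e$, so $\f_t^\e\to\f_t$ in $\cC^\infty_{loc}(X\times(0,T_{max}))$ and the limit $\f_t$ is smooth and satisfies the limiting $(CMAF)$ with reference $\theta_t$; correspondingly $\omega_t:=\theta_t+dd^c\f_t$ is a family of K\"ahler forms on $(0,T_{max})$ solving $\partial_t\omega_t=-\Ric(\omega_t)+\eta$.

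The third step is continuity at $t=0$ and maximality/uniqueness. For continuity, any cluster value $\p=\lim\f_{t_k}$ satisfies $\p\le\f_0$ by the same argument as in the proof of Theorem \ref{thm:main} (each $\f_{t,j}^\e$ is continuous at $t=0$ with value $\f_{0,j}\searrow\f_0$, and these dominate $\f_t^\e\ge\f_t$); conversely the lower bound of Lemma \ref{lem:minorationgenerale}, which passes to the $\e$-limit since its right-hand side $(1-2\beta t)\f_0+\a tu+n(t\log t-t)$ does not involve $\e$ (the auxiliary $u$ can be chosen with respect to $\omega$, not $\omega+\e\omega$, after harmlessly enlarging constants), gives $\f_0\le\liminf_{t\to 0}\f_t$, whence $\p\equiv\f_0$ and $\omega_t\to T_0$ weakly. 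Uniqueness in the maximally-stretched sense is inherited from Theorem \ref{thm:main}: given another solution $(\p_t)$ with $\p_t\to\f_0$ in $L^1$, for each $\e$ the functions $\p_t$ form a subsolution of the $\e$-flow (since $\theta_t\le\theta_t^\e$ forces $\log\big((\theta_t^\e+dd^c\p_t)^n/\mu\big)\ge\log\big((\theta_t+dd^c\p_t)^n/\mu\big)=\dot\p_t$), so $\p_t\le\f_{t,j}^\e$ for all $j$ by the Hartogs-lemma argument of Section 5.3, and letting $j\to\infty$ then $\e\to 0$ gives $\p_t\le\f_t$. \emph{The main obstacle} I expect is verifying that the a priori estimates of Sections 2–4 are genuinely uniform as $\e\to 0$ — in particular that the Kolodziej-type oscillation bound (Theorem \ref{thm:oscillation}) and the Evans–Krylov/Schauder constants do not degenerate — and that the limiting reference form $\theta_t=\omega+t\chi$ is still $\ge\omega/2$ on compact time intervals, i.e.\ that shrinking the K\"ahler class to the nef boundary does not interact badly with the $t$-dependent twist; once these uniformities are in place the rest is a routine repetition of Section 5.
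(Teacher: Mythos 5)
Your overall strategy coincides with the paper's: perturb the nef class to the K\"ahler class $\a_0+\e\{\omega\}$, run the flow from $T_0+\e\omega$ with the same potential $\f_0$, use the comparison principle to get monotonicity in $\e$, keep the decreasing limit from being $-\infty$ by an explicit subsolution built from an auxiliary Monge--Amp\`ere solution, pass to the limit with interior estimates, and handle continuity at $t=0$ and maximal stretching as in Section 5. Two points, however, are genuine gaps rather than routine verifications. First, your premise that one can choose a smooth \emph{semipositive} representative $\omega_0\in\a_0$ is false for a general nef class; the paper works with an arbitrary smooth representative $\theta$ (explicitly noting it need not be semipositive) and, crucially, takes the perturbation/twist direction to be a K\"ahler form $\omega\in\{\eta\}-c_1(X)$, so the reference path is $\theta+(t+\e)\omega$ and the auxiliary function can be taken to solve $(\theta+\omega+dd^c u)^n=e^{u-\f_0}e^h\omega^n$ with respect to the fixed K\"ahler form $\theta+\omega$, giving the $\e$-independent lower bound $\f_t\ge (1-t)\f_0+tu+n(t\log t-t)$. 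Invoking Lemma \ref{lem:minorationgenerale} ``after harmlessly enlarging constants'' does not achieve this: that lemma is proved for a K\"ahler reference form, and Kolodziej-type constants for the classes $\a_0+\e\{\omega\}$ degenerate as $\e\to0$.

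Second, and more seriously, your uniformity claim that the estimates of Sections 2--4 depend on $\e$ only through $\frac{\omega}{2}\le\theta_t^\e\le 2\omega$ on compact time intervals rests on an inequality that fails in the nef setting: $\theta_t^\e$ is not bounded below by any fixed K\"ahler form near $t=0$ (there may be no positive representative of $\a_0$ at all), and with your possibly non-positive $\chi$ not even for $t$ bounded away from $0$ --- only the cohomology class is K\"ahler for $t+\e>0$, not the form. Hence Theorem \ref{thm:oscillation} (Kolodziej) and the subsequent Laplacian and Evans--Krylov/Schauder estimates cannot be quoted verbatim with $\e$-uniform constants; you flag this as ``the main obstacle'' but the hoped-for inequality is simply false, and no replacement mechanism is offered. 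The paper's fix is precisely the new content of the proof: for each $\delta>0$ choose a K\"ahler form $\omega_\delta$ representing $\a_0+\delta(\{\eta\}-c_1(X))$ and a smooth $h_\delta$ with $\theta+\delta\omega=\omega_\delta+dd^c h_\delta$; for $t\ge\delta$ rewrite the flow as $(\omega_\delta+(t-\delta)\omega+dd^c(\f_t+h_\delta))^n=F_t\,\omega_\delta^n$, where the upper bound $\dot{\f}_t\le(-\f_0+C)/t+C$ makes $F_t$ uniformly $L^2$, and apply Kolodziej to $\f_t+h_\delta$ with the genuinely K\"ahler reference $\omega_\delta+(t-\delta)\omega\ge\omega_\delta$; the higher-order estimates then run as in the K\"ahler case with $\omega_\delta$ in place of $\omega/2$, with constants depending on $\delta$ (harmless, since smoothness is only claimed for $t>0$). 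With this substitution your plan becomes the paper's proof.
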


The proof follows exactly the same lines as that of our previous results, so we only emphasize the main differences.

\begin{proof}
The problem can again be  rewritten at the level of potentials. Fix $\theta$ a smooth differential closed
 form representing $\a_0$ (note that $\theta$ is not necessarily semi-positive)
and let $\f_0 \in PSH(X,\theta)$ be a $\theta$-psh potential for $T_0$, i.e.
$T_0=\theta+dd^c \f_0$.
Fix $\omega \in \{\eta\}-c_1(X)$ a K\"ahler form and write $\omega_t=\theta+t \omega+dd^c \f_t$ so
that 
$$
\frac{\partial \omega_t}{\partial t}=-Ric(\omega_t)+\eta,
$$
is equivalent to
$$
 \frac{\partial \f_t}{\partial t}=\log \left[ \frac{(\theta+t \omega+dd^c \f_t)^n}{e^h \omega^n} \right]
$$
for some smooth real valued function $h$.

We already now how to solve the corresponding problem when 
$\a_0$ is a K\"ahler class, so it is natural to
approximate by replacing $\a_0$ with $\a_0+\e \{\omega\}$, $\e>0$.
We let $\f_{t,\e}$ denote the solution of the corresponding scalar parabolic flow, with fixed initial
data $\f_0$ (which is a $(\theta+\e \omega)$-psh function with zero Lelong numbers).

Our first observation is that $\e \mapsto \f_{t,\e}$ is non-increasing.  This follows  from the maximum principle, since for $0<\e<\e'$, the function $x \mapsto \f_{\e,t}(x)$ is both
$(\theta+(\e+t) \omega)$-psh and $(\theta+(\e'+t) \omega)$-psh, and satisfies
$$
(\theta+(\e+t) \omega+dd^c \f_{\e,t})^n=e^{\dot{\f}_{\e,t}+h} \omega^n
\leq (\theta+(\e'+t) \omega+dd^c \f_{\e,t})^n,
$$
thus $\f_{\e,t}$ is a subsolution of the parabolic Cauchy problem solved by $\f_{\e',t}$.

We can thus consider
$$
\f_t:=\lim_{\e \rightarrow 0^+} \searrow \f_{\e,t}
$$
and we now need to show that $\f_t$ is bounded from below (so as to guarantee in particular 
that $\f_t \neq -\infty$).
We can assume without loss of generality that $\theta+\omega$ is a K\"ahler form (up to rescaling 
$\omega$). Let $u$ be a continuous $(\theta+\omega)$-psh function such that
$$
(\theta+\omega+dd^c u)^n =e^{u- \f_0} e^h \omega^n,
$$
and consider $u_t=(1-t)\f_0+t u +n (t \log t -t)$. The latter is a family of 
$(\theta+t \omega)$-psh functions such that
$$
(\theta+t\omega+dd^c u_t)^n \geq t^n (\theta+\omega+dd^c u)^n 
=e^{\dot{u}_t+h} \omega^n
$$
with $u_0=\f_0$. It therefore follows from the maximum principle that
$\f_t \geq u_t$ which yields the desired lower bound.

We now briefly indicate how to deal with the key upper bound on $\dot{\f}_t$. 
For $\delta>0$ we fix $\omega_\delta$ a K\"ahler form representing the K\"ahler class
$\a_0+\delta \{\eta\}-\delta c_1(X)$ and let $h_\delta$ be a smooth function such that
$$
\theta+\delta \omega=\omega_\delta+dd^c h_\delta.
$$
Observe now that the parabolic equation can be rewritten, for $t \geq \delta$, as
$$
(\omega_\delta+(t-\delta)\omega+dd^c (\f_t+h_\delta))^n=F_t \omega_\delta^n
$$
where the densities
$$
F_t=e^{\dot{\f}_t+h} \frac{\omega^n}{(\omega_\delta+(t-\delta) \omega)^n}
$$
are uniformly in $L^2$, since (as we have shown earlier)
$$
\dot{\f}_t \leq -\frac{\f_0(x)+C}{t}+C.
$$
It therefore follows from Kolodziej's estimate \cite{Kol98} that $\f_t+h_\delta$ is uniformly bounded
for $t \geq \delta$, hence so is $\f_t$.

The remaining estimates are identical to the ones previously established when $\a_0$ is K\"ahler
as the reader is invited to check.
\end{proof}

\begin{rem}
Recall \cite{BEGZ} that a  nef class does not necessarily contain a positive current with zero Lelong numbers.
One can however slowly attenuate the logarithmic singularities along the lines of the results from section \ref{sec:concluding}.
\end{rem}

\subsection{Deforming along  big and semi-positive classes}

\subsubsection{Continuous initial data}
Recall that a $(1,1)$-class $\a \in H^{1,1}(X,\R)$ is \emph{big} iff it can be represented by a 
\emph{K\"ahler current}, \ie a closed $(1,1)$-current $T$ such that
$T\ge c\om$ for some $c>0$.

\begin{lem} \label{lem:big}
if $\theta$ is a closed real $(1,1)$-form on $X$ whose cohomology class in $H^{1,1}(X,\R)$ is big
then there exists a $\theta$-psh function $\psi_\theta\le 0$ such that:
\begin{itemize}
\item[(i)] $\psi_\theta$ is of class $C^\infty$ on a Zariski open set $\Omega\subset X$, 
\item[(ii)] $\psi_\theta\to-\infty$ near $\partial\Omega$, 
\item[(iii)] $\om_\Omega:=\left(\theta+dd^c\p_\theta\right)|_\Omega$ is the restriction to $\Omega$ of a K\"ahler form on a compactification $\tX$ of $\Omega$ dominating $X$. 
\end{itemize}
\end{lem}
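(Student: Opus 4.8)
The plan is to realize $\{\theta\}$, after pulling it back to a suitable modification $\pi\colon\tX\to X$, as the sum of a K\"ahler class and an effective $\mathbb{R}$-divisor, and then to read off $\psi_\theta$ from the corresponding potential.

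First I would use that a big class contains a K\"ahler current (Demailly--Paun; see \cite{BEGZ}) and apply Demailly's regularization theorem \cite{Dem92} to produce a K\"ahler current $T=\theta+dd^c\varphi\ge\delta\omega$, with $\delta>0$ and $\omega$ a fixed K\"ahler form, whose potential $\varphi\le 0$ has \emph{analytic singularities} along a coherent ideal sheaf $\mathcal I\subset\mathcal O_X$, i.e.\ locally $\varphi=\tfrac c2\log\bigl(\textstyle\sum_j|f_j|^2\bigr)+C^\infty$ with $c>0$ and $(f_j)$ generating a power of $\mathcal I$. By Hironaka I would then take a log resolution $\pi\colon\tX\to X$ of $\mathcal I$: a composition of blow-ups with smooth centres contained in $V(\mathcal I)$, so that $\tX$ is again compact K\"ahler, $\pi$ is a biholomorphism over the Zariski open set $X\setminus V(\mathcal I)$, $\pi^{-1}\mathcal I\cdot\mathcal O_{\tX}=\mathcal O_{\tX}(-D)$ for an effective divisor $D$ with simple normal crossing support, and $\operatorname{Exc}(\pi)\subset\operatorname{supp}(D)=\pi^{-1}(V(\mathcal I))$.

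Choosing a smooth Hermitian metric $h_D$ on $\mathcal O_{\tX}(D)$, with curvature $\Theta_{h_D}$ and canonical section $s_D$, the Poincar\'e--Lelong formula gives $\pi^*\varphi=c\log|s_D|_{h_D}+\phi$ with $\phi\in C^\infty(\tX)$, whence
$$
\pi^*T=\bigl(\pi^*\theta-c\,\Theta_{h_D}+dd^c\phi\bigr)+c\,[D]=:\gamma+c\,[D],
$$
where $\gamma$ is a smooth closed $(1,1)$-form with $\gamma\ge\tfrac\delta2\pi^*\omega\ge 0$ (this holds on the dense open set where $D$ is trivial, hence everywhere by continuity), so that $\{\gamma\}=\pi^*\{\theta\}-c\{D\}$ is nef and big. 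The main obstacle is to upgrade this positive part into an honest K\"ahler form: in general $\gamma$ is only semipositive and may degenerate along $\operatorname{Exc}(\pi)$, and what one really needs is a modification $\pi$ together with a decomposition $\pi^*\{\theta\}=\{\beta\}+\{E\}$ with $\beta$ \emph{K\"ahler}, $E\ge 0$ an effective $\mathbb{R}$-divisor with simple normal crossing support, and $\operatorname{Exc}(\pi)\subset\operatorname{supp}(E)$. This is precisely the analysis of the \emph{ample locus} $\Amp\{\theta\}$ of a big class: one must choose the K\"ahler current $T$ with minimal singular locus (equal to $X\setminus\Amp\{\theta\}$) and invoke the structure theory of big cohomology classes, for which I would cite \cite{BEGZ} (argued by hand it rests on a further application of Demailly regularization to the nef and big remainder $\{\gamma\}$ together with the minimality of $\operatorname{Sing}(T)$).

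Granting such a decomposition $\pi^*\{\theta\}=\{\beta\}+\{E\}$, the rest is routine. Fixing a smooth metric $h_E$ on $\mathcal O_{\tX}(E)$ with curvature $\Theta_{h_E}$ and section $s_E$, and writing $\pi^*\theta=\beta+\Theta_{h_E}+dd^c g$ with $g\in C^\infty(\tX)$ (possible since the two sides share a cohomology class), I would set $v:=\log|s_E|^2_{h_E}-g+\mathrm{const}$, normalised so that $v\le 0$. Then $\pi^*\theta+dd^c v=\beta+[E]\ge\beta>0$, so $v$ is $\pi^*\theta$-plurisubharmonic on $\tX$, of class $C^\infty$ off $\operatorname{supp}(E)$ and identically $-\infty$ on $\operatorname{supp}(E)$. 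Since $\pi$ is a bimeromorphic morphism onto the smooth manifold $X$, it is an isomorphism in codimension one, so $\pi(\operatorname{Exc}(\pi))$ has codimension $\ge 2$; hence $v$ descends over $X\setminus\pi(\operatorname{Exc}(\pi))$ to a $\theta$-plurisubharmonic function, which extends across that set to a $\theta$-plurisubharmonic function $\psi_\theta\le 0$ on $X$. Putting $\Omega:=X\setminus\pi(\operatorname{supp}(E))$ — a Zariski open set over which $\pi$ is an isomorphism and $E$ is trivial — one checks (i)--(iii) directly: $\psi_\theta=v\circ(\pi|_{\pi^{-1}(\Omega)})^{-1}$ is smooth on $\Omega$; if $x_k\in\Omega$ tends to $\partial\Omega$ then $\pi^{-1}(x_k)$ accumulates on $\operatorname{supp}(E)$ (using $\operatorname{Exc}(\pi)\subset\operatorname{supp}(E)$), so upper semicontinuity of $v$ forces $\psi_\theta(x_k)\to-\infty$; and, under the open embedding $\Omega\cong\pi^{-1}(\Omega)\hookrightarrow\tX$, one has $(\theta+dd^c\psi_\theta)|_\Omega=(\pi^*\theta+dd^c v)|_{\pi^{-1}(\Omega)}=\beta|_{\pi^{-1}(\Omega)}$, exhibiting $\omega_\Omega$ as the restriction to $\Omega$ of the K\"ahler form $\beta$ on the K\"ahler compactification $\tX$ of $\Omega$, which dominates $X$ via $\pi$.
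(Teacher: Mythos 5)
Your argument is correct and is essentially the paper's: the paper disposes of the lemma by citing Demailly's regularization theorem \cite{Dem92} (with \cite{Bou04} invoked just after for the ample locus), and your proof is the standard unwinding of that citation — regularize to a K\"ahler current with analytic singularities, pass to a log resolution, absorb a small effective exceptional $\R$-divisor so that the residual class becomes K\"ahler, and descend/extend the resulting potential across the codimension-$\ge 2$ set $\pi(\operatorname{Exc}(\pi))$, which indeed yields (i)--(iii) with $\omega_\Omega$ extending to the K\"ahler form $\beta$ on $\tX$. One minor quibble: for the lemma as stated no minimality of $\operatorname{Sing}(T)$ is needed (that is only relevant if one wants $\Omega=\Amp(\theta)$, which the statement does not require), and the ``by hand'' version of your outsourced decomposition $\pi^*\{\theta\}=\{\beta\}+\{E\}$ is not a second application of Demailly regularization but the elementary fact that on a composition of blow-ups with smooth centres the pullback of a K\"ahler class minus a sufficiently small effective exceptional divisor class is again K\"ahler.
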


This follows from Demailly's regularization theorem \cite{Dem92}.
Condition (iii) means that there exists a compact K\"ahler manifold $(\tX,\om_{\tX})$ and a modification 
$\pi:\tX\to X$ s.t. $\pi$ is an isomorphism over $\Omega$ and $\pi^*\om_\Omega=\om_{\tX}$ on $\pi^{-1}(\Omega)$. 

By the Noetherian property of closed analytic subsets, the set of all Zariski open subsets $\Omega$ so obtained admits a largest element, called the \emph{ample locus} of $\theta$ and denoted by $\Amp(\theta)$ (see \cite[Theorem 3.17]{Bou04}). Note that $\Amp(\theta)$ only depends on the cohomology class of $\theta$. 

\medskip

 Our starting point is the following result proved in \cite{BG13}, which is a mild generalization of the technical heart of \cite{ST}: 

\begin{thm}\label{thm:ST} 
Let $X$ be a compact K\"ahler manifold, $T\in(0,+\infty)$, and let $(\theta_t)_{t\in[0,T]}$ be a smooth path of closed semipositive $(1,1)$-forms such that $\theta_t\ge\theta$ for a fixed semipositive $(1,1)$-form $\theta$ with big cohomology class. Consider
$$
\mu=e^{\psi^+-\psi^-}\om_X^n
$$
a positive measure on $X$, where
\begin{itemize}
\item $\p^\pm$ are quasi-psh functions on $X$;
\item $e^{-\p^-}\in L^p$ for some $p>1$;
\item $\p^\pm$ are smooth on a given Zariski open subset $U\subset\Amp(\theta)$.
\end{itemize} 
For each continuous $\theta_0$-psh function $\f_0\in C^0(X)\cap\psh(X,\theta_0)$,  there exists a unique bounded continuous function
$\f\in C^0_b\left(U\times[0,T)\right)$ with $\f|_{U\times\{0\}}=\f_0$ and such that on $U\times(0,T)$ $\f$ is smooth and satisfies
\begin{equation}\label{equ:paramain}
\frac{\partial\f}{\partial t}=\log\left[\frac{\left(\theta_t+dd^c\f\right)^n}{\mu}\right]. 
\end{equation}
Furthermore, $\f$ is in fact smooth up to time $T$, \ie $\f\in C^\infty\left(U\times(0,T]\right)$. 
\end{thm}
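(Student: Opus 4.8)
The plan is to reduce the statement of Theorem~\ref{thm:ST} to a Dirichlet-type parabolic problem on a compactification and then invoke the scalar flow theory already developed in \cite{ST} and \cite{BG13}. First I would use Lemma~\ref{lem:big}: since $\theta$ is semipositive with big cohomology class, fix $\psi_\theta\le 0$ a $\theta$-psh function, smooth on $\Amp(\theta)$, tending to $-\infty$ at its boundary, and such that $\om_\Omega=(\theta+dd^c\psi_\theta)|_\Omega$ extends to a K\"ahler form $\om_{\tX}$ on a modification $\pi:\tX\to X$. Pulling everything back by $\pi$ we may work on $\tX$, where $\pi^*\theta_t+(1-\varepsilon)dd^c(\text{something})$ becomes genuinely K\"ahler on $\Amp(\theta)$, but degenerates near the exceptional locus; the function $\psi_\theta$ provides the needed comparison barrier.

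Next I would set up the approximation scheme. Approximate the (possibly unbounded) coefficients: replace $\mu$ by $\mu_k$ with $\p^\pm$ regularized, and more importantly, work on shrinking sublevel sets $U_k=\{\psi_\theta>-k\}\cap U$ (or, following \cite{ST}, add $\delta\,\pi^*\om_X$ to make the reference form K\"ahler on all of $\tX$). On each such domain one has a uniformly parabolic complex Monge-Amp\`ere equation with smooth data, so short-time existence and the higher-order estimates of the Evans--Krylov--Schauder machinery (as in Section~4 and \cite[Theorem~3.1.4]{BG13}) apply. The key a priori estimates are: (a) a uniform $C^0$ bound via the maximum principle (Proposition~\ref{prop:max}) using $\f_0\in C^0$ and Kolodziej-type estimates coming from $e^{-\p^-}\in L^p$; (b) an upper bound on $\dot\f$ of the form $\dot\f_t\le (-\f_0+C)/t + C$ exactly as in Proposition~\ref{pro:clef}, giving $e^{\dot\f_t}\in L^{1+\e}$; (c) a lower bound on $\dot\f$ as in Proposition~\ref{pro:STbelow}, valid on $U$ where the data are smooth; (d) the Laplacian estimate of Lemma~\ref{pro:c2baby} on compact subsets of $U$. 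These combine to give interior $C^\infty_{\mathrm{loc}}(U\times(0,T])$ bounds.

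The main obstacle I expect is controlling the behavior near $\partial U$ (equivalently, near the non-ample locus of $\theta$) to get a genuinely bounded continuous solution on $U\times[0,T)$ rather than just an interior one, and to pin down uniqueness. For the upper barrier one uses $\f_t\le\sup_X\f_0+Ct$ from Lemma~\ref{lem:majosup}; for the lower barrier the point is to build a $\theta_t$-psh subsolution that stays bounded below and attains $\f_0$ at $t=0$, using $\psi_\theta$ and a solution $u$ of an auxiliary elliptic Monge-Amp\`ere equation with right-hand side $e^{u}\mu$, in the spirit of Lemma~\ref{lem:minorationgenerale}. Since $\p^\pm$ are smooth on $U$ and $e^{-\p^-}\in L^p$, Kolodziej's estimate gives such a continuous $u$, and $u_t=(1-\lambda t)\f_0+\lambda t\,u+n(t\log t-t)$ (for suitable $\lambda$) is the required subsolution; this forces $\f_t\ge u_t$, hence boundedness below and continuity down to $t=0$. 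Finally, uniqueness follows from the comparison principle (Corollary~\ref{cor:envelope}): any other bounded continuous solution $\psi$ with the same initial data satisfies $\psi\le\f$ and $\f\le\psi$ by symmetry, using Hartogs' lemma at $t=0$ exactly as in the uniqueness argument of Section~5. Smoothness up to $t=T$ is then a routine consequence of interior parabolic regularity applied on $U\times(0,T]$, since the estimates in (a)--(d) hold uniformly up to $T<T_{\max}$.
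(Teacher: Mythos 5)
You should first be aware that the paper contains no proof of Theorem \ref{thm:ST}: it is imported as a black box from \cite{BG13}, where it is described as a mild generalization of the technical heart of \cite{ST}. So there is no internal argument to compare yours against line by line; the closest internal analogue is the proof of Theorem \ref{thm:big}, which uses Theorem \ref{thm:ST} for the approximants and only redoes the estimates needed to pass to singular initial data. Your sketch does follow the correct skeleton of the \cite{ST}/\cite{BG13} argument: the barrier $\psi_\theta$ from Lemma \ref{lem:big}, Kolodziej-type $C^0$ bounds exploiting $e^{-\p^-}\in L^p$, the $t\dot{\f}_t$ upper bound, an elliptic auxiliary Monge--Amp\`ere subsolution of the form $(1-t)\f_0+tu+n(t\log t-t)$ (as in Lemma \ref{lem:minorationgenerale} and in the proof of Theorem \ref{thm:big}), interior Evans--Krylov--Schauder regularity on $U$, and a comparison principle localized by $\e\psi_\theta$ for uniqueness.

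The caveat is that the specific propositions of this paper you invoke for steps (b), (c), (d) are all proved under the standing normalization $\omega/2\le\theta_t\le 2\omega$, i.e.\ for genuinely K\"ahler reference forms, and transferring them to semipositive big $\theta_t$ is exactly the nontrivial content of \cite{BG13}, not a routine localization. Concretely: the lower bound of Proposition \ref{pro:STbelow} rests on $A\theta_t+\chi=A\theta_s\ge A\omega/2$, which has no analogue when $\theta_t\ge\theta$ is merely semipositive; the Laplacian estimate of Lemma \ref{pro:c2baby} uses $A\Delta_t\f_{t+s}\le An-\frac{A}{2}\operatorname{Tr}_{\omega_{t+s}}(\omega)$, again unavailable, and must be replaced by a quantity like $\f_t-\e\psi_\theta$ so that $\theta+dd^c\psi_\theta\ge$ (K\"ahler) supplies the trace term, at the price of bounds that blow up near $\partial U$; and every maximum-principle argument (including uniqueness of the bounded continuous solution on the noncompact set $U$) has to be forced to attain its extremum inside $U$ by the same $\e\psi_\theta$ trick, as the paper itself illustrates when it re-derives the $\dot{\f}_t$ upper bound in the proof of Theorem \ref{thm:big}. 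Your proposal gestures at these modifications ("$\psi_\theta$ provides the comparison barrier", "valid on $U$", "on compact subsets of $U$") but asserts them by analogy; as a roadmap it is faithful to the actual proof, but the degenerate-form versions of (b)--(d) are where the real work lies and would need to be carried out, not cited.
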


Our goal is now to allow more singular $\theta$-plurisubharmonic  initial data $\f_0$. For simplicity we only discuss the case of initial data with zero Lelong numbers and we only deal with initial cohomology classes
that are both big and semi-positive.

\subsubsection{General case}

We are going to  prove the following

\begin{thm} \label{thm:big}
Let $X$ be a compact K\"ahler manifold, $T\in(0,+\infty)$, and let $(\theta_t)_{t\in[0,T]}$ be an
affine path of closed semipositive $(1,1)$-forms such that $\theta_t\ge\theta$ for a fixed semipositive $(1,1)$-form $\theta$ with big cohomology class. Consider
$$
\mu=e^{\psi^+-\psi^-}\om^n
$$
a positive measure on $X$, where
\begin{itemize}
\item $\p^\pm$ are quasi-psh functions on $X$;
\item $e^{-\p^-}\in L^p$ for some $p>1$;
\item $\p^\pm$ are smooth on a given Zariski open subset $U\subset\Amp(\theta)$.
\end{itemize} 
For each $\theta_0$-psh function $\f_0\in \psh(X,\theta_0)$ with zero Lelong numbers,  
there exists a unique maximally stretched function
$\f \in C^\infty \left(U\times(0,T)\right)$ such that
\begin{itemize}
\item for all $t \in [0,T]$, $\f_t:=\f(t,\cdot)$ extends as a $\theta_t$-psh function on $X$;
\item  $\| \f_t-\f_0\|_{L^1} \longrightarrow 0$ as $t \rightarrow 0$;
\item $\frac{\partial\f}{\partial t}=\log\left[\frac{\left(\theta_t+dd^c\f\right)^n}{\mu}\right]
\text{ on } U\times(0,T). $
\end{itemize} 
\end{thm}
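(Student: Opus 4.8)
The plan is to run the proof of Theorem \ref{thm:main} on the ample locus $U$, with Theorem \ref{thm:ST} playing the role that the compact parabolic regularity theory played there. Fix $\theta$ big and semipositive with $\Amp(\theta)\supseteq U$. Using Demailly's regularization theorem \cite{Dem92}, approximate the initial potential $\f_0$ by a decreasing sequence $(\f_{0,j})$ of $\theta_0$-psh functions that are continuous on $X$, smooth on the open set where $\f_0$ is already smooth, and still have zero Lelong numbers. Applying Theorem \ref{thm:ST} to each $\f_{0,j}$ (the affine path $\theta_t$ and the measure $\mu$ satisfy its hypotheses) produces solutions $\f_j\in C^0_b(U\times[0,T))\cap C^\infty(U\times(0,T])$. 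The comparison principle encoded in that theorem shows that $j\mapsto\f_{t,j}(x)$ is non-increasing, so one sets $\f_t:=\lim_j\searrow\f_{t,j}$.

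Next I would push through the a priori estimates that survive $j\to\infty$. For the lower bound, imitate Lemma \ref{lem:minorationgenerale}: since $c(\f_0)=+\infty$ one may fix $0<\a\le 2\b$ small enough that $\psi_t:=(1-2\b t)\f_0+\a t u+n(t\log t-t)$ is $\theta_t$-psh, where $u$ is a bounded $\theta$-psh solution of $\a^n(\theta+dd^c u)^n=e^{\a u-2\b\f_0}\mu$ provided by Kolodziej's estimate in the big setting (as in \cite{BG13}); $\psi_t$ is then a subsolution, so $\f_{t,j}\ge\psi_t$. In particular $\f_t\not\equiv-\infty$, it extends as a $\theta_t$-psh function on $X$ (decreasing limit of such, bounded below), and $\liminf_{t\to 0}\f_t\ge\f_0$. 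For the upper bound, an affine super-solution gives $\sup_X\f_{t,j}\le\sup_X\f_0+Ct$ as in Lemma \ref{lem:majosup}, so the $\f_{t,j}$ are relatively compact in $L^1(X)$. Then Proposition \ref{pro:clef} yields $\dot\f_{t,j}\le(-\f_0+C)/t+C$; combined with the zero Lelong numbers and Skoda's theorem \cite{Sko}, the densities $(\theta_t+dd^c\f_{t,j})^n/\mu$ are locally uniformly in $L^{1+\e}$, so Kolodziej's uniform estimate controls the oscillation of $\f_{t,j}$ on compacts of $U$, and the complex parabolic Evans--Krylov and Schauder estimates (Theorem \ref{thm:higherorder}, in its $U$-version) give uniform $C^k_{loc}(U\times[\e,T))$ bounds. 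Passing to the limit, $\f\in C^\infty(U\times(0,T))$ solves the flow.

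For continuity at $t=0$ in $L^1$, the cluster-value argument of Theorem \ref{thm:main} applies verbatim: if $\psi$ is an $L^1$-cluster value of $(\f_t)$ as $t\to 0$, then since $\f_{t,j}\searrow\f_t$ and $\f_{\cdot,j}$ is continuous up to $t=0$ by Theorem \ref{thm:ST}, one gets $\psi\le\f_{0,j}$ for all $j$, hence $\psi\le\f_0$; together with $\liminf_{t\to 0}\f_t\ge\f_0$ this forces $\|\f_t-\f_0\|_{L^1}\to 0$. Finally, for the maximally stretched uniqueness, let $(\p_t)$ be another smooth solution on $U\times(0,T)$ whose slices extend to $\theta_t$-psh functions on $X$ with $\p_t\to\f_0$ in $L^1$. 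I would compare $\p_t$ with each $\f_{t,j}$ by applying the maximum principle to $\p_t-\f_{t,j}+\de\psi_\theta$, where $\psi_\theta\le 0$ is the barrier of Lemma \ref{lem:big} (smooth on $\Amp(\theta)$, $\to-\infty$ at the boundary): this localizes the supremum over $[\e,T']$ at $t=\e$, and then letting $\e\to 0$ (Hartogs, using continuity of $\f_{\e,j}$), then $\de\to 0$, then $j\to\infty$ gives $\p_t\le\f_t$.

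The main obstacle is precisely the loss of compactness of $X$: carrying the maximum principle and the Kolodziej and parabolic estimates on the ample locus $U$ rather than on $X$, i.e. controlling behaviour near $\partial U$. This is exactly what the barrier $\psi_\theta$ of Lemma \ref{lem:big} and the local estimates packaged in Theorem \ref{thm:ST} (from \cite{ST,BG13}) are designed for; the remaining effort is bookkeeping to check that the singular factor $e^{-\psi^-}$ in $\mu$ and the time-dependence of $\theta_t$ do not spoil the uniform $L^{1+\e}$ control of the densities, hence the uniform $L^\infty_{loc}$ and higher-order bounds.
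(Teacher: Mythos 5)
Your overall strategy coincides with the paper's: approximate the initial data by continuous $\theta_0$-psh functions, solve with Theorem \ref{thm:ST}, take the decreasing limit in $j$, produce a subsolution of the form $(1-t)\f_0+tu+n(t\log t-t)$ using the affine structure of $(\theta_t)$ and an elliptic Monge--Amp\`ere solution, obtain the key upper bound on $\dot{\f}_t$, and conclude via the local Kolodziej and Evans--Krylov/Schauder estimates, the cluster-value argument at $t=0$, and a barrier maximum principle for the maximally stretched property. Two steps, however, are not justified as written.

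First, the approximation step: you invoke Demailly's regularization \cite{Dem92} to produce a decreasing sequence of continuous functions that are $\theta_0$-psh. When $\theta_0$ is merely semipositive (and big), Demailly's theorem only yields approximants that are $(\theta_0+\e_j\omega)$-psh for some loss of positivity $\e_j>0$; in general they are not $\theta_0$-psh, while Theorem \ref{thm:ST} requires continuous data in $C^0(X)\cap\psh(X,\theta_0)$ for the fixed path $(\theta_t)$. The paper instead quotes \cite{EGZ11} for the existence of a decreasing sequence of continuous $\theta_0$-psh approximants; with your route one would have to perturb the whole path to $\theta_t+\e_j\omega$ and handle an extra limit, which you do not discuss. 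Second, the bound $\dot{\f}_{t,j}\le(-\f_0+C)/t+C$ is quoted from Proposition \ref{pro:clef}, but that proposition is proved by a maximum principle on the compact manifold for solutions smooth on all of $X$; here the solutions are smooth only on $U$, so the computation must be redone with the barrier of Lemma \ref{lem:big} inserted, namely $H=t\dot{\f}_t-[\f_t-(1-\e)\f_0-\e\p_\theta]-nt$, for which $(\frac{\partial}{\partial t}-\Delta_t)H\le 0$ on $U$ and $\p_\theta\to-\infty$ near $\partial U$ force the maximum onto $\{t=0\}$ before letting $\e\to 0$; this is precisely the paper's main new computation in the big setting. You acknowledge the issue in your closing paragraph and use the barrier in the uniqueness step, but at the point where the estimate is actually needed the argument rests on a citation that does not apply verbatim.
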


\begin{proof}
It follows from \cite{EGZ11} that one can find a sequence of continuous 
$\theta_0$-psh functions $\f_{0,j}$ decreasing towards $\f_0$. Let $\f_{t,j}$ denote the 
corresponding approximating solutions provided by Theorem \ref{thm:ST}. It follows again from the 
maximum principle that $j \mapsto \f_{t,j}$ is non increasing, hence we consider
$$
\f_t:=\lim_{ j \rightarrow +\infty} \searrow \f_{t,j}.
$$

The goal is thus to establish a priori estimates. For simplicity we assume $U=\Omega=\Amp(\theta)$ 
(the general case necessitates minor adjustments, see for example \cite[Lemma 3.3.2]{BG13}).
We work on the approximants $\f_{t,j}$, but we suppress the subscript $j$ in the sequel.

We first find an appropriate subsolution. This will guarantee that $\f_t$ is not identically $-\infty$, hence is a well defined $\theta_t$-psh function. The proof is completely similar to the K\"ahler case:  let
$u$ denote a $\theta$-psh function with minimal singularities such that
$$
(\theta+dd^c u)^n=e^{u-\f_0} \mu=e^{u-\f_0+\p^+-\p^-} \omega^n
\; \; \; \text{ in } \; U
$$
(the existence of $u$ follows from \cite{BEGZ}) and consider
$$
u_t:=(1-t) \f_0+t u + n (t \log t -t).
$$
Using that $t \mapsto \theta_t=\theta_0+t \chi$ is affine and $\theta_t \geq \theta$, we observe that 
$$
\theta_t+dd^c u_t=(1-t) (\theta_0+dd^c \f_0)+t(\theta_1+dd^c u) 
\geq t \theta_u \geq 0,
$$
thus $u_t$ is $\theta_t$-psh and 
the reader can check that $u_t$ is a subsolution to the parabolic equation with initial data
$u_0=\f_0$, hence
$$
\f_t \geq (1-t) \f_0+t u + n (t \log t -t),
$$
as desired.

We now establish the key upper-bound on $\dot{\f}_t$ in the context of big cohomology classes.
Fix $\e>0$ and consider
$$
H(t,x)=t \dot{\f}_t-[\f_t-(1-\e)\f_0-\e\p_\theta]-nt,
$$
where $\p_\theta$ is a $\theta$-psh function provided by Lemma \ref{lem:big}.
Since $\dot{\theta}_t=\chi$, a
straightforward computation yields, setting $\omega_t=\theta_t+dd^c \f_t$
and $\Delta_t:=\Delta_{\omega_t}$,
\begin{eqnarray*}
\left(\frac{\partial}{\partial t}-\Delta_t \right)H&=&
t \tr_{\omega_t}(\chi)+\Delta_t(\f_t-(1-\e)\f_0-\e \p_\theta)-n  \\
&=&  -\tr_{\omega_t}((1-\e) T_0+\e (\theta+dd^c \p_\theta) \leq 0,
\end{eqnarray*}
where these estimates are performed in $U=\Omega$.
Since $\p_\theta \rightarrow -\infty$ on $\partial \Omega$, 
It follows that $H$ attains its maximum in $(t=0) \cap \Omega$. Now $H(0,\cdot) \equiv 0$ hence
letting $\e \rightarrow 0$, we obtain
$$
\dot{\f}_t(x) \leq \frac{-\f_0(x)+C}{t}+n.
$$
From there on, one can proceed as in the proof of \cite[Theorem 3.3.4]{BG13} to conclude.
\end{proof}

\subsection{Smoothing properties on midly singular varieties}

\subsubsection{Log terminal singularities}\label{sec:klt}

We assume in this section that $X$ is a \emph{$\Q$-Gorenstein} space, i.e. 
 it has normal singularities and  its canonical bundle $K_X$ exists as a $\Q$-line bundle
($\exists r\in\N$ and a line bundle $L$ on $X$ s.t. $L|_{X_\reg}=rK_{X_\reg}$).

  Choose a \emph{log resolution} of $X$, \ie a projective bimeromorphic morphism $\pi:X'\to X$ which is an isomorphism over $X_\reg$ and whose exceptional divisor $E=\sum_i E_i$ has simple normal crossings. There is a unique collection of rational numbers $a_i$ (the \emph{discrepancies} of $X$ w.r.t the chosen log resolution) s.t.
$$
K_{X'}\sim_\Q\pi^*K_X+\sum_i a_i E_i,
$$
where $\sim_\Q$ denotes $\Q$-linear equivalence. 

\begin{defi}
We say that $X$ has \emph{log terminal singularities} iff $\forall i, \; a_i>-1$. 
\end{defi}

This definition is independent of the choice of a log resolution.  
Quotient singularities are log terminal, and conversely every two-dimensional log terminal singularity is a quotient singularity. Note however that ordinary double points are log-terminal but not quotient singularities in dimension $n \geq 3$.

Choose a local generator $\sigma$ of the line bundle $rK_X$ for some $r\in\N^*$. Restricting to $X_\reg$, we define a smooth positive volume form by setting
\begin{equation}\label{equ:adapted}
\mu_\sigma:=\left(i^{r n^2}\sigma\wedge\bar\sigma\right)^{1/r}. 
\end{equation}
Such measures are called \emph{adapted measures} in \cite{EGZ09}, where the following analytic interpretation of the discrepancies is observed: 

\begin{lem}\label{lem:adapted} 
Let $z_i$ be a local equation of $E_i$, defined on a neighborhood $U\subset X'$ of a given point of $E$. Then we have
$$
\left(\pi^*\mu_\sigma\right)_{U\setminus E}=\prod_i |z_i|^{2a_i}dV
$$
for some smooth volume form $dV$ on $U$. 
\end{lem}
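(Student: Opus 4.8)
The plan is to reduce Lemma~\ref{lem:adapted} to a purely local computation on $X'$ near a point $p$ of the exceptional divisor $E$, and to carry it out by comparing the pullback $\pi^*\sigma$ with an explicit local generator of $rK_{X'}$ produced from the discrepancy divisor. Since $\pi$ is a biholomorphism over $X_\reg$, the form $\pi^*\mu_\sigma$ is smooth and positive on $X'\setminus E$, so the whole content of the statement lies in the behaviour near $E$: in suitable coordinates $\pi^*\mu_\sigma$ should be the product of the explicit weight $\prod_i|z_i|^{2a_i}$ with a form extending smoothly across $E$.

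First I would record that, $rK_X$ being an honest line bundle on $X$, the discrepancy relation $K_{X'}\sim_\Q\pi^*K_X+\sum_i a_iE_i$ multiplied by $r$ becomes an identity of \emph{Cartier} divisors
$$
rK_{X'}=\pi^*(rK_X)+\sum_i (ra_i)\,E_i,
$$
so in particular $ra_i\in\Z$ for all $i$. Now fix $p\in E$, shrink $U$ so that it meets only the components $E_1,\dots,E_k$ of $E$ passing through $p$, choose holomorphic coordinates $w=(w_1,\dots,w_n)$ on $U$ with $z_i=w_i$ a local equation of $E_i$ for $i\le k$, and take $e:=(dw_1\wedge\cdots\wedge dw_n)^{\otimes r}$ as a local generator of $rK_{X'}$ on $U$. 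The displayed divisorial identity gives, over $U$, an isomorphism of line bundles $\pi^*(rK_X)\cong rK_{X'}\otimes\cO_{X'}\big(-\sum_i(ra_i)E_i\big)$ compatible with the tautological one over $\pi^{-1}(X_\reg)$, under which $\big(\prod_i z_i^{ra_i}\big)\otimes e$ is a local generator of $\pi^*(rK_X)$. Since $\sigma$ generates $rK_X$ near $\pi(p)$, its pullback $\pi^*\sigma$ generates $\pi^*(rK_X)$ near $p$, hence
$$
\pi^*\sigma=g\cdot\Big(\prod_i z_i^{ra_i}\Big)\otimes e
$$
for a holomorphic nowhere-vanishing function $g$ on $U$.

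It then remains to substitute this into the definition $\mu_\sigma=\big(i^{rn^2}\sigma\wedge\bar\sigma\big)^{1/r}$. Using $i^{rn^2}=(i^{n^2})^r$ together with the multiplicativity of the $r$-th root on these $r$-th tensor powers, one finds on $U\setminus E$
$$
\pi^*\mu_\sigma=\Big(i^{rn^2}\,\pi^*\sigma\wedge\overline{\pi^*\sigma}\Big)^{1/r}
=|g|^{2/r}\,\prod_i|z_i|^{2a_i}\;i^{n^2}\,dw_1\wedge\cdots\wedge dw_n\wedge d\bar w_1\wedge\cdots\wedge d\bar w_n .
$$
As $i^{n^2}\,dw_1\wedge\cdots\wedge dw_n\wedge d\bar w_1\wedge\cdots\wedge d\bar w_n$ is, up to a positive constant, Euclidean volume in the $w$-coordinates and $|g|^{2/r}$ is smooth and strictly positive, the form $dV:=|g|^{2/r}\,i^{n^2}\,dw_1\wedge\cdots\wedge dw_n\wedge d\bar w_1\wedge\cdots\wedge d\bar w_n$ is a smooth volume form on $U$, and $\big(\pi^*\mu_\sigma\big)_{U\setminus E}=\prod_i|z_i|^{2a_i}\,dV$, as claimed.

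The step I expect to be the main obstacle is the second paragraph: one has to check carefully that the $\Q$-linear equivalence of canonical classes descends, after multiplication by $r$, to the asserted \emph{local} isomorphism of line bundles near points $p$ whose image $\pi(p)$ may lie in $X_\sing$, and that the comparison function $g$ is then genuinely holomorphic and invertible near $p$. This is exactly the point where normality of $X$ and the Cartier property of $rK_X$ enter, and it amounts to unwinding the definition of the discrepancies (cf.\ \cite{EGZ09}). The rest --- the bookkeeping of powers of $i$ and of the sign $(-1)^{n(n-1)/2}$ needed to see that $i^{n^2}\,dw\wedge d\bar w$ is a genuine positive volume form --- is routine.
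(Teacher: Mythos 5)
Your proof is correct and is essentially the argument the paper has in mind: the paper dismisses the lemma in one line as "a direct consequence of the change of variable formula", and your identity $\pi^*\sigma=g\,\bigl(\prod_i z_i^{ra_i}\bigr)\,(dw_1\wedge\cdots\wedge dw_n)^{\otimes r}$ with $g$ holomorphic and nowhere vanishing is precisely that change-of-variables statement, the weight $\prod_i|z_i|^{2a_i}$ recording the Jacobian's vanishing order given by the discrepancies. The extension point you flag is just the standard characterization $ra_i=\operatorname{ord}_{E_i}(\pi^*\sigma)$ for a local generator $\sigma$ of $rK_X$, so nothing essential is missing compared with the paper's (one-line) proof.
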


The proof is a direct consequence of the change of variable formula. 
Thus a $\Q$-Gorenstein variety $X$ has log terminal singularities iff every 
adapted measure $\mu_\sigma$ has locally finite mass near each singular point of $X$. 

The construction of adapted measures can be globalized as follows: let $\phi$ be a smooth metric on the $\Q$-line bundle $K_X$. Then 
\begin{equation}\label{equ:adapted2}
\mu_\phi:=\left(\frac{i^{r n^2}\sigma\wedge\bar\sigma}{|\sigma|_{r\phi}}\right)^{1/r}
\end{equation}
becomes independent of the choice of a local generator $\sigma$ of $rK_X$, and hence defines a smooth positive volume form on $X_\reg$, which has locally finite mass near points of $X_\mathrm{sing}$ iff $X$ is log terminal.

\subsubsection{The K\"ahler-Ricci flow on lt varieties}

Starting from a compact K\"ahler manifold $(X_0,\omega_0)$, it is tempting to run the K\"ahler-Ricci flow
until $T_{max}$ and expect that $(X,\omega_t)$ converges, as $t \rightarrow T_{max}$
 towards some mildly singular model $X$ equipped with a limiting current $T_0$ which is not too singular as well, and then try and run again the K\"ahler-Ricci flow from $(X,T_0)$.

When $X$ is projective, there are some evidence that this may be feasible, provided both by algebraic geometry (notably \cite{BCHM}) and by recent progresses in K\"ahler-Ricci flow techniques (see the survey \cite{SW13}).

\smallskip

The main application of our previous technical Theorem \ref{thm:big} is the following extension of a 
 result of Song-Tian \cite{ST}, which shows that one can indeed run the K\"ahler-Ricci flow on a log-terminal variety, starting from an arbitrary positive closed current with zero Lelong numbers:

\begin{thm} 
Let $X$ be a projective complex variety with log terminal singularities. Let $T_0$
be a positive $(1,1)$-current with zero Lelong numbers representing a K\"ahler class 
$\a_0 \in H^{1,1}(X,\R)$. Then there exists a 
continuous family $(\om_t)_{t\in[0,T_{max}[}$ of positive $(1,1)$-currents such that
\begin{itemize} 
\item[(i)] $[\om_t]=\a_0-tc_1(X)$ in $H^{1,1}(X,\R)$; 
\item[(ii)] $\omega_t \rightarrow T_0$ as $t \rightarrow 0$;
\item[(iii)] $(\om_t)_{t\in(0,+\infty)}$ restricts to a smooth path of K\"ahler forms on $X_\reg$ satisfying
$$
\frac{\partial\om_t}{\partial t}=-\Ric(\om_t). 
$$
\end{itemize}
\end{thm}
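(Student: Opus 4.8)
The plan is to deduce the statement from Theorem \ref{thm:big}, applied on a log resolution of $X$, the singularities of $X$ being absorbed into the density of an adapted measure. First I would reduce to a scalar flow on $X$ itself. Fix a smooth metric $\phi$ on the $\Q$-line bundle $K_X$, let $\mu_\phi$ be the associated adapted measure \eqref{equ:adapted2} --- a smooth positive volume form on $X_\reg$ --- and fix an \emph{affine} path $\theta_t$ of closed real $(1,1)$-forms representing $\alpha_0-t c_1(X)$ for $t\in[0,T]$, $T<T_{max}$, obtained as the linear interpolation between a K\"ahler form $\omega_0$ in $\alpha_0$ and a K\"ahler form in $\alpha_0-Tc_1(X)$ (each $\theta_t$ is then positive on $X$). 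Writing $\f_0\in\psh(X,\theta_0)$ for a potential of $T_0$ and $\om_t=\theta_t+dd^c\f_t$, the equation $\partial\om_t/\partial t=-\Ric(\om_t)$ on $X_\reg$ is, by the standard identity $\Ric(\om_t)=-dd^c\log(\om_t^n/\mu_\phi)+\Theta_\phi$ (where $\Theta_\phi$ is the curvature of $\phi$), equivalent to the scalar Monge--Amp\`ere flow
$$
\frac{\partial\f_t}{\partial t}=\log\left[\frac{(\theta_t+dd^c\f_t)^n}{\mu_\phi}\right]
$$
with $\f_t\to\f_0$ as $t\to 0$.

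Next I would pass to a log resolution $\pi\colon X'\to X$, an isomorphism over $X_\reg$, with exceptional divisor $E=\sum_iE_i$ of simple normal crossings and discrepancies $a_i>-1$ (log terminal). Then $X'$ is a compact K\"ahler manifold, $\pi^*\theta_t$ is an affine path of smooth semipositive forms with $\pi^*\theta_t\ge c\,\pi^*\omega_0=:\theta$ for a uniform $c>0$ (by compactness, the endpoints being positive on $X$), where $\theta$ has big cohomology class and $U:=\pi^{-1}(X_\reg)\subset\Amp(\theta)$. By Lemma \ref{lem:adapted}, locally on $X'$ one has $\pi^*\mu_\phi=\prod_i|z_i|^{2a_i}\,dV$ with $dV$ smooth, so globally $\pi^*\mu_\phi=e^{\p^+-\p^-}\om_{X'}^n$ with $\p^\pm$ quasi-psh on $X'$, smooth on $U$, the divisorial parts being $\sum_{a_i>0}a_iE_i$ and $-\sum_{a_i<0}a_iE_i$ respectively. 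Crucially $e^{-\p^-}=\prod_{a_i<0}|z_i|^{2a_i}\in L^p$ for some $p>1$, precisely because $-a_i<1$ for all $i$ --- this is where log terminality is used. Finally $\pi^*\f_0\in\psh(X',\pi^*\theta_0)$ still has zero Lelong numbers, Lelong numbers being controlled under bimeromorphic pullback.

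Now Theorem \ref{thm:big} applies on $X'$ to the data $(\pi^*\theta_t,\theta,\mu=\pi^*\mu_\phi,U,\pi^*\f_0)$ and produces, for each $T<T_{max}$, a maximally stretched solution $\f\in C^\infty(U\times(0,T))$, with $\f_t$ extending to a $\pi^*\theta_t$-psh function on $X'$, $\|\f_t-\pi^*\f_0\|_{L^1(X')}\to 0$ as $t\to0$, and solving the parabolic Monge--Amp\`ere equation with density $\pi^*\mu_\phi$ on $U\times(0,T)$; maximal stretching lets one glue these as $T\nearrow T_{max}$. Since $X$ is normal, $\f_t$ descends to a $\theta_t$-psh function $\hat\f_t$ on $X$ (it is $\pi^*\theta_t$-psh, hence restricts over $X_\reg$ to a $\theta_t$-psh function extending across the codimension $\ge2$ set $X_\mathrm{sing}$), and I set $\om_t:=\theta_t+dd^c\hat\f_t$. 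On $X_\reg\cong U$ this is a smooth path of K\"ahler forms solving $\partial\om_t/\partial t=-\Ric(\om_t)$ by the first step, and globally it is a positive closed $(1,1)$-current on $X$ in the class $\alpha_0-tc_1(X)$; this gives (i) and (iii). The $L^1$-convergence of potentials, together with the a priori estimates of Theorem \ref{thm:big} (local smooth convergence on $X_\reg$, $L^1$-compactness near $t=0$), yields the continuity of $t\mapsto\om_t$ and $\om_t\to T_0$ as $t\to0$, i.e.\ (ii).

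The main obstacle is the second step: verifying that $\pi^*\mu_\phi$ genuinely fits the hypotheses of Theorem \ref{thm:big}, i.e.\ that the negative part $\p^-$ of its logarithmic density is $p$-integrable for some $p>1$ --- which is exactly the analytic incarnation of log terminality via Lemma \ref{lem:adapted} --- and coping with the fact that the Zariski open set $U=\pi^{-1}(X_\reg)$ on which everything is smooth may be strictly smaller than $\Amp(\theta)$, so that the a priori estimates behind Theorem \ref{thm:big} must be run in the localized form alluded to there (cf.\ \cite[Lemma 3.3.2]{BG13}). By comparison, checking that the resulting currents descend to $X$ with the correct cohomology classes and depend continuously on $t$ up to $t=0$ (and across the gluing in $T$) is routine.
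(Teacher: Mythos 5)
Your proposal is correct and follows essentially the same route as the paper: pass to a log resolution, encode log terminality analytically via the adapted measure (Lemma \ref{lem:adapted}, giving $e^{-\p^-}\in L^p$ for some $p>1$), build an affine path of big and semipositive forms dominated below by a fixed big semipositive $\theta$ with $U=\pi^{-1}(X_\reg)\subset\Amp(\theta)$, and apply Theorem \ref{thm:big} before descending to $X$. The only cosmetic difference is the order of choices: the paper fixes $\chi=\dot\theta_t$ first and then picks the smooth metric $\phi$ on $K_X$ whose curvature is exactly $\pi_*\chi$, whereas you fix $\phi$ first, so you should adjust $\phi$ (or the measure) by a smooth function to make $\dot\theta_t$ equal the curvature of $\phi$ on the nose rather than merely in cohomology---a routine normalization of the potentials.
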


\begin{proof} 
Let $\pi:X' \rightarrow X$ be a log resolution of $X$. Since $\a_0'=\pi^*\a_0$ is the pull-back of a
K\"ahler class, it is big and semi-positive. We fix $\theta_0$ a smooth closed semi-positive
$(1,1)$-form representing it. Fix $0<T<T_{max}$ and let $\theta_T$ be a smooth closed semi-positive
$(1,1)$-form representing $\pi^*(\a_0-T c_1(X))$. Set
$$
\chi:=\frac{\theta_T-\theta_0}{T}
\; \; \text{ and } \; \; 
\theta_t:=\theta_0+t \chi.
$$
This yields an affine path of big and semi-positive forms. Note that both $\theta_0$ and $\theta_T$ can be chosen as the pull-back by $\pi$ of a K\"ahler form on $X$, hence we can moreover assume that there
is a fixed big and semi-positive form $\theta$ such that $\theta_0,\theta_T \geq \theta$
hence $\theta_t \geq \theta$ for all  $t \in [0,T]$.

Since $\chi$ is a representative of $\pi^*c_1(K_X)$, we can find a smooth metric $\phi$ of $K_X$
whose curvature form is $\pi_*\chi$. We let $\mu_\phi$ denote the corresponding adapted
measure and set
$$
\mu:=\pi^* \mu_\phi=e^{\p^+ - \p^-} \omega_{X'}^n,
$$
where $\omega_{X'}$ denotes a K\"ahler form on $X'$ and $\p^\pm$ are quasi-psh functions
which satisfy the hypotheses of Theorem \ref{thm:big}, as the reader will check 
using Lemma \ref{lem:adapted}  (the Zariski open set $U$ coincides here with $\pi^{-1}(X_{reg})$.
We set 
$$
\pi^* \omega_t=\theta_t+dd^c \f_t
$$
and observe that 
$$
\frac{\partial\om_t}{\partial t}=-\Ric(\om_t)
\Longleftrightarrow 
\frac{\partial\f}{\partial t}=\log\left[\frac{\left(\theta_t+dd^c\f\right)^n}{\mu}\right]
$$
if the potentials $\f_t$ are conveniently normalized. 
The result therefore follows from Theorem \ref{thm:big}.
\end{proof}

\end{document}